\renewcommand{\ldots}{\hspace{0.9pt}.\hspace{0.3pt}.\hspace{0.3pt}.\hspace{1.6pt}}
\renewcommand{\C}[1]{{\protect\mathcal{#1}}}
\renewcommand{\ge}{\geqslant}
\renewcommand{\le}{\leqslant}
\theoremstyle{plain}
\newtheorem{theorem}{Theorem}[section]
\newtheorem{lemma}[theorem]{Lemma}
\newtheorem{proposition}[theorem]{Proposition}
\newtheorem{claim}[theorem]{Claim}
\newcommand{\cqed}{\nolinebreak\mbox{\hspace{5 true pt}%
		\rule[-0.85 true pt]{2.0 true pt}{8.1 true pt}}}
\newcommand{\bcpf}{\smallskip\noindent{\it Proof of Claim.} }
\newcommand{\ecpf}{\cqed \medskip}
\theoremstyle{definition}
\newtheorem{remark}[theorem]{Remark}
\newcommand{\beq}[1]{\begin{equation}\label{#1}}
\newcommand{\eeq}{\end{equation}}
\newcommand{\hide}[1]{}
\newcommand{\B}[1]{{\mathbold #1}}
\newcommand{\I}[1]{{\mathbbm #1}}
\newcommand{\V}[1]{\mathbold{#1}}
\newcommand{\e}{\varepsilon}
\newcommand{\ga}[2]{(\gamma_{#1})_{#2}}
\newcommand{\Ad}[1]{\mathrm{Ad}(#1)}
\newcommand{\Laplace}{\mathop{\mathrm \Delta}}
\newcommand{\SO}{\mathrm{SO}}
\newcommand{\Zero}{{\mathbf 0}}
\newcommand{\Sd}{{\I S^{d-1}}}
\newcommand{\affdim}{\dim_{\mathrm{aff}}}
\newcommand{\Borel}{\C B}
\newcommand{\Frac}{\C F}
\newcommand{\Ng}{{\C N}}
\newcommand{\actson}{{\curvearrowright}}
\newcommand{\dd}{\,\mathrm{d}}
\newcommand{\ha}[1]{\cite[#1]{Hassett07itag}}
\newcommand{\cu}[1]{\cite[#1]{Cutkosky18iag}}
\newcommand{\gr}[1]{\cite[#1]{Groemer96gafssh}}
\newcommand{\kn}[1]{\cite[#1]{Knapp:lgbi}}
\newcommand{\TW}[1]{\cite[#1]{TomkowiczWagon:btp}}
\numberwithin{equation}{section}
\begin{document}

%------
% Insert the title of your paper and (if necessary)
% a short title for the running head.
%------
\title{Divisibility of Spheres with Measurable Pieces}
\titlemark{Divisibility of Spheres with Measurable Pieces}

%------
% Insert full names of the authors.
% Add further authors as follows:
%  \emsauthor{2}{}{}
%  \emsauthor{3}{}{}
% etc.
% Abbreviate first names for the running head.
%------
\emsauthor{1}{Clinton T.\ \surname{Conley}}{C.~T.~\surname{Conley}}
\emsauthor{2}{Jan \surname{Greb\'\i k}}{J.~\surname{Greb\'\i k}}
\emsauthor{3}{Oleg \surname{Pikhurko}}{O.~\surname{Pikhurko}}

%------
% Use \authormark if the list of authors is too
% long for the running head: \authormark{A.~\surname{Doe} et al.}
%------

%------
% Add one \emsaffil and one \email for each author.
%------
\emsaffil{1}{Department of Mathematical Sciences,
Carnegie Mellon University,
Pittsburgh, PA 15213, USA}
\emsaffil{2}{Mathematics Institute, University of Warwick,
Coventry CV4 7AL, UK}
\emsaffil{3}{Mathematics Institute and DIMAP,
 %Centre for Discrete Mathematics and its Applications (DIMAP)\\
University of Warwick,
Coventry CV4~7AL, UK}

%------
% Add MSC 2020 codes according to www.ams.org/msc/msc2020.html.
% Secondary codes (in square brackets) are optional.
%------
\classification[%
33C55, %  	Spherical harmonics
%42C10, %  	Fourier series in special orthogonal functions (Legendre polynomials, Walsh functions, etc.)
43A90, % 	Harmonic analysis and spherical functionsXXxXX
57M60%  	Group actions on manifolds and cell complexes in low dimensions
]{03E15, %  	Descriptive set theory
28A05%  	Classes of sets (Borel fields, $\sigma$-rings, etc.), measurable sets, Suslin sets, analytic setsYYyYY
% 54H05%  	Descriptive set theory (topological aspects of Borel, analytic, projective, etc. sets)
}

%------
% Add a list of keywords.
%------
\keywords{Euclidean sphere, divisibility under a group action, measurable set, special orthogonal group}

%------
% Insert your abstract.
%------
\begin{abstract}
 For an $r$-tuple $(\gamma_1,\ldots,\gamma_r)$ of special orthogonal $d\times d$ matrices, we say that the Euclidean $(d-1)$-dimensional sphere $\I S^{d-1}$
is \emph{$(\gamma_1,\ldots,\gamma_r)$-divisible} if there is a subset $A\subseteq \I S^{d-1}$ such that its translations by the rotations $\gamma_1,\ldots,\gamma_r$ partition the sphere. Motivated by some old open questions of Mycielski and Wagon, we investigate the version of this notion where the set $A$ has to be measurable with respect to the spherical measure. Our main result shows that measurable divisibility is impossible for a ``generic'' (in various meanings) $r$-tuple of rotations. This is in stark contrast to the recent result of Conley, Marks and Unger which implies that, for every ``generic'' $r$-tuple, divisibility is possible with parts that have the property of Baire. 
\end{abstract}

\maketitle

\section{Introduction}

%Let $d\ge 2$ be a given integer. 
Let $\SO(d)$ denote the group of  \emph{special orthogonal} $d\times d$  matrices, that is, real $d\times d$ matrices $M$ such that the determinant of $M$ is 1 and $M^TM=I_d$, where $I_d$ denotes the identity $d\times d$ matrix.  
The elements of this group are naturally identified with orientation-preserving 
isometries of the Euclidean unit sphere 
$$
\Sd:=\{\B x\in \I R^d\mid\|\B x\|_2=1\},
$$ and we will often refer to them as \emph{rotations}. 

For an $r$-tuple $\B\gamma=(\gamma_1,\ldots,\gamma_r)\in\SO(d)^r$, we say that $\Sd$ is \emph{$\B\gamma$-divisible} (or admits a \emph{$\B\gamma$-division})
if there is $A\subseteq \Sd$ such that its translates $\gamma_1.A,\ldots,\gamma_r.A$ 
partition $\Sd$ (that is, for every $\B x\in\I S^{d-1}$ there are unique $\B y\in A$ and $i\in [r]$ such that $\B x=\gamma_i.\B y$, where we denote $[r]:=\{1,\ldots,r\}$). 
Of course, a set $A$ works for $\B\gamma$ if and only if $\gamma_r.A$ works for $\B\beta:=(\gamma_1\gamma_r^{-1},\ldots,\gamma_{r-1}\gamma_r^{-1},I_d)$.
%, where $I_d\in \SO(d)$ denotes the $d\times d$ identity matrix. Thus $\B\gamma$-divisibility coincides with $\B\beta$-divisibility.
However,  we do not normally assume that any particular rotation is the identity, mostly for the notational convenience so that all indices can be treated uniformly. 
%So, when it is convenient, we may assume that $\gamma_r=I_d$ (namely we do this in Section~\ref{se:d=2}).

%we can always assume that $\gamma_1$ is the identity matrix $\Im$ by

We say that $\Sd$ is \emph{$r$-divisible} if there is an $r$-tuple $\B\gamma\in\SO(d)^r$ such that $\Sd$ is $\B\gamma$-divisible (or, in other words, if we can partition $\Sd$ into $r$ congruent pieces).
The integer pairs $d,r\ge 2$ such that $\Sd$ is $r$-divisible have been completely classified (see e.g.\ Theorem~6.6 in the book by Tomkowicz and Wagon~\cite{TomkowiczWagon:btp}). Namely, the only pairs when the answer is in the negative are when $r=2$ and $d$ is odd. In this case, the impossibility of any $(\gamma_1,\gamma_2)$-division follows from considering a fixed point $\B x\in\Sd$ of $\gamma_1^{-1}\gamma_2$ which exists as the dimension $d-1$ of the sphere is even. (Indeed, no set $A$ can work here: the translates  $\gamma_1.A$ and $\gamma_2.A$ intersect if $\B x\in A$ and do not cover $\gamma_1.\B x$ if~$\B x\not\in A$.)
On the other hand, the case of $d=2$ is trivial (e.g.\ one can take the $r$ rotations of the circle $\I S^1$ by multiples of the angle $2\pi/r$) while the first published solution for $\I S^2$ seems to be by Robinson~\cite[Page 254]{Robinson47fm}. 
%Interestingly, the main motivation behind Robinson's paper~\cite{Robinson47fm} was to show that the smallest possible number of pieces in the Banach-Tarski Paradox~\cite{BanachTarski24} is~5. 
%, as a direct consequence of his more general theorem on Page~252. 
Furthermore, the $r$-divisibility for $\I S^{d-1}$ easily implies the $r$-divisibility of $\I S^{d+1}$, see e.g.\ the proof of Theorem~6.6 in~\cite{TomkowiczWagon:btp} or Lemma~\ref{lm:extend} here.

Mycielski~\cite{Mycielski55} showed that there is a subset $A\subseteq \I S^2$ such that for every integer $r\ge 3$ there are $\gamma_1,\ldots,\gamma_r$ with $\gamma_1.A,\ldots,\gamma_r.A$ partitioning the sphere. This should be compared with the classical paradox of Hausdorff~\cite{Hausdorff14ma} who produced such a set $A$ that works,  apart from a countable subset of $\I S^2$ of errors, for every $r\ge 2$. (Note that we cannot take $r=2$ in Mycielski's result because $\I S^2$ is not 2-divisible.)

Let $\mu$ be the \emph{spherical measure} on $\I S^{d-1}$, which can be defined as the $(d-1)$-dimensional Hausdorff measure with respect to the standard \emph{arc-length 
distance} on the sphere (where the distance between $\B x,\B y\in \I S^{d-1}$ is the angle between the vectors $\B x$ and~$\B y$). We call a subset of $\I S^{d-1}$ \emph{measurable} if it belongs to the $\mu$-completion of the Borel $\sigma$-algebra.
Note that the paradoxical set $A$ in the results of Hausdorff~\cite{Hausdorff14ma} and Mycielski~\cite{Mycielski55} cannot be measurable with respect to the (rotation-invariant) measure $\mu$ on~$\I S^2$, for otherwise the existence of a partition $\gamma_1.A,\ldots,\gamma_r.A$ of $\I S^{d-1}$ up to a countable (and thus $\mu$-null) set implies that $\mu(A)=1/r$, a contradiction to $r$ assuming different values.
Mycielski~\cite{Mycielski57,Mycielski57a} asked if one can show that $\I S^2$ is  $r$-divisible without using the Axiom of Choice. Wagon~\cite[Question 4.15]{Wagon:btp} (or Question 5.15 in \cite{TomkowiczWagon:btp}) asked if the $3$-divisibility of $\I S^2$ can be shown with measurable sets (thus 
the Axiom of Choice can be applied on a $\mu$-null set). Measurable divisibility for higher dimensional spheres is easier because of a constructive way of lifting up a division from $\I S^{d-1}$ to~$\I S^{d+1}$.
It is known that $\Sd$ is $r$-divisible with measurable pieces for $r\ge 3$ and odd $d\ge 5$ (which follows from the proof of Theorem~6.6(b) in~\cite{TomkowiczWagon:btp}, see Lemma~\ref{lm:extend} here) and with Borel pieces for $r\ge 2$ and even $d\ge 2$ (see e.g.\ \TW{Theorem 6.6(a)}).

The above questions by Mycielski and Wagon are still open, although some related progress was obtained by Conley, Marks and Unger~\cite{ConleyMarksUnger20} whose general results imply that, unless $r=2$ and $d$ is odd, the sphere $\I S^{d-1}$ is $r$-divisible so that each piece has the \emph{property of Baire} (that is, under one of equivalent definitions, each piece can be represented as the symmetric difference of a Borel set and a meager set; for more details see e.g.\ the textbook on descriptive set theory by Kechris~\cite[Section~8.F]{Kechris:cdst}). The derivation of this result is given in Proposition~\ref{pr:Baire} here.

Here we propose to study the more general question of describing the set of those $r$-tuples $\B\gamma\in\SO(d)^r$ such that $\Sd$ is $\B\gamma$-divisible with measurable pieces.

First, we consider the case when the rotations are ``generic''. 
More precisely, 
%let $\QQ$ be the algebraic closure of the field of rational numbers and 
let us call an $r$-tuple of matrices $\B\gamma=(\gamma_1,\ldots,\gamma_r)\in \SO(d)^r$ \emph{generic} if, for every polynomial $p$ with rational coefficients in $d^2r$ variables, $p(\B\gamma)=0$ implies that $p(\B\beta)=0$ for every $\B\beta\in \SO(d)^r$, where e.g.\ $p(\B\gamma)$ denotes the value of $p$ on the $d^2r$ individual entries of the matrices corresponding to $\gamma_1,\ldots,\gamma_r$ under the standard basis of~$\I R^{d}$. In other words, this property states that if a polynomial with rational (equivalently, integer) coefficients vanishes on (the matrix entries of) $\B\gamma$ then it necessarily vanishes everywhere on~$\SO(d)^r$.

Our main result shows that \textbf{no} generic $\B\gamma$  works in the   measurable setting, even in a rather relaxed fractional version. 
%Let $\mu$ be the spherical measure on $\Sd$.

\begin{theorem}\label{th:main} Let $d\ge 2$ and $r\ge 2$ be integers. Let $(\gamma_1,\ldots,\gamma_r)\in \SO(d)^r$ be generic. Then every $f\in L^2(\I S^{d-1},\mu)$ with $\sum_{i=1}^r \gamma_i.f=1$ $\mu$-almost everywhere\ is the constant function $1/r$ $\mu$-almost everywhere, where $\gamma_i.f$ denotes the function that maps $\B x\in\I S^{d-1}$ to~$f(\gamma_i^{-1}.\B x)$.
\end{theorem}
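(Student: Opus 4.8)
We need to show: if $(\gamma_1,\ldots,\gamma_r) \in \SO(d)^r$ is generic and $f \in L^2(\mathbb{S}^{d-1}, \mu)$ satisfies $\sum_{i=1}^r \gamma_i.f = 1$ a.e., then $f = 1/r$ a.e.

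Key idea: use the decomposition of $L^2(\mathbb{S}^{d-1})$ into spherical harmonics. So $L^2(\mathbb{S}^{d-1}, \mu) = \bigoplus_{n \geq 0} \mathcal{H}_n$ where $\mathcal{H}_n$ is the space of degree-$n$ spherical harmonics, which is $\SO(d)$-invariant and irreducible. The action $\gamma.f(\mathbf{x}) = f(\gamma^{-1}.\mathbf{x})$ preserves each $\mathcal{H}_n$. Write $f = \sum_n f_n$ with $f_n \in \mathcal{H}_n$. The equation $\sum_i \gamma_i.f = 1$ means: $\sum_i \gamma_i.f_0 = 1$ (so $f_0 = 1/r$, since $\mathcal{H}_0$ is constants and each $\gamma_i$ acts trivially), and for each $n \geq 1$, $\sum_{i=1}^r \gamma_i.f_n = 0$.

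So the whole problem reduces to: for generic $(\gamma_1, \ldots, \gamma_r)$, show that for every $n \geq 1$, the only $v \in \mathcal{H}_n$ with $\sum_i \gamma_i.v = 0$ is $v = 0$. Equivalently, consider the linear map $T_n: \mathcal{H}_n \to \mathcal{H}_n$, $T_n = \sum_i \rho_n(\gamma_i)$, where $\rho_n$ is the representation of $\SO(d)$ on $\mathcal{H}_n$; we want $\ker T_n = 0$, i.e., $T_n$ invertible. Since $\dim \mathcal{H}_n < \infty$, this is a polynomial condition: $\det(T_n) \neq 0$ is a polynomial in the matrix entries of the $\gamma_i$ (the entries of $\rho_n(\gamma_i)$ are polynomials in the entries of $\gamma_i$, since $\rho_n$ is a polynomial representation). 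By genericity, it suffices to exhibit, for each $n \geq 1$, a single choice $(\delta_1, \ldots, \delta_r) \in \SO(d)^r$ (even with algebraic entries — genericity is about rational polynomials vanishing, so any concrete counterexample lies in the closure of the rational-vanishing locus's complement) with $\det(\sum_i \rho_n(\delta_i)) \neq 0$. Actually one must be careful: genericity says a rational polynomial vanishing at $\B\gamma$ vanishes everywhere on $\SO(d)^r$; so if $\det(\sum_i \rho_n(\cdot))$, which has rational (indeed integer) coefficients, does not vanish identically on $\SO(d)^r$, then it does not vanish at the generic $\B\gamma$. So it suffices to show this determinant is not identically zero on $\SO(d)^r$, for each $n \geq 1$.

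The main obstacle is thus: for each $n \geq 1$, find $(\delta_1, \ldots, \delta_r) \in \SO(d)^r$ with $\sum_{i=1}^r \rho_n(\delta_i)$ invertible on $\mathcal{H}_n$. A clean approach: take all $\delta_i$ equal to a single rotation $\delta$. Then $\sum_i \rho_n(\delta) = r \rho_n(\delta)$, which is invertible since $\rho_n(\delta)$ is. This handles it trivially! Wait — but $(\delta, \ldots, \delta)$ is far from generic, yet that's fine: we only need the polynomial $\det(\sum \rho_n)$ to be non-vanishing somewhere on $\SO(d)^r$, and $(\delta,\ldots,\delta)$ witnesses that. Hmm, but then this would prove $f_n = 0$ for ALL $n$, giving $f = f_0 = 1/r$. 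That seems too easy, so let me double check the reduction. We need $\sum_i \gamma_i.f_n = 0 \Rightarrow f_n = 0$, i.e. $T_n$ has trivial kernel. We showed $\det T_n$ is a nonzero polynomial (witnessed at the diagonal), hence nonzero at generic $\B\gamma$, hence $T_n$ invertible, hence $f_n = 0$. Yes. So the reduction does all the work, and the only slightly delicate points are: (i) justifying termwise application to the harmonic decomposition — that $\sum_i \gamma_i.f = 1$ in $L^2$ decomposes orthogonally into the equations on each $\mathcal{H}_n$, which follows since the $\SO(d)$-action is unitary and the decomposition is orthogonal and invariant; (ii) $\rho_n$ is a polynomial (not just continuous) representation, so that $\det T_n$ is genuinely a polynomial in the entries of the $\gamma_i$ with rational coefficients — standard for spherical harmonics, where matrix coefficients of $\rho_n$ are polynomials of degree $n$ in the entries; (iii) each summand equation $\sum_i \rho_n(\gamma_i) f_n = (\text{harmonic projection of }1 = 0 \text{ for } n\ge 1)$ uses that $1 \in \mathcal{H}_0$.

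I will write: first recall the harmonic decomposition and the fact that the quasi-regular representation restricts to the orthogonal irreducible pieces $\rho_n$ on $\mathcal{H}_n$ with polynomial matrix coefficients; then project the identity $\sum_i \gamma_i.f = 1$ onto each $\mathcal{H}_n$, reducing to $f_0 = 1/r$ and $T_n f_n = 0$ for $n \ge 1$ where $T_n = \sum_i \rho_n(\gamma_i)$; then observe $\det T_n$ is a rational-coefficient polynomial on $\SO(d)^r$ that is nonzero at the diagonal point $(\delta,\ldots,\delta)$ (as $T_n = r\rho_n(\delta)$ is invertible there); by genericity of $\B\gamma$ conclude $\det T_n(\B\gamma) \neq 0$, so $f_n = 0$ for all $n \ge 1$; hence $f = 1/r$ a.e. I expect the only real care needed is to state correctly that "a rational polynomial vanishing at $\B\gamma$ vanishes on all of $\SO(d)^r$" gives the contrapositive we use, and to make sure $\det T_n$ has rational coefficients, for which we note $\rho_n$ can be realized on a rational basis of $\mathcal{H}_n$ (e.g. monomials in the coordinates modulo the ideal generated by $\|\mathbf{x}\|^2 - 1$ and the Laplacian relations), so all coefficients are rational.
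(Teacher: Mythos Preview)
Your proof is correct and follows the same overall strategy as the paper: expand $f$ into spherical harmonics, project the identity $\sum_i\gamma_i.f=1$ onto each $\mathcal H_n$, and use genericity together with the witness $(I_d,\ldots,I_d)$ to force $f_n=0$ for $n\ge 1$. The difference is in packaging. The paper shows that the functions $G^{\mathbf v}_{n,\boldsymbol\gamma}=\sum_i P_n^{\gamma_i^{-1}.\mathbf v}$ span $\mathcal H_n$ for generic $\boldsymbol\gamma$; since $G^{\mathbf v}_{n,\boldsymbol\gamma}=T_n^{\,*}P_n^{\mathbf v}$, this is exactly the surjectivity of $T_n^*$, i.e.\ the invertibility of your $T_n$. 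To obtain a rational-coefficient polynomial, the paper builds a concrete rational basis $P_n^{\mathbf v_1},\ldots,P_n^{\mathbf v_{N_n}}$ with $\mathbf v_i\in\mathbb S^{d-1}\cap\mathbb Q^d$, which requires an auxiliary lemma on the density of rational points on the sphere. Your route is more economical: a rational basis of $\mathcal H_n$ exists simply because $\mathrm{Harm}_n=\ker(\Delta:V_n\to V_{n-2})$ is the kernel of a linear map with rational entries, and in such a basis (using $\gamma^{-1}=\gamma^T$ on $\SO(d)$) the matrix entries of $\rho_n(\gamma)$ are visibly rational-coefficient polynomials in the entries of $\gamma$, so $\det T_n$ is too. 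Both arguments then check non-vanishing at the identity tuple and invoke genericity; yours avoids the Gegenbauer machinery and the rational-points lemma at the cost of being slightly less explicit.
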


 In sharp contrast, we can derive  with some extra work  from the results 
 %of Conley, Marks and Unger~
 in \cite{ConleyMarksUnger20} that  \textbf{every} generic $\B\gamma$ works with pieces that have the property of Baire.

\begin{proposition}
%[Conley, Marks and Unger~\cite{ConleyMarksUnger20}] 
\label{pr:Baire}
Let $r\ge 2$ and $d\ge 2$ be arbitrary integers, except if $d$ is odd then we require that $r\ge 3$. Let $(\gamma_1,\ldots,\gamma_r)\in\SO(d)^r$ be generic. Then
there is a subset $A$ of $\Sd$ with the property of Baire such that $\gamma_1.A,\ldots,\gamma_r.A$ partition $\Sd$.
\end{proposition}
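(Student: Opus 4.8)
The plan is to extract the statement from the Baire-category machinery of Conley--Marks--Unger~\cite{ConleyMarksUnger20}, whose results produce Baire-measurable ``perfect matchings'' (or more precisely Borel/Baire-measurable selectors witnessing equidecompositions) for free actions of countable groups on Polish spaces, provided an appropriate combinatorial/expansion hypothesis holds. The key point is that genericity of $\B\gamma$ forces the group $\Gamma:=\langle\gamma_1,\ldots,\gamma_r\rangle\le\SO(d)$ to be free of rank $r$ (this is the standard fact that a generic tuple generates a free group, since the nontriviality of a reduced word $w(\gamma_1,\ldots,\gamma_r)$ is a polynomial condition that is satisfied somewhere on $\SO(d)^r$ — e.g.\ by the Tits alternative / explicit free subgroups of $\SO(3)$ embedded into $\SO(d)$ — and hence, by genericity, everywhere), and moreover that the action $\Gamma\actson\Sd$ is free (the fixed-point set of any nontrivial $g\in\Gamma$ is a proper algebraic subvariety, and a generic point avoids all of them; one must check this is compatible with the Baire-category framework, i.e.\ that the non-free locus is meager, which it is since it is a countable union of lower-dimensional subvarieties). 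One also needs the action to be ergodic with respect to category (generic ergodicity), which again follows from genericity.

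The combinatorial heart is then the following: partitioning $\Sd$ into $\gamma_1.A,\ldots,\gamma_r.A$ is exactly asking for a set $A$ that is a ``fundamental domain-like'' transversal for the partition of the Schreier-type structure induced by the left cosets $\gamma_1\Gamma',\ldots$ — more concretely, consider the graph (or hypergraph) on $\Sd$ whose structure encodes, for each orbit, the requirement that $A$ meets each orbit in a set whose $\gamma_i$-translates tile the orbit. When $\Gamma$ is free on $\gamma_1,\ldots,\gamma_r$ this becomes a tiling problem on the Cayley graph of the free group by the ``star'' $\{\gamma_1,\ldots,\gamma_r\}$, i.e.\ a perfect matching / exact tiling question, and this is precisely the type of problem for which \cite{ConleyMarksUnger20} provides Baire-measurable solutions under the relevant hyperfiniteness-free hypotheses. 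I would invoke their main theorem (on Baire-measurable perfect matchings of Borel bipartite graphs satisfying a Hall-type expansion condition, or their result on Baire-measurable equidecompositions) with the Schreier graph of $\Gamma\actson\Sd$; the expansion condition holds because the relevant graph is $r$-regular bipartite-ish with the free-group geometry providing the needed non-amenability / expansion. The case distinction $d$ odd $\Rightarrow r\ge 3$ enters exactly as in the classical (non-measurable) discussion in the introduction: when $r=2$ and $d$ is odd, $\gamma_1^{-1}\gamma_2$ has a fixed point on $\Sd$ regardless of genericity (the dimension obstruction is topological, not algebraic), so no division exists at all; for all other $(d,r)$ the combinatorial obstruction vanishes and the CMU theorem applies.

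The main obstacle I anticipate is bookkeeping rather than conceptual: one must (i) verify precisely which theorem of \cite{ConleyMarksUnger20} applies — they work with several settings (Borel, $\mu$-measurable, Baire-measurable) and several graph-combinatorial outputs — and confirm that the Baire-measurable version does not require any amenability or unique-ergodicity hypothesis that would fail here; (ii) translate the ``$r$-division'' requirement into the exact matching/tiling language of their framework, keeping careful track of which vertex plays which role (this is where the asymmetry between $r=2$ and $r\ge 3$, and between $\Id$ being or not being among the $\gamma_i$, must be handled cleanly — recall the remark in the introduction that a set works for $\B\gamma$ iff $\gamma_r.A$ works for the normalised tuple with last coordinate $\Id$); and (iii) discharge the genericity-to-freeness and genericity-to-generic-ergodicity reductions, which are routine but need the observation that each relevant ``bad'' condition (a word being trivial, a rotation having a fixed point, an invariant set being non-trivial) is cut out by polynomials that are not identically zero on $\SO(d)^r$, so genericity rules them out. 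I would present these as a sequence of short lemmas, culminating in a one-paragraph invocation of the external theorem.
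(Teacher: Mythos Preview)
Your overall strategy---genericity $\Rightarrow$ free group $\Rightarrow$ invoke Conley--Marks--Unger---matches the paper's, but there are two concrete gaps.

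First, your claim that the action $\Gamma\actson\Sd$ is free is false when $d$ is odd: every element of $\SO(d)$ then has $+1$ as an eigenvalue and hence fixes some point of $\Sd$, so no nontrivial element acts freely, regardless of genericity. Your parenthetical hedge (``the non-free locus is meager'') is the correct salvage, but you never say what structural property replaces freeness on the non-free part. The paper shows (Lemma~\ref{lm:GenFree}) that for generic $\B\gamma$ the action is \emph{locally commutative} when $d$ is odd, and this is exactly what is needed to invoke the classical existence result of Dekker.

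Second, and more importantly, you are missing the patching step. The CMU result you want (stated in the paper as Theorem~\ref{th:CMU}) applies only to the \emph{free part} $X$ of the action and produces a Borel $A\subseteq X$ whose translates $\gamma_1.A,\ldots,\gamma_r.A$ are disjoint with \emph{meager} complement $Z$; it does not produce a full partition. To finish, one must fill in $Z$ (after saturating it to be $\Gamma$-invariant) with an arbitrary $\B\gamma$-division, and for that one needs to know that such a division exists at all. The paper gets this from Dekker's theorem (via the free/locally-commutative dichotomy just mentioned): take any $B\subseteq\Sd$ with $\gamma_1.B,\ldots,\gamma_r.B$ partitioning $\Sd$, and set $C:=A\cup(B\cap Z)$. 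Since $Z$ is meager, $C$ has the property of Baire. Your proposal never mentions this existence input or the patching, and without it the CMU output does not yield the proposition. The ``generic ergodicity'' and ``Hall-type expansion'' considerations you raise are not needed.
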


	Theorem~\ref{th:main} and  Proposition~\ref{pr:Baire} add to a growing body of results in measurable combinatorics (see e.g.\ the recent survey by Kechris and Marks~\cite{KechrisMarks20survey}), where the requirements that the pieces are measurable and have the property of Baire respectively lead to different answers.

The following lemma 
%justifies the term ``generic''.
shows that, in various meanings, ``most'' elements of $\SO(d)^r$ are generic.

\begin{lemma}\label{lm:GNew} Let $r\ge 1$, $d\ge 2$ and $\Ng$ be the set of $r$-tuples in~$\SO(d)^r$ that are not generic. Then the following statements hold.
	
	\begin{enumerate}[(i)]
%	\item\label{it:GAlg}	For every $r$-tuple of matrices $\B\gamma$ in $\Ng$, the ${d\choose 2}r$-tuple of their entries strictly above the diagonals is algebraically dependent over~$\I Q$.
%		\item\label{it:GAlg} Every $r$-tuple of matrices $\B\gamma\in \SO(d)^r$ with their $r{n\choose 2}$ entries strictly above diagonal being algebraically independent over $\I Q$ is not in $\Ng$ (that is, is not generic).
		\item\label{it:Gmeas} The set $\Ng$ has measure 0 with respect to the Haar measure on the group $\SO(d)^r$.

		\item\label{it:GTop} The set $\Ng$ is a meager subset of $\SO(d)^r$ with respect to the topology induced by the Euclidean topology on $\I R^{d^2r}\supseteq \SO(d)^r$.
	\end{enumerate}
\end{lemma}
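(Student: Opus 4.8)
The plan is to realize $\SO(d)^r$ as a real algebraic variety and invoke standard facts about the zero sets of polynomials on irreducible varieties. First I would record that $\SO(d)$ is an irreducible affine algebraic variety over $\mathbb{Q}$: it is cut out in $\mathbb{R}^{d^2}$ by the polynomial equations $M^TM=I_d$ together with $\det M = 1$, and it is well known (e.g.\ via the fact that $\SO(d)$ is a connected Lie group, or by an explicit argument that $\SO(d)$ is rationally parametrized by the Cayley transform on a Zariski-dense open subset) that this variety is irreducible. Hence the product $\SO(d)^r$ is also an irreducible affine $\mathbb{Q}$-variety, say of dimension $N = r\binom{d}{2}$, sitting inside $\mathbb{R}^{d^2 r}$. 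The set $\Ng$ of non-generic $r$-tuples is exactly the union, over all polynomials $p\in\mathbb{Q}[x_1,\dots,x_{d^2r}]$ that do not vanish identically on $\SO(d)^r$, of the proper subvarieties $\{\B\gamma\in\SO(d)^r : p(\B\gamma)=0\}$. Since $\mathbb{Q}[x_1,\dots,x_{d^2r}]$ is countable, $\Ng$ is a countable union of sets of the form $Z(p)\cap\SO(d)^r$ where $p$ does not vanish on all of $\SO(d)^r$; by irreducibility each such $Z(p)\cap\SO(d)^r$ is a proper Zariski-closed subset of $\SO(d)^r$, hence (being a proper subvariety of an irreducible variety) it has strictly smaller dimension and is nowhere dense in the Euclidean topology.

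For part \eqref{it:Gmeas}, I would argue that a proper subvariety $V$ of the irreducible variety $\SO(d)^r$ has Haar measure zero. One clean way: $\SO(d)^r$ is a smooth connected manifold of dimension $N$, and $V$ is contained in the zero set of a single real-analytic function (the polynomial $p$) that is not identically zero on this connected analytic manifold; the zero set of such a function has $N$-dimensional Lebesgue (hence Haar) measure zero — this is a standard fact, provable by a local argument using the Weierstrass preparation theorem or by induction on dimension via Fubini. A countable union of Haar-null sets is Haar-null, so $\mu_{\mathrm{Haar}}(\Ng)=0$. For part \eqref{it:GTop}, each $Z(p)\cap\SO(d)^r$ with $p\not\equiv 0$ on $\SO(d)^r$ is closed and has empty interior in $\SO(d)^r$ (if it contained a nonempty open set $U$, then $p$ would vanish on the closure of $U$, and since $\SO(d)^r$ is irreducible — equivalently, a connected analytic manifold on which a nonzero analytic function cannot vanish on an open set — this forces $p\equiv 0$ on $\SO(d)^r$, a contradiction). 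Thus each piece is nowhere dense, and $\Ng$, a countable union of nowhere dense sets, is meager.

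The main obstacle — really the only non-routine point — is justifying the \emph{irreducibility} of $\SO(d)$ as a $\mathbb{Q}$-variety cleanly, and correspondingly the statement ``a nonzero polynomial cannot vanish on a nonempty Euclidean-open subset of $\SO(d)^r$.'' I would handle this via the Cayley transform: for a skew-symmetric matrix $S$ with $I_d+S$ invertible, $(I_d-S)(I_d+S)^{-1}\in\SO(d)$, and this gives a dominant rational map from the affine space of skew-symmetric matrices (which is irreducible) onto $\SO(d)$, whose image contains a Zariski-dense open subset; since $\SO(d)$ is connected as a topological group (and the Cayley image is dense), this yields irreducibility, and in particular $\SO(d)$ is not a union of two proper subvarieties. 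Irreducibility is preserved under products over a field, giving the irreducibility of $\SO(d)^r$. Alternatively, one can cite that $\SO(d)$ is a connected real-analytic manifold, which already suffices for both the measure-zero and meagerness arguments above without explicitly invoking algebraic irreducibility — and I would probably present it that way to keep the proof short. With either justification in hand, the rest of the proof is the two short measure-theoretic/topological observations in the previous paragraph.
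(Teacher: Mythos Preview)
Your proof is correct, and the overall architecture---write $\Ng$ as a countable union of zero sets of nonvanishing polynomials, then show each such zero set is Haar-null and nowhere dense---is the same as the paper's. The execution, however, is genuinely different. You work analytically: $\SO(d)^r$ is a connected real-analytic manifold, and on such a manifold the zero set of a nonzero real-analytic function has Lebesgue (hence Haar) measure zero and empty interior, by the identity theorem and a Fubini/Weierstrass argument. The paper instead works algebraically: it proves $\SO(d)^r$ is an irreducible variety (via a parametrisation by products of reflections), deduces that each $Z_i$ is a proper subvariety of strictly smaller dimension, triangulates it into smooth submanifolds of dimension below $\binom{d}{2}r$, and appeals to an explicit differential-form construction of Haar measure to get $\nu(Z_i)=0$; meagerness is then derived from the measure statement via compactness and translation-invariance rather than proved directly. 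Your route is shorter and avoids the semi-algebraic triangulation and the explicit Haar construction; the paper's route, on the other hand, dovetails with the algebraic machinery (irreducibility, transcendence bases) it needs anyway for Lemma~\ref{lm:GPChar}.

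One caution on your first, algebraic, variant: the assertion ``irreducibility is preserved under products over a field'' is not automatic over non-algebraically-closed fields without knowing geometric irreducibility, which is precisely why the paper proves irreducibility of $\SO(d;K)^r$ directly rather than of $\SO(d;K)$ alone. Your preferred analytic alternative (connectedness of the Lie group, which trivially passes to products) sidesteps this entirely, so if you go that route the issue does not arise.
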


Also, by using some algebraic geometry, we can give a more concrete characterisation of generic $r$-tuples of rotations. In particular, the following lemma allows us to write an ``explicit'' generic point: just let the entries above the diagonals be sufficiently small reals that are algebraically independent over $\I Q$ and extend this to an element of $\SO(d)^r$ by Claim~\ref{cl:e} here.

\begin{lemma}
\label{lm:GPChar} 
Let $r\ge 1$, $d\ge 2$,  and $\B\gamma\in\SO(d)^r$. Then $\B\gamma$ is generic if and only if  the ${d\choose 2}r$-tuple of the matrix entries of $\B\gamma$ strictly above the diagonals is algebraically independent over~$\I Q$.\end{lemma}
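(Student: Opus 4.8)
The plan is to recast ``generic'' in the language of algebraic geometry. Write $V:=\SO(d)^r\subseteq\I R^{d^2r}$, let $I(V)\subseteq\I Q[x_1,\ldots,x_{d^2r}]$ be the ideal of all polynomials vanishing on $V$, and let $\I Q[V]:=\I Q[x_1,\ldots,x_{d^2r}]/I(V)$ be its coordinate ring. Straight from the definition, a point $\B\gamma\in V$ is generic if and only if the ideal of polynomials vanishing at $\B\gamma$ equals $I(V)$, that is, if and only if the evaluation homomorphism $\mathrm{ev}_{\B\gamma}\colon\I Q[V]\to\I R$ (sending the class of $x_j$ to the $j$-th coordinate of $\B\gamma$) is \emph{injective}. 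I would combine this with two geometric facts. \textbf{(B)} The group $\SO(d)$ is a smooth affine algebraic group over $\I Q$, and the Cayley map $S\mapsto(I_d+S)(I_d-S)^{-1}$ is a $\I Q$-birational map between the ${d\choose 2}$-dimensional affine space of skew-symmetric $d\times d$ matrices and $\SO(d)$; hence $V$ is geometrically irreducible of dimension $m:={d\choose 2}r$, so $\I Q[V]$ is a domain of Krull dimension $m$. \textbf{(C)} The $m$ coordinate functions on $V$ attached to the matrix entries lying strictly above the diagonals are algebraically independent over $\I Q$ inside $\I Q[V]$; equivalently, the morphism from $V$ to $m$-dimensional affine space that reads off those entries is dominant.

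Granting (B) and (C), both implications are short. If $\B\gamma$ is generic, then the injective homomorphism $\mathrm{ev}_{\B\gamma}$ sends the algebraically independent family of coordinate functions from (C) to an algebraically independent family of real numbers, and this image is precisely the ${d\choose 2}r$-tuple of entries of $\B\gamma$ lying strictly above the diagonals. Conversely, suppose these $m$ real numbers are algebraically independent over $\I Q$, so $\mathrm{trdeg}_{\I Q}\I Q(\B\gamma)\ge m$, where $\I Q(\B\gamma)$ is the subfield of $\I R$ generated by the entries of $\B\gamma$. Since $\I Q[\B\gamma]:=\mathrm{ev}_{\B\gamma}(\I Q[V])$ is a finitely generated $\I Q$-domain and a quotient of $\I Q[V]$, and the Krull dimension of such a ring equals the transcendence degree of its fraction field, we get $\mathrm{trdeg}_{\I Q}\I Q(\B\gamma)\le\dim\I Q[V]=m$, hence $\mathrm{trdeg}_{\I Q}\I Q(\B\gamma)=m$. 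If $\mathrm{ev}_{\B\gamma}$ were not injective, its kernel would be a nonzero prime of the domain $\I Q[V]$, and modding a finitely generated domain over a field out by a nonzero prime strictly lowers the Krull dimension, so $\dim\I Q[\B\gamma]<\dim\I Q[V]=m$, contradicting $\dim\I Q[\B\gamma]=\mathrm{trdeg}_{\I Q}\I Q(\B\gamma)=m$. Therefore $\mathrm{ev}_{\B\gamma}$ is injective, i.e.\ $\B\gamma$ is generic.

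It remains to prove (C), which is the heart of the matter. It is enough to treat $r=1$, as the morphism in (C) is then the $r$-fold product of the $r=1$ map. Composing the $r=1$ projection with the Cayley map yields the rational self-map of ${d\choose 2}$-dimensional affine space sending a skew-symmetric matrix $S$ to the strictly-upper-triangular part of $(I_d+S)(I_d-S)^{-1}$. Near $S=\Zero$ the Cayley map is regular, since $\det(I_d-\Zero)=1$, and $(I_d+S)(I_d-S)^{-1}=I_d+2S+O(S^2)$, so the Jacobian determinant of this self-map at the origin equals $2^{{d\choose 2}}\ne0$. Thus the Jacobian determinant is not identically zero, which in characteristic $0$ forces the strictly-upper-triangular entries of $(I_d+S)(I_d-S)^{-1}$ to be algebraically independent over $\I Q$ as rational functions of the entries of $S$; transporting this back along the $\I Q$-birational Cayley map gives (C) for $r=1$. (Alternatively, one may compute the differential of the projection at $I_d$ and observe that it is the linear isomorphism sending a skew-symmetric matrix to its strictly-upper-triangular part.)

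The substance of the argument is (C): it says that the entries strictly above the diagonals may be used as a free coordinate system on $\SO(d)^r$, equivalently that the corresponding projection is dominant, and the Cayley map turns this into a single Jacobian computation at one point. Everything else is a routine passage through standard commutative algebra (transcendence degree equals Krull dimension for finitely generated domains over a field, and the strict drop of dimension when quotienting by a nonzero prime) together with the elementary geometry of $\SO(d)$.
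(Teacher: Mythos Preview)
Your argument is correct and takes a genuinely different route from the paper.  The paper establishes the two structural inputs by hand: irreducibility of $\SO(d;K)^r$ via products of reflections, and the transcendence-basis claim for the upper entries by an explicit row-by-row elimination (Lemma~\ref{lm:upper} and Claim~\ref{cl:e}); it then deduces the harder implication from a bespoke lemma (Lemma~\ref{lm:suff}) whose proof passes to $\I C$, invokes Hilbert's Nullstellensatz, and argues via Buchberger's algorithm that a Gr\"obner elimination computed over $L$ in fact lives over $K$.  You replace all of this with two standard moves: the Cayley transform gives a $\I Q$-birational isomorphism $\mathbb{A}^{\binom{d}{2}}\dashrightarrow\SO(d)$, which in one stroke yields geometric integrality, the dimension $m=\binom{d}{2}r$, and (after the Jacobian check at the identity) the algebraic independence of the upper entries in $\I Q[V]$; the reverse implication then drops out of the equality $\dim\I Q[V]=\mathrm{trdeg}_{\I Q}\I Q(\B\gamma)$ together with the fact that a nonzero prime in a finitely generated domain over a field strictly lowers Krull dimension.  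Your approach is shorter and more conceptual, at the cost of quoting heavier commutative algebra (the dimension formula for finitely generated domains over a field); the paper's approach is more explicit and self-contained, trading elegance for a proof that can in principle be followed without background in dimension theory.  One small point worth making explicit in your write-up of (B) is why $\I Q[V]$, defined via the vanishing ideal on the \emph{real} points, coincides with the coordinate ring of the $\I Q$-scheme (equivalently, why the real locus is Zariski-dense in the $\I Q$-variety); this follows from the smoothness and geometric irreducibility you already have, but is worth a sentence.
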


In the extreme opposite case, we show that, for odd $d\ge 3$, $\B\gamma$-divisibility cannot be attained when $\B\gamma$ generates a finite subgroup of $\SO(d)$.

\begin{proposition}\label{pr:Euler} Let $d\ge 3$ be odd. Suppose that $\gamma_1,\ldots,\gamma_r\in\SO(d)$, $r\ge 3$, generate a finite subgroup $\Gamma\subseteq\SO(d)$. Then $\Sd$ is not $(\gamma_1,\ldots,\gamma_r)$-divisible.\end{proposition}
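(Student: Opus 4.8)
The plan is to reduce the claim to a combinatorial fact about finite $\Gamma$-sets and then to extract the needed obstruction from the representation theory of $p$-groups together with the parity of $d$; throughout, $v_p(m)$ denotes the exponent of a prime $p$ in the integer $m$. The first step is to observe that any $\B\gamma$-division restricts to each $\Gamma$-orbit. Suppose $\I S^{d-1}=\gamma_1.A\sqcup\cdots\sqcup\gamma_r.A$ and let $O\subseteq\I S^{d-1}$ be a $\Gamma$-orbit. Since each $\gamma_i$ lies in $\Gamma$ it preserves $O$, and a short check gives $\gamma_i.A\cap O=\gamma_i.(A\cap O)$; intersecting the partition with $O$ therefore yields $O=\gamma_1.A_O\sqcup\cdots\sqcup\gamma_r.A_O$ with $A_O:=A\cap O$. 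As $O$ is finite and every $\gamma_i$ acts on it as a bijection, this forces $|O|=r\,|A_O|$, so $r\mid|O|$. Hence it suffices to exhibit a single $\Gamma$-orbit $O$ on $\I S^{d-1}$ with $r\nmid|O|$; since $|O|=[\Gamma:\mathrm{Stab}_\Gamma(x)]$ for $x\in O$, this amounts to finding a point whose stabiliser is $p$-adically large for a suitable prime $p\mid r$.

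Next I would fix a prime $p\mid r$, taking $p$ odd whenever $r$ is not a power of $2$ and $p=2$ otherwise, so that in the latter case $r$ is a power of $2$ with $r\ge4$ and hence $v_p(r)\ge2$. If $p\nmid|\Gamma|$, then $p\nmid|O|$ for every orbit $O$, so $r\nmid|O|$ and we are done (this includes the case of trivial $\Gamma$); so assume $p\mid|\Gamma|$ and let $P$ be a Sylow $p$-subgroup. The crucial input is the following lemma: \emph{if a $p$-group $P$ acts orthogonally on $\I R^n$ with $n$ odd, then some line of $\I R^n$ is fixed pointwise by a subgroup $Q\le P$ with $[P:Q]\le2$.} To see this, decompose $\I R^n$ into irreducible real $P$-modules. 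Each summand of dimension greater than $1$ has even dimension — for $p=2$ this is immediate from the Frobenius--Schur classification, and for $p$ odd it follows from the fact that a group of odd order has no non-trivial real-valued irreducible character, so the non-trivial real irreducibles are the even-dimensional ones of complex or quaternionic type — whereas each $1$-dimensional summand is a homomorphism $P\to\{\pm1\}$. Since $n$ is odd there is an odd, hence positive, number of $1$-dimensional summands; picking one of them, $\ell$, and letting $\chi\colon P\to\{\pm1\}$ be the character by which $P$ acts on $\ell$ and $Q:=\ker\chi$, we get $[P:Q]\le2$ and $Q$ fixes $\ell$ pointwise.

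Applying the lemma with $n=d$ and choosing a unit vector $x\in\ell$, we have $Q\le\mathrm{Stab}_\Gamma(x)$, so $v_p(|\mathrm{Stab}_\Gamma(x)|)\ge v_p(|Q|)=v_p(|P|)-v_p([P:Q])\ge v_p(|\Gamma|)-1$, with equality to $v_p(|\Gamma|)$ when $p$ is odd (as then $[P:Q]\le2$ is coprime to $p$). Consequently $v_p(|\Gamma.x|)=v_p(|\Gamma|)-v_p(|\mathrm{Stab}_\Gamma(x)|)$ is at most $1$ when $p=2$ and equals $0$ when $p$ is odd; in either case it is strictly smaller than $v_p(r)$, so $r\nmid|\Gamma.x|$, contradicting the first step. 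Note that the hypothesis $r\ge3$ is used only here, through the inequality $v_2(r)\ge2$ in the case of a $2$-power, and the oddness of $d$ is used only through the parity count of $1$-dimensional summands in the lemma. The step I expect to be the main obstacle is exactly that lemma: the content is that the odd-dimensional part of an orthogonal $p$-module is controlled entirely by $\{\pm1\}$-valued characters, so an odd-dimensional orthogonal $P$-module always contains a line stabilised by an index-$\le2$ subgroup. Once this is in place, the passage to orbits and the valuation bookkeeping are routine.
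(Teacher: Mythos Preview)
Your proof is correct and takes a genuinely different route from the paper's. The paper's argument is geometric/topological: it builds a $\Gamma$-invariant convex polytope $P$ as the convex hull of $\Gamma.\{\pm\B e_1,\ldots,\pm\B e_d\}$, observes that the face lattice of $\partial P$ gives a CW-structure on $\I S^{d-1}$, and uses that the Euler characteristic $\chi(\I S^{d-1})=2$ (since $d-1$ is even) forces the alternating sum of face counts to equal~$2$; as $r\ge 3$ does not divide~$2$, some level $\C C_i$ has $r\nmid |\C C_i|$, and the $\Gamma$-invariant set of normalised centroids of those faces then blocks any $\B\gamma$-division.

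Your argument instead works orbit by orbit and is representation-theoretic: after reducing to finding one orbit of size not divisible by~$r$, you pass to a Sylow $p$-subgroup $P$ and use that every real irreducible of a $p$-group of dimension greater than~$1$ is even-dimensional (for odd $p$ via the absence of non-trivial self-conjugate characters in odd-order groups; for $p=2$ via the fact that complex irreducible dimensions of $p$-groups are powers of~$p$, hence the realifications have dimensions in $\{1,2,4,\ldots\}$). The odd dimension $d$ then guarantees a $1$-dimensional summand, whose kernel has index at most~$2$ in $P$, producing a point with large stabiliser. The valuation bookkeeping is clean, and the split into the cases ``$r$ has an odd prime factor'' versus ``$r$ is a power of~$2$, hence $v_2(r)\ge 2$'' neatly handles the possible loss of one factor of~$2$.

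Both proofs use $d$ odd in an essential but different way (Euler characteristic versus parity of the number of $1$-dimensional summands) and both use $r\ge 3$ only at one spot (to ensure $r\nmid 2$ in the paper; to ensure $v_2(r)\ge 2$ in the $2$-power case for you). Your approach has the mild advantage of directly exhibiting a single obstructing orbit $\Gamma.x$, whereas the paper produces a $\Gamma$-invariant finite set $M_i$ that may be a union of orbits; the paper's approach, on the other hand, avoids Sylow theory and the classification of real irreducibles of $p$-groups, trading those for the Euler characteristic and elementary polytope geometry.
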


Some standard general results of Borel combinatorics (e.g.\ 
%this follows from 
Lemma~5.12 and Theorem~5.23 from~\cite{Pikhurko21bcc}) imply that if $\I S^{d-1}$ is $\B\gamma$-divisible and every orbit of the subgroup of $\SO(d)$ generated by $\gamma_1,\ldots,\gamma_r$ is finite, then there is a Borel $\B\gamma$-division. The following result gives that just one finite orbit is enough to convert a $\B\gamma$-division into a  measurable one.

\begin{proposition}
	\label{pr:FinOrbit} Let $d\ge 2$ and
	$\B\gamma=(\gamma_1,\ldots,\gamma_r)\in \SO(d)^r$. Let $\Gamma$ be the subgroup of
	$\SO(d)$ generated by $\gamma_1,\ldots,\gamma_r$. Suppose that there is
	$\B z\in\I S^{d-1}$ such that its $\Gamma$-orbit $\Gamma.\B z$ is finite. Then $\I S^{d-1}$ is $\B\gamma$-divisible if and only if $\I S^{d-1}$ is $\B\gamma$-divisible with
	measurable pieces.\end{proposition}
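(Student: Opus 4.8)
The plan is to reduce the measurable divisibility to the combinatorial one by transferring any $\B\gamma$-division to a finite orbit, where measurability is automatic, and then "spreading" it over the whole sphere using a measurable transversal of the orbit equivalence relation. One direction is trivial (a measurable division is a division), so assume $A \subseteq \I S^{d-1}$ gives a $\B\gamma$-division, i.e.\ $\gamma_1.A, \ldots, \gamma_r.A$ partition $\I S^{d-1}$. The key object is the $\Gamma$-action on $\I S^{d-1}$. Because $\Gamma . \B z$ is finite, the stabiliser $\Gamma_{\B z}$ has finite index in $\Gamma$, hence $\Gamma$ is the union of finitely many cosets of $\Gamma_{\B z}$; the point of this hypothesis is that $\Gamma$ then admits a finite quotient that "sees" enough of the combinatorics. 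First I would restrict attention to the restriction of the division to the finite set $Y := \Gamma.\B z$: since $Y$ is $\Gamma$-invariant, the sets $A \cap Y, \ldots$ together with the action of $\gamma_1,\ldots,\gamma_r$ on the finite set $Y$ record a $\B\gamma$-division of $Y$ (in the obvious finite sense), which is a purely finite, hence certainly measurable, piece of data.

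Next I would build a measurable $\B\gamma$-division of all of $\I S^{d-1}$ out of this finite template. The $\Gamma$-orbit equivalence relation $E_\Gamma$ on $\I S^{d-1}$ is a countable Borel equivalence relation (as $\Gamma$ is countable), and moreover every orbit is a finite union of $\Gamma_{\B z}$-orbits of the "same shape" as $Y$; in particular the map sending $\B x$ to the finite set $\{\B x' \in \Gamma.\B x : \ \ldots\}$ is Borel. I would select, in a Borel way, for each orbit a distinguished finite subset combinatorially isomorphic to $Y$ as a $\Gamma$-set (using that $\Gamma$ acts with the same cycle structure on a cofinite-index sublattice of each orbit — this is where finiteness of one orbit forces finiteness of a controlled part of every orbit, via the index of $\Gamma_{\B z}$). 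Concretely: pick a Borel transversal $T$ for $E_\Gamma$ (it exists since $E_\Gamma$ is countable Borel), and define $A' \subseteq \I S^{d-1}$ by declaring, for each $t \in T$ and each $\delta$ in a fixed set of coset representatives of $\Gamma_{\B z}$ in $\Gamma$, that $\delta.t \in A'$ if and only if $\delta.\B z \in A$. Since the division of the finite set $Y$ by $\gamma_1,\ldots,\gamma_r$ is a genuine partition, the translates $\gamma_1.A', \ldots, \gamma_r.A'$ partition each orbit, hence partition $\I S^{d-1}$; and $A'$ is Borel (a countable union of Borel pieces indexed by $T$ and the finitely many $\delta$), in particular $\mu$-measurable.

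The main obstacle I anticipate is the bookkeeping that makes the transfer from the finite orbit $Y$ to an arbitrary orbit well-defined: one must check that each orbit really does split into finitely many $\Gamma_{\B z}$-orbits all isomorphic (as $\Gamma$-sets) to $Y$, so that the recipe "copy the pattern on $Y$" is unambiguous and $\Gamma$-equivariantly consistent. This follows because $[\Gamma : \Gamma_{\B z}] = |Y| < \infty$, so for any $\B x$ the orbit $\Gamma.\B x$ is a quotient of the finite $\Gamma$-set $\Gamma/\Gamma_{\B z} \cong Y$, and the division of $Y$ descends to any such quotient; one should verify that the induced family on $\Gamma.\B x$ is still a partition (this is automatic, as a $\Gamma$-equivariant image of a partition of $Y$ is a partition) and that the assignment $\B x \mapsto$ (its pattern) is Borel, which is routine once a Borel transversal and Borel coset-representative selectors are fixed. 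Everything else — existence of a Borel transversal for a countable Borel equivalence relation, Borel measurability of the resulting set, and the fact that a Borel set is $\mu$-measurable — is standard.
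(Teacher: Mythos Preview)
Your argument has a genuine gap at its core: the existence of one finite $\Gamma$-orbit does \emph{not} force every $\Gamma$-orbit to be a quotient of $Y=\Gamma.\B z$, nor even to be finite. The orbit $\Gamma.\B x$ is $\Gamma$-equivariantly isomorphic to $\Gamma/\Gamma_{\B x}$, and this is a quotient of $\Gamma/\Gamma_{\B z}\cong Y$ only when (a conjugate of) $\Gamma_{\B x}$ contains $\Gamma_{\B z}$; there is no reason for this. A concrete counterexample (which the paper itself mentions): take $d=4$ and let $\Gamma$ be generated by a single block-diagonal matrix whose first $2\times 2$ block is a rotation of order~$3$ and whose second block is an irrational rotation. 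Then $\B z=(1,0,0,0)$ has a $3$-element orbit, but $\B x=(0,0,1,0)$ has an infinite orbit. Your recipe ``for each $t\in T$ and each coset representative $\delta$, put $\delta.t\in A'$ iff $\delta.\B z\in A$'' then only labels finitely many points of the infinite orbit $\Gamma.\B x$, and is moreover ill-defined when $\delta_1.t=\delta_2.t$ with $\delta_1.\B z\ne\delta_2.\B z$. Relatedly, your assertion that a Borel transversal for $E_\Gamma$ exists ``since $E_\Gamma$ is countable Borel'' is false in general: countable Borel equivalence relations need not be smooth (an irrational rotation of $\I S^1$ already fails).

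The paper's proof avoids this by exploiting the linear structure rather than working orbit-by-orbit. It shows that the linear span $L_{\B z}\subseteq\I R^d$ of $\Gamma.\B z$ and its orthogonal complement $L_{\B z}^\perp$ are both $\Gamma$-invariant, and that \emph{on the sphere $\I S^{m-1}\subseteq L_{\B z}$ every $\Gamma$-orbit is finite} (indeed of size at most $|\Gamma.\B z|!$, since any $\B x\in L_{\B z}$ is a fixed linear combination of $\B z_1,\dots,\B z_n$ and $\Gamma$ merely permutes these). On this finite-orbit part one does get a Borel $\B\gamma$-division by a local rule; this is then extended cylindrically over $L_{\B z}^\perp$ to a Borel set on $\I S^{d-1}\setminus(\Zero\times\I S^{d-m-1})$. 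The remaining sphere $\Zero\times\I S^{d-m-1}$ sits inside a proper linear subspace, hence has $\mu$-measure zero, and there one simply uses the original (possibly non-measurable) division. The finite orbit is thus used not to pattern every orbit, but to carve out a $\Gamma$-invariant subspace on which the action is genuinely finite.
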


Of course, this leaves a wide range of unresolved cases. As an initial partial step, we completely characterise those $r$-tuples of rotations for which the circle $\I S^1$ is divisible with measurable pieces for~$r\le 3$.
% and, in some sense, also for $r=4$ (via Claim~\ref{cl:4m}).\medskip
% (in Section~\ref{se:d=2}). %Unfortunately, already the case $r=5$ seems to be messy.

%Variations of divisibility: dividing into pieces that are topologically homeomorphic to each other (Taniyama~\cite{Taniyama02geo})

\medskip
This paper is organised as follows. In Section~\ref{se:Harmonics} we give a quick overview of basic definitions and facts about spherical harmonics and use these to prove Theorem~\ref{th:main}, which is the main result of this paper. Proposition~\ref{pr:Euler} is proved in Section~\ref{se:Euler} using Euler's characteristic.  
Propositions~\ref{pr:FinOrbit} and~\ref{pr:Baire} are proved in Sections~\ref{se:FinOrbit} and \ref{se:Baire} respectively. In Section~\ref{se:HigherDim}  we describe the standard construction of how an $r$-division of $\Sd$ can be lifted to $\I S^{d+1}$ and observe that this gives   measurable pieces (Lemma~\ref{lm:extend}). 
In Section~\ref{se:d=2} we study  various versions of measurable divisibility when $d=2$; in particular, we characterise $r$-tuples $\B\gamma\in \SO(2)^r$ for which the circle $\I S^1$ is $\B\gamma$-divisible with measurable pieces for $r\le 3$. 
The rather technical Section~\ref{se:G} is dedicated to proving Lemmas~\ref{lm:GNew} and~\ref{lm:GPChar}. 
 Section~\ref{se:AlgGeo}
presents some basics of algebraic geometry.
% that we will need in the remaining part. 
In Section~\ref{se:SO} we prove some results about $\SO(d)^r$  and use them to prove Lemma~\ref{lm:GNew}. In particular, we show that the variety $\SO(d)^r\subseteq \I R^{d^2r}$ is irreducible and the entries above the diagonals form a transcendence basis for its function field. While these results are fairly standard, we present their proofs since we could not find any published statements that suffice for our purposes. 
%Using these results, we prove Lemma~\ref{lm:GNew}. 
In Section~\ref{se:aux} we prove an auxiliary lemma from algebraic geometry and use it to derive Lemma~\ref{lm:GPChar}.

\section{Spherical harmonics}\label{se:Harmonics}

Let an integer $d\ge 2$ be fixed throughout this section.

For an introduction to spherical harmonics on $\I S^{d-1}$ we refer to the book by Groemer~\cite{Groemer96gafssh} whose notation we generally follow. Recall that $\mu$ denotes the spherical measure on~$\I S^{d-1}$. Thus the total measure of the sphere is 
 $$
 \sigma_d:=\mu(\I S^{d-1})=\frac{2\pi^{d/2}}{\Gamma(d/2)}.
 $$
  As $d$ is fixed, the dependence on $d$ is usually not mentioned except for $\sigma_d$ (since $\sigma_{d-1}$ will also appear in some formulas). Also, the shorthand \emph{a.e.}\ stands for $\mu$-almost everywhere.

 By \gr{Lemma~1.3.1}, the density of the push-forward of $\mu$ under the projection to any coordinate axis is
\beq{eq:rho}
\rho(t):=\left\{\begin{array}{ll}
 \sigma_{d-1}\,(1-t^2)^{(d-3)/2}, & -1<t<1,\\
 0,& \mbox{otherwise.}
 \end{array}\right.
 \eeq
 
A polynomial $p\in\I R[\B x]$, $\B x=(x_1,\ldots,x_d)$, is called \emph{harmonic} if $\Laplace p=0$, where $$\Laplace:=\frac{\partial^2}{\partial x_1^2}+\ldots+\frac{\partial^2}{\partial x_d^2}$$ is the \emph{Laplace operator}. A \emph{spherical harmonic} is a function from $\Sd$ to the reals which is the restriction to $\I S^{d-1}$ of a harmonic polynomial on~$\I R^d$. Let $\C H$ be the vector space of all spherical harmonics.
For an integer $n\ge 0$, let $\C H_n\subseteq \C H$ be the linear subspace consisting of all functions $f:\I S^{d-1}\to \I R$ that are the restrictions to $\I S^{d-1}$ of some harmonic polynomial $p$ which is homogeneous of degree~$n$, where we regard the zero polynomial as homogeneous of any degree. By \gr{Lemma~3.1.3}, the polynomial $p$ is uniquely determined by $f\in\C H_n$, so we may switch between these two representations without mention. It can be derived from this (\gr{Theorem 3.1.4}) that the dimension of $\C H_n$ is 
\beq{eq:Nn}
 N_n:={d+n-1\choose n}-{d+n-3\choose n-2},
 \eeq
 where we agree that ${d+n-3\choose n-2}=0$ for $n=0$ or~$1$.
 
Let $\langle \cdot,\cdot\rangle$ denote the scalar product on $L^2(\I S^{d-1},\mu)$ (while $\B x\cdot\B y:=\sum_{i=1}^d x_iy_i$ denotes the scalar product of $\B x,\B y\in\I R^d$). 
It is known (\gr{Theorem 3.2.1}) that
\beq{eq:ortho}
\langle f,g\rangle =0,\quad\mbox{for all $f\in\C H_i$ and $g\in \C H_j$ with $i\not=j$},
\eeq
 that is,
 $\C H_0,\C H_1,\ldots$ are pairwise orthogonal subspaces
of $\C H\subseteq L^2(\I S^{d-1},\mu)$. 
%Thus we have that
% \beq{eq:Oplus} \C H=\oplus_{n=1}^\infty\C H_n.\eeq
 Note that the group $\SO(d)$ acts naturally on $L^2(\I S^{d-1},\mu)$ via the \emph{shift action} 
\beq{eq:Action}
 (\gamma.f)(\B v):=f(\gamma^{-1}.\B v),\quad \mbox{for }\gamma\in \SO(d),\ f\in L^2(\I S^{d-1},\mu),\ \B v\in\I S^{d-1}.
 \eeq
 Each space $\C H_n$ is invariant under this action (\gr{Proposition 3.2.4}) since, on $\I R^d$, 
 rotations preserve both the Laplace operator as well as the set of homogeneous degree-$n$ polynomials.

An important role is played by the \emph{Gegenbauer polynomials} $(P_0,P_1,\ldots)$ which are obtained from $(1,t,t^2,\ldots)$ by the Gram-Schmidt orthonormalization process on $L^2([-1,1],\rho(t)\dd t)$, except they are normalised to assume value 1 at $t=1$ (instead of being unit vectors in the $L^2$-norm). In the special case $d=3$ (when $\rho$ is the constant function), we get the \emph{Legendre polynomials}.
Of course, the degree of $P_n$ is exactly~$n$. Let us collect some of their standard properties that we will use.

\begin{lemma}\label{lm:P} For every integer $n\ge 0$ the following holds.
\begin{enumerate}[(i),nosep]
 \item\label{it:PQ} The polynomial $P_n$ has rational coefficients.
 \item\label{it:PHarm} For every $\B v\in\I S^{d-1}$, the function $P_n^{\B v}:\I S^{n-1}\to \I R$, defined by 
 \beq{eq:Pnv}
  P_n^{\B v}(\B x):=P_n(\B v\cdot \B x),\quad \mbox{for }\B x\in\I S^{d-1},
  \eeq
   belongs to $\C H_n$.
   \item\label{it:PBasis} There is a choice of $\B v_1,\ldots,\B v_{N_n}\in\I S^{d-1}$ such that the functions $P_n^{\B v_i}$, $i\in [N_n]$, form a basis of the vector space $\C H_n$.
   \item\label{it:PScalar} For every $\B u,\B v\in\I S^{d-1}$, we have $\langle P_n^{\B u},P_n^{\B v}\rangle =\frac{\sigma_d}{N_n} P_n(\B u\cdot \B v)$.
 \end{enumerate}
\end{lemma}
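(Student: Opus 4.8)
The plan is to obtain parts (ii)--(iv) from the \emph{addition theorem for spherical harmonics} and to prove (i) by a direct rationality computation for the moments of the weight $\rho$.

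Recall the addition theorem (see \gr{Theorem~3.3.3}): for any orthonormal basis $Y_1,\ldots,Y_{N_n}$ of $\C H_n$ one has
\[
\sum_{i=1}^{N_n} Y_i(\B u)\,Y_i(\B v)=\frac{N_n}{\sigma_d}\,P_n(\B u\cdot\B v),\qquad \B u,\B v\in\I S^{d-1}.
\]
(The left-hand side is the reproducing kernel of $\C H_n$; it is $\SO(d)$-invariant by \gr{Proposition~3.2.4}, hence depends only on $\B u\cdot\B v$ and is constant on the diagonal, and integrating $\sum_i Y_i(\B v)^2$ over $\I S^{d-1}$ shows this constant equals $N_n/\sigma_d$; the normalisation $P_n(1)=1$ is exactly what makes the constant in the displayed identity come out to $N_n/\sigma_d$.) Fixing $\B v$, the left-hand side lies in $\C H_n$, which is (ii). Rewriting the identity as $P_n^{\B v}=\frac{\sigma_d}{N_n}\sum_i Y_i(\B v)\,Y_i$, we get for every $f=\sum_i c_iY_i\in\C H_n$ that $\langle f,P_n^{\B v}\rangle=\frac{\sigma_d}{N_n}\sum_i c_iY_i(\B v)=\frac{\sigma_d}{N_n}f(\B v)$. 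Taking $f=P_n^{\B u}$ and using $P_n^{\B u}(\B v)=P_n(\B u\cdot\B v)$ gives (iv). Taking any $f\in\C H_n$ orthogonal to all $P_n^{\B v}$ forces $f(\B v)=0$ for every $\B v$, i.e.\ $f=0$; so $\{P_n^{\B v}:\B v\in\I S^{d-1}\}$ spans the $N_n$-dimensional space $\C H_n$, and extracting a basis $P_n^{\B v_1},\ldots,P_n^{\B v_{N_n}}$ from this spanning set gives (iii).

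For (i), observe that the $L^2$-normalisation in the Gram--Schmidt process cancels when one rescales to value $1$ at $t=1$, so $P_n=q_n/q_n(1)$, where $q_0,q_1,\ldots$ are the \emph{monic} polynomials obtained from $1,t,t^2,\ldots$ by Gram--Schmidt with respect to the inner product $\langle f,g\rangle_\rho:=\int_{-1}^1 f(t)g(t)\rho(t)\dd t$. By induction on $n$ it suffices to show the Gram--Schmidt coefficients $\langle t^n,q_j\rangle_\rho/\langle q_j,q_j\rangle_\rho$ are rational; each such inner product is a $\I Q$-linear combination of the moments $\mu_k:=\int_{-1}^1 t^k(1-t^2)^{(d-3)/2}\dd t$ (the factor $\sigma_{d-1}$ in $\rho$ cancels in the ratio), so it is enough that $\mu_k/\mu_0\in\I Q$ for all $k$. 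This holds: $\mu_k=0$ for odd $k$, and for $k=2m$ the substitution $u=t^2$ gives $\mu_{2m}=B(m+\tfrac12,\tfrac{d-1}{2})$, whence
\[
\frac{\mu_{2m}}{\mu_0}=\frac{\Gamma(m+\tfrac12)}{\Gamma(\tfrac12)}\cdot\frac{\Gamma(\tfrac d2)}{\Gamma(m+\tfrac d2)}=\prod_{j=0}^{m-1}\frac{2j+1}{2j+d}\in\I Q .
\]
Hence $q_n\in\I Q[t]$, and $q_n(1)\in\I Q\setminus\{0\}$ (nonzero because the zeros of $q_n$, being those of an orthogonal polynomial on $[-1,1]$, lie in $(-1,1)$), so $P_n\in\I Q[t]$. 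Alternatively, one can simply quote the three-term recurrence $P_{n+1}(t)=\tfrac{2n+d-2}{n+d-2}\,tP_n(t)-\tfrac{n}{n+d-2}\,P_{n-1}(t)$ with $P_0=1$, $P_1=t$, and induct.

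Nothing here is deep; the one point to watch in (i) is that $\mu_{2m}/\mu_0$ is rational even when $d$ is odd, where half-integer values of $\Gamma$ occur but cancel in the telescoping product above, and in (ii)--(iv) one should check that Groemer's normalisation of the Gegenbauer polynomial matches $P_n(1)=1$ (rescaling if not). The only statement imported wholesale from the literature is the addition theorem; if a self-contained treatment of (ii) is wanted, it can be had from the harmonic decomposition of $(\B v\cdot\B x)^n$ into spherical harmonics, together with the one-dimensionality of the subspace of $\C H_n$ fixed by the stabiliser of $\B v$.
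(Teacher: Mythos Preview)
Your argument is correct. For (ii) both you and the paper invoke \gr{Theorem~3.3.3}, and for (i) both mention the three-term recurrence (the paper additionally cites Rodrigues' formula, while you supply a self-contained moment computation as a first alternative). The organisation differs for (iii) and (iv): the paper cites separate results --- \gr{Theorem~3.3.14} (or, alternatively, the irreducibility of $\C H_n$ under $\SO(d)$ from \gr{Theorem~3.3.4}) for (iii), and the Funk--Hecke formula \gr{Theorem~3.4.1} together with the norm computation \gr{Proposition~3.3.6} for (iv) --- whereas you derive both directly from the reproducing-kernel identity $\langle f,P_n^{\B v}\rangle=\frac{\sigma_d}{N_n}f(\B v)$, itself an immediate consequence of the addition theorem. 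Your route is a bit more economical in that one imported identity yields (ii)--(iv) simultaneously; the paper's route has the minor advantage that each part stands on its own citation, so a reader needing only one item can look it up directly.
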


\begin{proof} Part~\ref{it:PQ} follows from the formula of Rodrigues (\gr{Proposition 3.3.7}) that provides an explicit expression for $P_n$, or from the standard recurrence relation that writes $P_{n+1}$ in terms of $P_{n}$ and $P_{n-1}$ for $n\ge 0$ (\gr{Proposition 3.3.11}) together with the initial values $P_{-1}(t):=0$ and~$P_0(t)=1$.

Part~\ref{it:PHarm}, namely the claim that each $P_n^{\B v}$ is in $\C H_n$, is one of the statements of \gr{Theorem 3.3.3}. 

Part~\ref{it:PBasis} is the content of~\gr{Theorem 3.3.14}. Alternatively, notice that under the action in~\eqref{eq:Action} we have for every $\B u,\B v\in\I S^{d-1}$  and $\gamma\in\SO(d)$ that $(\gamma.P_n^{\B v})(\B u)=P_n(\B v\cdot (\gamma^{-1}.\B u))=P_n((\gamma.\B v)\cdot \B u)$, that is, 
\beq{eq:gammaP}
\gamma.P_n^{\B v}=P_n^{\gamma.\B v}.
\eeq
 Thus the linear span of $P_n^{\B v}$, $\B v\in\I S^{d-1}$, is a non-zero $\SO(d)$-invariant subspace of $\C H_n$. By \gr{Theorem 3.3.4}, the only such subspace is $\C H_n$ itself, giving the required. 

Part~\ref{it:PScalar} follows from 
$$
\langle P_n^{\B u},P_n^{\B v}\rangle=\left(\int_{-1}^1 (P_n(t))^2\rho(t)\dd t\right) P_n(\B u\cdot \B v)=\frac{\sigma_d}{N_n}P_n(\B u\cdot \B v),
$$
where the first equality is a special case of the Funk-Hecke Formula (\gr{Theorem 3.4.1}) and the second equality (which by~\eqref{eq:rho} amounts to computing  the $L^2$-norm of any $P_n^{\B u}\in L^2(\I S^{d-1},\mu)$) is proved in \gr{Proposition 3.3.6}.\end{proof}

We need the following strengthening of Lemma~\ref{lm:P}.\ref{it:PBasis}, where we additionally require that the vectors $\B v_i$ are rational.  
%Recall that $\QQ$ denotes the algebraic closure of the field of rational numbers.

\begin{lemma}\label{lm:PBasis} For every integer $n\ge 0$, there is a choice of $\B v_1,\ldots,\B v_{N_n}\in\I S^{d-1}\cap\I Q^d$ such that the functions $P_n^{\B v_i}$, $i\in [N_n]$, form a basis of the vector space $\C H_n$.
\end{lemma}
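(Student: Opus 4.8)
The plan is to start from the basis of $\C H_n$ given by Lemma~\ref{lm:P}.\ref{it:PBasis}, consisting of functions $P_n^{\B w_1},\ldots,P_n^{\B w_{N_n}}$ for some $\B w_1,\ldots,\B w_{N_n}\in\I S^{d-1}$ (not necessarily rational), and then perturb each $\B w_i$ slightly to a nearby \emph{rational} point $\B v_i\in\I S^{d-1}\cap\I Q^d$. The key observation is that whether $N_n$ given vectors $P_n^{\B v_1},\ldots,P_n^{\B v_{N_n}}$ form a basis of the $N_n$-dimensional space $\C H_n$ is detected by the nonvanishing of the Gram determinant $\det\big(\langle P_n^{\B v_i},P_n^{\B v_j}\rangle\big)_{i,j\in[N_n]}$, which by Lemma~\ref{lm:P}.\ref{it:PScalar} equals $(\sigma_d/N_n)^{N_n}\det\big(P_n(\B v_i\cdot\B v_j)\big)_{i,j}$. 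Define $D(\B u_1,\ldots,\B u_{N_n}):=\det\big(P_n(\B u_i\cdot\B u_j)\big)_{i,j}$; this is a polynomial function of the coordinates of $\B u_1,\ldots,\B u_{N_n}$ (since $P_n$ has polynomial, in fact rational, coefficients by Lemma~\ref{lm:P}.\ref{it:PQ}), hence continuous on $(\I S^{d-1})^{N_n}$.

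First I would note that $D(\B w_1,\ldots,\B w_{N_n})\ne 0$ because the $P_n^{\B w_i}$ form a basis. By continuity of $D$, there is an open neighbourhood $U_i$ of each $\B w_i$ in $\I S^{d-1}$ such that $D$ stays nonzero on $U_1\times\cdots\times U_{N_n}$. Since $\I S^{d-1}\cap\I Q^d$ is dense in $\I S^{d-1}$ (e.g.\ via stereographic projection from a rational point, rational points of the sphere are dense), I can pick $\B v_i\in U_i\cap\I Q^d\cap\I S^{d-1}$ for each $i$. Then $D(\B v_1,\ldots,\B v_{N_n})\ne 0$, so the Gram matrix $\big(\langle P_n^{\B v_i},P_n^{\B v_j}\rangle\big)$ is nonsingular, which forces the $N_n$ functions $P_n^{\B v_i}$ to be linearly independent in $\C H_n$; as $\dim\C H_n=N_n$ by~\eqref{eq:Nn}, they form a basis. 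The case $n=0$ is trivial ($N_0=1$, $P_0\equiv 1$, any single rational point works), and for $n=1$ one can also just exhibit the standard rational basis directly, but the general argument covers all $n$ uniformly.

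The only step that requires a small amount of care is the density of $\I S^{d-1}\cap\I Q^d$ in $\I S^{d-1}$, which I would justify by the standard stereographic parametrisation: projecting from the north pole $\B e_d$ gives a bijection between $\I S^{d-1}\setminus\{\B e_d\}$ and $\I R^{d-1}$ that sends rational points to rational points in both directions, and $\I Q^{d-1}$ is dense in $\I R^{d-1}$. I expect no genuine obstacle here; the main (and essentially only) idea is the reduction of "forms a basis" to the nonvanishing of a polynomial (the Gram/Gegenbauer determinant), which is an open condition, combined with density of rational points.
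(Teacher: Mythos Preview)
Your proof is correct and rests on the same core ingredients as the paper's: the identification of linear independence with nonvanishing of the Gram determinant, Lemma~\ref{lm:P}.\ref{it:PScalar} to rewrite that determinant as $\det\big(P_n(\B v_i\cdot\B v_j)\big)$ (a polynomial, hence continuous, function of the points), the existence of some basis from Lemma~\ref{lm:P}.\ref{it:PBasis}, and the density of $\I S^{d-1}\cap\I Q^d$ (the paper isolates this as Claim~\ref{ls:DenseInS}). The only difference is in packaging: the paper builds the rational basis greedily one vector at a time and argues by contradiction that the process cannot stall before $N_n$ vectors, whereas you perturb all $N_n$ points simultaneously from a given (real) basis; your version is marginally more direct, but the two arguments are essentially the same.
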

\begin{proof} We pick $\B v_i$ in $\I S^{d-1}\cap\I Q^d$ one by one as long as possible so that the corresponding functions $P_n^{\B v_i}$ are linearly independent as elements of $\C H_n$. 
Let this procedure produce $\B v_1,\ldots,\B v_\ell$. Suppose that $\ell<{N_n}$ as otherwise
we are done. Let $\B v_{\ell+1}=\B x$, with $\V x=(x_1,\ldots,x_d)\in\I S^{d-1}$ being viewed as a vector of unknown variables. Consider the $(\ell+1)\times (\ell+1)$ matrix $M=M(\B x)$ with entries
\beq{eq:M}
M_{ij}:=\frac1{\sigma_{d}} \langle P_n^{\B v_i},P_n^{\B v_j}\rangle,\quad \mbox{for }i,j\in [\ell+1].
\eeq
In other words,  $\sigma_{d} M$ is the Gram matrix
of the vectors $P_n^{\B v_1},\ldots,P_n^{\B v_{\ell+1}}\in L^2(\I S^{d-1},\mu)$. In particular, the determinant $\det(M)$  of $M$ is 0 if and only if
$P_n^{\B v_{\ell+1}}$ is in the span of the (linearly independent) vectors~$P_n^{\B v_1},\ldots,P_n^{\B v_\ell}$ (by e.g.\ \cite[Theorem~7.2.10]{HornJohnson13ma}).

By Lemma~\ref{lm:P}.\ref{it:PScalar} we have that $M_{ij}=\frac1{N_n}P_n(\B v_i\cdot \B v_j)$. Thus the determinant of $M$ is a polynomial function of~$\B x$.
%Thus by Lemma~\ref{lm:P}.\ref{it:PQ}, the determinant of $M$ is a polynomial function in $\B x$ with all coefficients in $\I Q$. 
%By Items~\ref{it:PQ} and \ref{it:PScalar} of Lemma~\ref{lm:P}, the determinant $\det M$,  of $M$, is a polynomial function in $\B x$ with all coefficients in $\I Q$. 

By Lemma~\ref{lm:P}.\ref{it:PBasis} and $\ell<N_d$ (and the linear independence of $P_n^{\B v_1},\ldots,P_n^{\B v_\ell}$), there is some choice of $\B v_{\ell+1}\in\I S^{d-1}$ with $\det(M)\not=0$. 
That is, the polynomial $\det(M)$ is not identically zero on~$\I S^{d-1}$.

We  need the following easy claim that can be proved, for example, by induction on $d\ge 2$ with the base case $d=2$ following from $\I S^1$ containing all points of the form $\frac{1}{m^2+n^2}(m^2-n^2,2mn)$ for $(m,n)\in\I Z^2\setminus\{\,(0,0)\,\}$.

\begin{claim}\label{ls:DenseInS} For every $d\ge 1$, the set $\I S^{d-1}\cap\I Q^d$ of the points on the sphere with all coordinates rational is dense in $\I S^{d-1}$ with respect to the standard topology on the sphere (i.e.\ the one inherited from the Euclidean space $\I R^d\supseteq \I S^{d-1}$).\ecpf
	\end{claim}

\hide{
\bcpf We use induction on~$d$. If $d=1$, then $\I S^0=\{-1,1\}$ and the claim is trivially true. The validity of the case $d=2$ follows from $\I S^1$ containing all points of the form $\frac{1}{m^2+n^2}(m^2-n^2,\pm 2mn)$ for integers $m,n>0$. Indeed, in order to approximate an arbitrary $(x,y)\in\I S^1$ with e.g.\ $x,y\ge 0$ we just need that $\alpha:=n/m$ approximates well
$(\frac{1-x}{1+x})^{1/2}$ as then $\frac1{1+\alpha^2}(1-\alpha^2,2\alpha)\in \I S^1$ is close to $(x,y)$.

Let $d\ge 3$ and take an arbitrary point $\B x=(x_1,\ldots,x_d)$ of~$\I S^{d-1}$.
% Since $X$ is invariant under changing signs of individual coordinates, assume that $x_i\ge 0$ for each $i\ge 1$. 
Assume that,  for each $i\in [d]$, we have $x_i\not=0$ for otherwise we can approximate $\B x$ within the $(d-2)$-dimensional unit sphere $\I S^{d-1}\cap \{\B z\in \I R^d\mid z_i=0\}$ by induction. To approximate $\B x$, take any $(y_1,y_2)\in \I S^2\cap \I Q^2$ with $y_1^2/y_2^2$ approximating well $\alpha/\beta$ where $\alpha:=\sum_{i=1}^{d-2} x_i^2>0$ and $\beta:=x_{d-1}^2+x_{d}^2>0$ and let 
$$
\B z:=y_1 (\B v_1,0,0)+y_2(0,\ldots,0,\B v_2)\in \I S^{d-1}\cap \I Q^d,
$$ where $\B v_1\in\I S^{d-3}$ and $\B v_2\in\I S^2$ are rational vectors that approximate well the unit vectors $\alpha^{-1/2}\,(x_1,\ldots,x_{d-2})$ and $\beta^{-1/2}\, (x_{d-1},x_d)$ respectively.\ecpf
}

Since $\det(M)$, as a polynomial function  of $\V x\in \I S^{d-1}$,
is continuous and not identically zero, it has to be non-zero on some point $\B x$ of the dense subset~$\I S^{d-1}\cap\I Q^d$. Thus, if we let $\B v_{\ell+1}$ to be such a vector $\B x$, then the functions
$P_n^{\B v_1},\ldots,P_n^{\B v_{\ell+1}}\in L^2(\I S^{d-1},\mu)$ are linearly independent. This contradiction to the maximality of $\B v_1,\ldots,\B v_\ell$ proves the lemma.\end{proof}

For an integer $n\ge 0$, an $r$-tuple $\B\gamma=(\gamma_1,\ldots,\gamma_r)\in\SO(d)^r$ and a unit vector $\B v\in\I S^{d-1}$ define
\beq{eq:Gv}
G^{\B v}_{n,\B\gamma}:= \sum_{i=1}^r P_n^{\gamma_i^{-1}.\B v}.
\eeq
 By Lemma~\ref{lm:P}.\ref{it:PHarm}, each function $G^{\B v}_{n,\B\gamma}:\I S^{d-1}\to\I R$, as a linear combination of some spherical harmonics $P_n^{\gamma_i^{-1}.\B v}\in\C H_n$, is itself in $\C H_n$. 

\begin{lemma}\label{eq:GenericGammas} If $\B\gamma\in\SO(d)^r$ is generic then, for every integer $n\ge 0$, the linear span of $\{G^{\B v}_{n,\B\gamma}\mid \B v\in\I S^{d-1}\}$ is the whole space $\C H_n$.
 \end{lemma}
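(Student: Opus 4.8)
The plan is to pass to the dual picture. Since $\C H_n$ is finite-dimensional and $\SO(d)$-invariant, the operator $T_{\B\gamma}\colon\C H_n\to\C H_n$, $h\mapsto\sum_{i=1}^r\gamma_i.h$, is well defined, and the span of $\{G^{\B v}_{n,\B\gamma}\mid\B v\in\I S^{d-1}\}$ equals $\C H_n$ if and only if its orthogonal complement inside $\C H_n$ is trivial. First I would record the reproducing-kernel identity $\langle P_n^{\B u},h\rangle=\frac{\sigma_d}{N_n}\,h(\B u)$ for all $h\in\C H_n$ and $\B u\in\I S^{d-1}$; this is immediate from Lemma~\ref{lm:P}.\ref{it:PScalar} when $h=P_n^{\B v}$, and extends to all $h$ by linearity using Lemma~\ref{lm:P}.\ref{it:PBasis}. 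Applying it to the summands of $G^{\B v}_{n,\B\gamma}=\sum_i P_n^{\gamma_i^{-1}.\B v}$ gives $\langle G^{\B v}_{n,\B\gamma},h\rangle=\frac{\sigma_d}{N_n}\sum_{i=1}^r h(\gamma_i^{-1}.\B v)=\frac{\sigma_d}{N_n}\,(T_{\B\gamma}h)(\B v)$, so $h$ is orthogonal to every $G^{\B v}_{n,\B\gamma}$ precisely when $T_{\B\gamma}h=0$. Thus it suffices to show $\det T_{\B\gamma}\neq0$ for generic $\B\gamma$.

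Next I would turn this into a polynomial condition with \emph{rational} coefficients in the $d^2r$ matrix entries of $\B\gamma$; this is exactly the point where Lemma~\ref{lm:PBasis} (rather than just Lemma~\ref{lm:P}.\ref{it:PBasis}) is needed. Fix rational unit vectors $\B v_1,\ldots,\B v_{N_n}\in\I S^{d-1}\cap\I Q^d$ with $P_n^{\B v_1},\ldots,P_n^{\B v_{N_n}}$ a basis of $\C H_n$, and let $A$ be the matrix $A_{lk}:=P_n(\B v_k\cdot\B v_l)$. By Lemma~\ref{lm:P}.\ref{it:PScalar}, $\frac{\sigma_d}{N_n}A$ is the Gram matrix of this basis, so $A$ is invertible, and by Lemma~\ref{lm:P}.\ref{it:PQ} it has rational entries, whence $\det A$ is a nonzero rational. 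Using $\gamma.P_n^{\B v}=P_n^{\gamma.\B v}$ (equation~\eqref{eq:gammaP}) together with Lemma~\ref{lm:P}.\ref{it:PScalar}, I compute $\langle T_{\B\gamma}P_n^{\B v_j},P_n^{\B v_l}\rangle=\frac{\sigma_d}{N_n}B_{lj}$ where $B_{lj}:=\sum_{i=1}^r P_n\big(\B v_l\cdot(\gamma_i.\B v_j)\big)$; hence the matrix of $T_{\B\gamma}$ in the basis $(P_n^{\B v_j})_j$ is $A^{-1}B$, so $\det T_{\B\gamma}=\det B/\det A$. Since $P_n$ has rational coefficients (Lemma~\ref{lm:P}.\ref{it:PQ}) and the $\B v_j$ are rational, $p(\B\gamma):=\det B$ is a polynomial with rational coefficients in the entries of $\B\gamma$, and $T_{\B\gamma}$ is invertible if and only if $p(\B\gamma)\neq0$.

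Finally I would invoke genericity. If $\B\gamma$ were generic but the span were a proper subspace of $\C H_n$, then $p(\B\gamma)=0$, so by the definition of generic $p$ would vanish identically on $\SO(d)^r$. But at the point $\B\beta:=(I_d,\ldots,I_d)\in\SO(d)^r$ we have $G^{\B v}_{n,\B\beta}=rP_n^{\B v}$, equivalently $T_{\B\beta}=r\cdot\mathrm{id}_{\C H_n}$, so $p(\B\beta)=\det A\cdot\det T_{\B\beta}=r^{N_n}\det A\neq0$ --- a contradiction. I expect the only real subtlety to be this bookkeeping of rationality (routing everything through the rational basis of Lemma~\ref{lm:PBasis}, so that the definition of ``generic'', which only controls rational-coefficient polynomials, applies); the rest is elementary linear algebra and the reproducing-kernel computation.
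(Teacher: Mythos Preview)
Your proof is correct and follows essentially the same approach as the paper: both reduce the claim to the nonvanishing of a rational-coefficient polynomial $\det B$ where $B_{lj}=\sum_{i}P_n(\B v_l\cdot(\gamma_i.\B v_j))$ for the rational basis of Lemma~\ref{lm:PBasis}, and both verify nonvanishing at $\B\beta=(I_d,\ldots,I_d)$. Your packaging via the operator $T_{\B\gamma}$ and the reproducing-kernel identity is a clean conceptual reformulation, but the underlying matrix and the use of genericity are identical to the paper's argument.
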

\begin{proof} By Lemma~\ref{lm:PBasis}, we can fix some vectors $\B v_1,\ldots,\B v_{N_n}\in\I S^{d-1}\cap\I Q^d$ such that 
$P_n^{\B v_1},\ldots,P_n^{\B v_{N_n}}$ form a basis for~$\C H_n$. Let $\B\beta=(\beta_1,\ldots,\beta_r)$ be an arbitrary element of $\SO(d)^r$ (not necessarily generic). 
 Consider the $N_n\times N_n$ matrix $L=L(\B\beta)$ with entries
 $$
 L_{ij}:=\frac1{\sigma_{d}} \langle G^{\B v_i}_{n,\B\beta},P_n^{\B v_j}\rangle,\quad\mbox{for } i,j\in [N_n].
 $$

Recall that the vectors $P_n^{\B v_i}$, $i\in [N_n]$, form a (not necessarily orthonormal) basis of the linear space $\C H_n$. Write the vectors $G^{\B v_i}_{n,\B\beta}$ in this basis: 
$$
 (G^{\B v_1}_{n,\B\beta},\ldots,G^{\B v_{N_n}}_{n,\B\beta})^T=A\,(P_n^{\B v_1},\ldots,P_n^{\B v_{N_n}})^T,
 $$
 for some $N_n\times N_n$ matrix~$A$. Then $L$ is the matrix product $AM$, where $M$ is the Gram matrix of the vectors $P_n^{\B v_i}$ multiplied by the constant $\sigma_d^{-1}$ (that is, the entries of $M$ are defined by the formula in~\eqref{eq:M}). The matrix $M$ is non-singular by the linear independence of $P_n^{\B v_i}$, $i\in [N_n]$. 
 Thus
$\det (L)\not=0$ if and only if  $G^{\B v_1}_{n,\B\beta},\ldots,G^{\B v_{N_n}}_{n,\B\beta}$ are linearly independent as vectors in~$\C H_n$.

 %View the $rn^2$ entries of $\gamma_1,\ldots,\gamma_r$ as ``unknowns''. 
 By Lemma~\ref{lm:P}.\ref{it:PScalar}, we have for every $i,j\in [N_d]$ that
 $$
  L_{ij}:=\frac1{\sigma_d} \sum_{s=1}^r \langle P_n^{\beta_s^{-1}.\B v_i},P_n^{\B v_j}\rangle 
  =\frac{1}{N_n}\sum_{s=1}^r P_n((\beta_s^{-1}.\B v_i)\cdot \B v_j)=\frac{1}{N_n}\sum_{s=1}^r P_n(\B v_i\cdot (\beta_s.\B v_j)).
  $$

Since $\B v_1,\ldots,\B v_{N_n}$ are fixed, this writes each $L_{ij}$ as a polynomial in the $d^2r$ entries of the matrices $\beta_1,\ldots,\beta_r$. Moreover, all coefficients of this polynomial are rational  since each $\B v_i$ belongs to $\I Q ^d$ and all coefficients of $P_n$ are rational
by Lemma~\ref{lm:P}.\ref{it:PQ}. Thus the determinant of $L$ is equal to $p(\B\beta)$ for some polynomial $p$ with coefficients in~$\I Q $.
% that depend only on~$\B v_1,\ldots,\B v_{N_n}$. 
%So far we have not have used the fact that $\B\gamma$ is generic (only that each $\gamma_i$ is in $\SO(d)$).

Note that if we let each $\beta_i$ be the identity matrix $I_d$, then $G^{\B v}_{n,\B\beta}$ becomes $rP_n^{\B v}$ for every $\B v\in \I S^{d-1}$ and we have  $L_{ij}=\frac r{\sigma_{d}} \langle P_n^{\B v_i},P_n^{\B v_j}\rangle$ for $i,j\in[N_n]$ and $\det (L)\not=0$ (since $P_n^{\B v_1},\ldots,P_n^{\B v_{N_n}}$ are linearly independent). Thus $p(I_d,\ldots,I_d)\not=0$. Since $\B\gamma\in\SO(d)^r$ is generic, we have that $p(\B\gamma)\not=0$, that is, the matrix $L$ for $\B\beta:=\B\gamma$ is non-singular. This means that the functions $G^{\B v_i}_{n,\B\gamma}$, $i\in [N_n]$,
%, from~\eqref{eq:Gv} 
are linearly independent. Since they all lie in $\C H_n$ and their number equals the dimension of this linear space, they span~$\C H_n$. The lemma is proved.\end{proof}
 
Given the above auxiliary results, we can derive Theorem~\ref{th:main} rather easily.

\begin{proof}[Proof of Theorem~\ref{th:main}.] Recall that $\B \gamma=(\gamma_1,\ldots,\gamma_r)\in\SO(d)^r$, $r\ge 2$, is generic  and we have to show that $\I S^{d-1}$ is not ``fractionally'' $\B \gamma$-divisible.

So take any $f\in L^2(\I S^{d-1},\mu)$ such that $\sum_{i=1}^r \gamma_i.f=1$ a.e. Since spherical harmonics are dense in $L^2(\I S^{d-1},\mu)$ (\gr{Corollary~3.2.7}) and we have the direct sum $\C H =\oplus_{n=0}^\infty \C H_n$ whose components are orthogonal to each other by~\eqref{eq:ortho}, we can uniquely write $f=\sum_{n=0}^\infty F_n$  in $L^2(\I S^{d-1},\mu)$ with $F_n\in\C H_n$ for every~$n\ge 0$. Since the action of $\SO(d)$ preserves each space $\C H_n$ as well as the
scalar product on $L^2(\I S^{d-1},\mu)$,  we have that
$\gamma.f=\sum_{n=0}^\infty \gamma.F_n$ is the harmonic expansion of $\gamma.f\in L^2(\I S^{d-1},\mu)$.

Take any integer $n\ge 1$. Recall that the sum  $\sum_{i=1}^r \gamma_i.f$ is a constant function $1$ a.e.
 By~\eqref{eq:ortho}, the invariance of the scalar product under $\SO(d)$ and by~\eqref{eq:gammaP}, we have that, for every~$\B v\in\I S^{d-1}$, 
 \begin{eqnarray*}
 0&=&\langle P_n^{\B v},1\rangle\ =\ \langle P_n^{\B v},\gamma_1.f+\ldots+\gamma_r.f\rangle\ =\ \langle P_n^{\B v},\gamma_1.F_n+\ldots+\gamma_r.F_n\rangle\\
 &=&\langle \gamma_1^{-1}.P_n^{\B v}+\ldots+\gamma_r^{-1}.P_n^{\B v},F_n\rangle\ =\ \langle G^{\B v}_{n,\B\gamma}, F_n\rangle,
 \end{eqnarray*}
 where $G^{\B v}_{n,\B\gamma}$ was defined by~\eqref{eq:Gv}.
 Since the functions $G^{\B v}_{n,\B\gamma}$, $\B v\in\I S^{d-1}$, span the whole space $\C H_n$ by Lemma~\ref{eq:GenericGammas}, we must have that $F_n=0$.
 
 As $n\ge 1$ was arbitrary, we have that $f$ is a constant function a.e.\ (whose value must be~$1/r$). This finishes the proof of Theorem~\ref{th:main}.\end{proof}

\hide{
Note that 
$$
 \sigma_dF_0=\langle 1,1\rangle F_0=\langle f,1\rangle=\frac{\sigma_d}{r}
$$
}

\begin{remark}
The statement of Theorem~\ref{th:main} remains true also when $\gamma_r=I_d$ and $(\gamma_1,\ldots,\gamma_{r-1})$ is a generic point of $\SO(d)^{r-1}$. One way to see this is to run the same proof except the $r$-th component of each encountered $r$-tuple of matrices is always set to be the identity matrix~$I_d$.  
\end{remark}

\section{Rotations generating a finite subgroup}
\label{se:Euler}

\begin{proof}[Proof of Proposition~\ref{pr:Euler}.] We have to show that an even-dimensional sphere $\I S^{d-1}$ is not $(\gamma_1,\ldots,\gamma_r)$-divisible if the subgroup $\Gamma$ of $\SO(d)$ generated by the rotations $\gamma_1,\ldots,\gamma_r$ is finite.

Since $d$ is odd, the $2$-divisibility of $\I S^{d-1}$ is impossible because of a fixed point of~$\gamma_1^{-1}\gamma_2$.
So assume that $r\ge 3$. Let 
$$V:=\Gamma.\{\pm\B e_1,\ldots,\pm\B e_d\},
$$
 that is, we take all possible images of the standard basis vectors and their negations when moved by $\Gamma$. 
Clearly, the set $V$ is a finite. Let $P$ be the convex hull of $V$. Then $P$ is a full-dimensional polytope containing $\Zero$ in its interior (as already the convex hull of $\{\pm\B e_1,\ldots,\pm\B e_d\}\subseteq V$ has these properties). 
Its boundary $\partial P$ is homeomorphic to $\Sd$ by the map that sends $\B x\in\partial P$ to $\B x/\|\B x\|_2\in\Sd$. 

Let a \emph{hyperplane} mean a $(d-1)$-dimensional affine subspace of~$\I R^d$.
Identify each oriented hyperplane $H\subseteq \I R^d$ with the pair $(\B n,a)\in\I S^{d-1}\times\I R$ so that 
$$
 H=\{\B x\in\I R^d\mid \B n\cdot \B x=a\}.
$$ 
Its \emph{open half-spaces} are $H^+:=\{\B x\in\I R^d\mid \B n\cdot \B x>a\}$ and $H^-:=\{\B x\in\I R^d\mid \B n\cdot \B x<a\}$. 
Call $H$ \emph{supporting} if $H\cap P\not=\emptyset$ and $H^-\cap P=\emptyset$. Call $H$ a \emph{facet hyperplane} if it is supporting and $\affdim(H\cap P)=d-1$, where $\affdim(X)$ denotes the dimension of the affine subspace of $\I R^d$ spanned by~$X$. 
%(Since the origin is not in the interior of $H$, the last

The intersections of supporting hyperplanes with $\partial P$ represent the boundary of the polytope $P$ as a CW-complex. Namely, for $i\in \{0,\ldots,d-1\}$, its $i$-dimensional cells are precisely the \emph{$i$-dimensional faces} of $P$, that is, the convex hulls of the sets in 
$$
\C C_{i}:=\{X\subseteq V\mid  \affdim(X)=i\ \&\ \exists \mbox{ supporting hyperplane $H$ with $H\cap V=X$}\}.
$$

For a finite non-empty set $X\subseteq \I R^d$, let 
$
\B m_X:=\frac1{|X|} \sum_{\B x\in X} \B x
$ 
be the \emph{centre of mass} of~$X$.

Let us show that for every $i\in \{0,\ldots,d-1\}$ and  distinct $X,Y\in\C C_{i}$ we have $\B m_X\not=\B m_Y$. 
As it is well-known,  see e.g.~\cite[Theorem 3.1.7]{Grunbaum03cp}, we can pick facet hyperplanes $H_1,\ldots,H_k$ such that $V\cap (\cap_{j=1}^{k} H_j)=X$. 
Since $X\not=Y$, the affine subspaces that these two sets span differ. Since these subspaces have the same dimension, there is $\B y\in Y$ not in the affine span of~$X$.
Since $\B y\in V$ and each $H_j$ is supporting, there is $j\in[k]$ such that $\B y$ belongs to the open half-space $H_j^+$. From $Y\subseteq H_j\cup H_j^+$, it follows that $\B m_Y$ belongs to $H_j^+$ and cannot be equal to $\B m_X\in H_j$, as claimed.

Also, it holds that $\B m_X\not=\Zero$ for any $X\in\C C_{i}$. Indeed, with $H_1,\ldots,H_k$ as above we have that $\Zero$, which is in the interior of $P$, belongs to, say, the open half-space $H_1^+$ so cannot be equal to $\B m_X\in
%\cap_{j=1}^k H_j\subseteq 
H_1$.

Thus $|M_i|=|\C C_i|$, where  $M_i:=\{\B m_X/\|\B m_X\|_2\mid X\in \C C_i\}\subseteq \Sd$ denotes the set of the normalised centres of mass of the vertex sets of $i$-dimensional faces. Clearly, the set family $\C C_i$ is invariant under the natural action of $\Gamma$ on finite subsets of $\Sd$. Thus the set $M_i\subseteq \Sd$ is also $\Gamma$-invariant. 

Since $d$ is odd, the Euler characteristic $\chi(\I S^{d-1})$ of the $(d-1)$-dimensional sphere is $2$, see e.g.\ \cite[Remark 4.2.21]{Weintraub14fat}. Since the faces of $\partial P$ give a representation of the sphere as a CW-complex, we have (by e.g.\ \cite[Theorem 4.2.20]{Weintraub14fat}) that
$$
 2=\chi(\Sd)=\sum_{i=0}^{d-1} (-1)^i|\C C_i|,
 $$
 Thus, for at least one $i\in \{0,\ldots,d-1\}$, it holds that $r\ge 3$ does not divide $|\C C_i|=|M_i|$. By the $\Gamma$-invariance of $M_i$, there is no choice of $A\cap M_i$ such that its translates by $\gamma_1,\ldots,\gamma_r$ partition $M_i$. Thus $\I S^{d-1}$ is not $(\gamma_1,\ldots,\gamma_r)$-divisible.\end{proof}

\begin{remark} 
Under the assumptions of Proposition~\ref{pr:Euler}, its proof gives that if there are $d$ linearly independent vectors on $\I S^{d-1}$ such that each has a finite orbit under $\Gamma$ (where  some of these orbits may coincide) then $\I S^{d-1}$ is not $\B\gamma$-divisible. However, this seemingly weaker assumption is equivalent to the assumption that $\Gamma$ is finite (e.g.\ via a version of Claim~\ref{cl:Fin} below).
\end{remark}

\section{Actions with a finite orbit}\label{se:FinOrbit}

Here we prove Proposition~\ref{pr:FinOrbit} that, in the presence of at least one finite orbit, $\B\gamma$-divisibility is equivalent to measurable $\B\gamma$-divisibility.

\begin{proof}[Proof of Proposition~\ref{pr:FinOrbit}.] Recall that $\Gamma$ is the subgroup of $\SO(d)$ generated by~$\gamma_1,\ldots,\gamma_r$. For $\B x\in\I S^{d-1}$, let $L_{\B x}$ be the linear subspace of $\I R^d$ spanned by $\Gamma.\B x\subseteq \I R^d$.

\begin{claim}\label{cl:Inv}
	For every $\B x\in\I S^d$, both $L_{\B x}\subseteq\I R^d$ and its orthogonal complement $L_{\B x}^\perp\subseteq\I R^d$ are invariant under the action of $\Gamma$ on~$\I R^d$.
\end{claim}
 \bcpf Any $\gamma\in\Gamma$ permutes the set $\Gamma.\B x$. Since $\gamma$ is a linear map, it preserves the linear subspace $L_{\B x}$ spanned by~$\Gamma.\B x$. Thus $L_{\B x}$ is $\Gamma$-invariant.
 
Since $\Gamma$ consists of orthogonal matrices, its action preserves the scalar product on~$\I R^d$. Thus if $\B y\in\I R^d$ is orthogonal to $L_{\B x}$ then,  for every $\gamma\in\Gamma$, we have that $\gamma.\B y$ is orthogonal to $\gamma.L_{\B x}=L_{\B x}$. It follows that $L_{\B x}^\perp$ is $\Gamma$-invariant.\ecpf

Recall that $\B z\in\I S^{d-1}$ is a vector such that its orbit $\Gamma.\B z$ is finite. Let $z_1,\ldots,z_n$ be the elements of~$\Gamma.\B z$.

\begin{claim}\label{cl:Fin} 
	If $\B x\in L_{\B z}\cap \I S^{d-1}$ then $|\Gamma.\B x|\le n!$.
	% (in particular $\B x$ has a finite orbit under the action by~$\Gamma$).
\end{claim}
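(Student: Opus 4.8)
The plan is to exploit that $\Gamma$ acts on the finite set $\Gamma.\B z=\{z_1,\ldots,z_n\}$ by a permutation representation, together with the fact that this set linearly spans $L_{\B z}$. First I would introduce the group homomorphism $\phi\colon\Gamma\to\mathrm{Sym}(\{z_1,\ldots,z_n\})$ that sends $\gamma\in\Gamma$ to the permutation it induces on the orbit $\Gamma.\B z$ (this is well defined precisely because any $\gamma\in\Gamma$ maps the orbit $\Gamma.\B z$ onto itself). Let $K:=\ker\phi$; being a kernel, it is a normal subgroup of $\Gamma$, and $\Gamma/K$ embeds into $\mathrm{Sym}(\{z_1,\ldots,z_n\})\cong S_n$, so $[\Gamma:K]=|\Gamma/K|\le n!$.

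The crucial observation is that $K$ fixes $L_{\B z}$ pointwise. Indeed, each $\kappa\in K$ fixes every $z_i$ by definition of $K$, and since $\kappa$ is a linear map and $\{z_1,\ldots,z_n\}$ spans $L_{\B z}$, it follows that $\kappa.\B y=\B y$ for all $\B y\in L_{\B z}$. Consequently, if $\B x\in L_{\B z}\cap\I S^{d-1}$, then every element of $K$ stabilises $\B x$, i.e.\ $K\subseteq\mathrm{Stab}_\Gamma(\B x)$. (Note that $\Gamma.\B x$ indeed stays inside $L_{\B z}$ by Claim~\ref{cl:Inv}, though this is not needed for the bound.)

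Finally, applying the orbit--stabiliser relation and the inclusion $K\subseteq\mathrm{Stab}_\Gamma(\B x)$ (which makes each coset of $\mathrm{Stab}_\Gamma(\B x)$ a union of cosets of $K$), we obtain
$$
 |\Gamma.\B x|\ =\ [\Gamma:\mathrm{Stab}_\Gamma(\B x)]\ \le\ [\Gamma:K]\ =\ |\Gamma/K|\ \le\ n!,
$$
which is exactly the claim. All the indices occurring here are at most $n!$, so no finiteness issue arises even though $\Gamma$ itself may be infinite. I do not anticipate any genuine obstacle in this argument; the only step that needs a moment's care is the passage from ``$K$ fixes each $z_i$'' to ``$K$ fixes $L_{\B z}$ pointwise'', which is immediate from linearity once one recalls that the $z_i$ span $L_{\B z}$.
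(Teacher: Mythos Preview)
Your proof is correct and uses essentially the same idea as the paper: both arguments rest on the observation that the action of $\Gamma$ on $L_{\B z}$ factors through the permutation action on the finite orbit $\{z_1,\ldots,z_n\}$. The paper phrases this concretely by writing $\B x=\sum_i c_i z_i$ and noting that $\alpha.\B x=\sum_i c_i z_{\sigma(i)}$ for the permutation $\sigma$ induced by $\alpha$, while you package the same observation via the kernel $K$ of the permutation representation and orbit--stabiliser; the mathematical content is identical.
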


\bcpf 
%For each $i\in [n]$ pick some $\alpha_i\in \Gamma$ with $\alpha_i.\B z=\B z_i$.
Write $\B x\in L_{\B z}$ as $\sum_{i=1}^n c_i\B z_i$ for some reals $c_1,\ldots,c_n$. For every $\alpha\in\Gamma$, we have by linearity that $\alpha.\B x=\sum_{i=1}^n c_i (\alpha .\B z_i)$. Since $\B z_1,\ldots,\B z_n$ enumerate a whole orbit of $\Gamma$, the element $\alpha\in\Gamma$ permutes these vectors. Thus every element of $\Gamma.\B x$ is of the form $\sum_{i=1}^n c_{i} \B z_{\sigma(i)}$ for some permutation $\sigma$ of $[n]$. Thus $\Gamma.\B x$ indeed has at most $n!$ elements.\ecpf

Now we are ready to prove the (non-trivial) forward direction of Proposition~\ref{pr:FinOrbit}.  By rotating the sphere (and moving $\B z$ and conjugating $\gamma_i$'s accordingly), we can assume that $L_{\B z}=\I R^m\times \Zero$ and $L_{\B z}^\perp=\Zero\times \I R^{d-m}$ for some $m\in [d]$. 
%Enumerate $\Gamma.\B z=\{\B z_1,\ldots,\B z_n\}$. 
By Claim~\ref{cl:Inv}, every matrix $\gamma_i$, $i\in [r]$, consists now of two diagonal blocks that correspond to some $\alpha_i\in \mathrm{O}(m)$ and $\beta_i\in \mathrm{O}(d-m)$. (Note that these matrices may have determinant~$-1$.) When we write a vector in $\I R^d$ as $(\B x,\B y)$, we mean that $\B x\in\I R^m$ and $\B y\in\I R^{d-m}$; thus $\gamma_i.(\B x,\B y)=(\alpha_i.\B x,\beta_i.\B y)$.

Fix $C\subseteq \I S^{d-1}$ such that $\gamma_1.C,\ldots,\gamma_r.C$ partition~$\I S^{d-1}$. By the invariance of $L_{\B z}$ and $L_{\B z}^\perp$, the translates of the set $C\cap (\I R^{m}\times \Zero)$ (resp.\ $C\cap (\Zero\times \I R^{m-d})$) by $\gamma_1,\ldots,\gamma_r$ partition $\I S^{m-1}\times\Zero$ (resp.\ $\Zero\times \I S^{d-m-1}$). By Claim~\ref{cl:Fin}, every orbit of the 
action of $\Gamma$ on the invariant subset $X:=\I S^{m-1}\times\Zero$ has at most $n!$ elements. Obviously, the same holds for the action  on~$\I S^{m-1}$ of the subgroup $\Gamma'\subseteq \mathrm{O}(m)$ generated by $\alpha_1,\ldots,\alpha_r$. 
Fix a Borel total order on $\I S^{m-1}$ (e.g.\ the restriction of the lexicographic order on $\I R^m$) and let $A'\subseteq X$ be obtained by picking from every orbit $\Gamma'.\B x\subseteq \I S^{m-1}$ the lexicographically smallest subset such that its translates by $\alpha_1,\ldots,\alpha_r$ partition $\Gamma'.\B x$. Such a set always exists since $\{\B y\in\Gamma'.\B x\mid (\B y,\Zero)\in C\}$ is one possible choice. 
%The action $\Gamma\actson X$ is Borel (as the restriction of the continuous action $\Gamma\actson\I S^{d-1}$ to the closed set $X$) and, 
In the terminology of~\cite{Pikhurko21bcc}, the set $A'$ can be computed by a local rule of radius $n!$ on the \emph{coloured Schreier digraph} of $\Gamma'\actson \I S^{m-1}$ (where the vertex set is $\I S^{d-1}$ and we put a directed colour-$i$ arc from $\B y$ to $\alpha_i.\B y$
for all $\B y\in \I S^{m-1}$ and $i\in [r]$). As the action is Borel, this is known  to imply (see e.g.\ \cite[Lemma~5.17]{Pikhurko21bcc}) that the constructed set $A'\subseteq \I S^{m-1}$ is Borel.
Define
%Now we expand $A'$ in a Borel way so that we extend the $\B\gamma$-division from $X$ to $\I S^{d-1}\setminus L_{\B z}^\perp$. Namely, let 
\begin{eqnarray*}
 A&:=&\textstyle \bigcup_{\rho\in [0,1)} (\sqrt{1-\rho^2}\, A'\times \rho\,\I S^{d-m-1})\\
 &=&\textstyle \bigcup_{\rho\in [0,1)} \{ (\sqrt{1-\rho^2}\, \B x, \rho\, \B y)\mid \B x\in A',\ \B y\in \I S^{d-m-1}\}
\end{eqnarray*}
 and $B:=C\cap (\Zero\times \I R^{m-d})$.
Then  $\gamma_1.A,\ldots,\gamma_r.A$ partition $\I S^{d-1}\setminus (\Zero\times \I S^{d-m-1})$ and, as we observed earlier, $\gamma_1.B,\ldots,\gamma_r.B$ partition $\Zero\times \I S^{d-m-1}$. Thus $A\cup B$ 
witnesses the $\B\gamma$-divisibility of~$\I S^{d-1}$. Note that the set $B$, which lies inside the intersection of $\I S^{d-1}$ with the linear subspace $L_{\B z}^\perp$ of dimension less than $d$, has measure zero. On the other hand, the set $A$ can be equivalently defined as the pre-image of the Borel set $A'\times \I R^{d-m}$ under the natural homeomorphism between $\I S^{d-1}\setminus (\Zero\times \I S^{d-m-1})$ and $\I S^{m-1}\times \I R^{d-m}$ that maps $(\B x,\B y)$ to $(\B x/\|\B x\|_2,\B y/\|\B x\|_2)$. Thus $A$ is Borel and $A\cup B$ is measurable, proving the proposition.\end{proof}

\begin{remark}
One can show via Claims~\ref{cl:Inv} and~\ref{cl:Fin} that if $d=3$ and a subgroup $\Gamma\subseteq \SO(d)$ has a finite orbit of size at least $3$, then $\Gamma$ is finite (and thus Proposition~\ref{pr:Euler} applies). However, this implication is not true in general for $d\ge 4$. For example, we can take the subgroup of $\SO(d)$ generated by a diagonal block matrix $M$ whose first (resp.\ second) block is a $2\times 2$ special orthogonal matrix of order 3 (resp.\ of infinite order), while all remaining blocks are the $1\times 1$ identity matrices. Then $M$ has an infinite order (coming from the second block) but its action on $\I S^{d-1}$ has an orbit with exactly 3 elements (e.g.\ the orbit of the first standard basis vector $(1,0,\ldots,0)$).
\end{remark}

\section{Measurable divisibility of higher-dimensional spheres}
\label{se:HigherDim}
 
 As we mentioned in the Introduction, $\Sd$ is $r$-divisible with Borel pieces for every $r\ge 2$ and even $d\ge 2$
 (\TW{Theorem 6.6(a)}). 
 The proof of~\cite[Theorem~6.6(b)]{TomkowiczWagon:btp} for any $r\ge 3$ and odd $d\ge 5$ gives measurable pieces. Since this conclusion does not seem to be explicitly stated anywhere in \cite{TomkowiczWagon:btp}, we provide the simple proof from~\cite{TomkowiczWagon:btp}.
 
 \begin{lemma}\label{lm:extend} For any $d\ge 5$ and $r\ge 3$, $\I S^{d-1}$ is $r$-divisible  with   measurable pieces.
 \end{lemma}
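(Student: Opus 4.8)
The plan is to prove Lemma~\ref{lm:extend} by induction on the dimension, reducing an $r$-division of $\I S^{d-1}$ with measurable pieces to an $r$-division of $\I S^{d-3}$ with measurable pieces, together with a base case. For the base case one takes $d=5$, so $\I S^{d-3}=\I S^2$, and uses the known fact that $\I S^2$ is $r$-divisible for every $r\ge 3$ (Robinson~\cite{Robinson47fm} for $r=3$, and for larger $r$ one can compose, or invoke Theorem~6.6 of~\cite{TomkowiczWagon:btp}); here measurability is automatic since every subset of a sphere is trivially a subset, but what we actually need is that the division we feed into the lift is an arbitrary (not necessarily measurable) division, and that the \emph{lift} produces a measurable one. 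So the real content is the inductive step, and the induction is on the odd number $d-1$, anchored at $d-1=3$.

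The inductive step is the standard suspension construction. Write $\I R^{d}=\I R^{d-2}\times\I R^{2}$ and for $\B x\in\I S^{d-1}$ write $\B x=(\sqrt{1-\rho^2}\,\B u,\rho\,\B w)$ with $\B u\in\I S^{d-3}$, $\B w\in\I S^{1}$, $\rho\in[0,1]$, noting $\B u$ is undefined when $\rho=1$ and $\B w$ is undefined when $\rho=0$. Given rotations $\beta_1,\ldots,\beta_r\in\SO(d-2)$ witnessing a $\B\beta$-division $C\subseteq\I S^{d-3}$, and given a partition of $\I S^1$ into $r$ congruent arcs realised by the rotation $\delta$ by angle $2\pi/r$ (with corresponding half-open arc $D\subseteq\I S^1$), set $\gamma_i:=\beta_i\oplus\delta^{\,i-1}\in\SO(d)$ for $i\in[r]$ acting as $\gamma_i.(\B v,\B w)=(\beta_i.\B v,\delta^{i-1}.\B w)$. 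The idea is to declare $A$ to consist of the points with $\rho<1$ whose "$\B u$-part" lies in $C$ \emph{and} whose "$\B w$-angle" lies in $D$, and handle the measure-zero circle $\rho=1$ separately; the translates $\gamma_i.A$ then partition $\I S^{d-1}\setminus(\Zero\times\I S^1)$ because on the $\I S^{d-3}$ factor the $\beta_i.C$ partition and on the $\I S^1$ factor the $\delta^{i-1}.D$ partition, and these are independent. One then patches in a $\B\delta$-division of the residual circle $\Zero\times\I S^1$, which exists and is Borel (take $r$ equal half-open arcs), and the union is a $\B\gamma$-division.

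The reason the resulting set is measurable, which is the point of the lemma and the main thing to get right, is that $A$ is the image of the Borel set $\{(\B u,\theta)\in\I S^{d-3}\times[0,2\pi)\mid \B u\in C',\ \theta\in D'\}\times(0,1)$... no — more precisely, after pushing the non-measurable part of $C$ out: one observes that in the lift the ``bad'' set $C$ never needs to be measurable because the circle factor carries all the combinatorial weight. Concretely, set $A:=\bigl(\bigsqcup_{\rho\in(0,1)}\sqrt{1-\rho^2}\,\I S^{d-3}\times\rho D\bigr)$, i.e.\ $A$ does \emph{not} depend on $C$ at all: for a fixed ``latitude'' $\rho\in(0,1)$, $\gamma_i.A$ restricted to that latitude is $\sqrt{1-\rho^2}\,(\beta_i.\I S^{d-3})\times\rho\,\delta^{i-1}.D=\sqrt{1-\rho^2}\,\I S^{d-3}\times\rho\,\delta^{i-1}.D$, and since the arcs $\delta^{i-1}.D$ partition $\I S^1$ the sets $\gamma_i.A$ partition the latitude sphere. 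So $A$ is just a union of products of full spheres with a half-open arc, which is Borel, and we only invoke the $C$-division to handle the fibre over $\rho=1$, which is a single copy of $\I S^{d-3}$ sitting inside the measure-zero set $\{\rho=1\}$; there we use an arbitrary $\B\beta$-division of $\I S^{d-3}$ (existence from Theorem~6.6 of~\cite{TomkowiczWagon:btp}, since $d-3\ge 2$), which contributes a null set. Hence $A':=A\cup(\text{that null piece})$ is measurable and gives a $\B\gamma$-division of $\I S^{d-1}$, completing the induction.

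The main obstacle is bookkeeping rather than mathematics: one must be careful that $d-3\ge 2$ so that $\I S^{d-3}$ is genuinely an odd-or-even sphere for which \emph{some} $r$-division exists (this is exactly why the hypothesis is $d\ge 5$, not $d\ge 3$), and one must verify that the rotations $\gamma_i=\beta_i\oplus\delta^{i-1}$ indeed lie in $\SO(d)$ (each block is special orthogonal of the appropriate dimension, so the direct sum has determinant $1$), and that the two ``boundary'' pieces $\{\rho=1\}$ and the undefined-$\B w$ locus $\{\rho=0\}$ are correctly and disjointly absorbed. I would also remark that measurability here is with respect to $\mu$ on $\I S^{d-1}$, and the homeomorphism $\I S^{d-1}\setminus\{\rho=1\}\cong\I S^{d-3}\times\I R^2$ (or a product chart) sends $A$ to a Borel set, which is the clean way to certify $A$ is Borel without manipulating the union over $\rho$ directly.
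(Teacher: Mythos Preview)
Your approach is essentially the paper's: lift via $\gamma_i=\beta_i\oplus\delta^{i-1}$, take the Borel piece to be the full $\I S^{d-3}$ crossed with a half-open arc on each latitude, and hide the possibly non-measurable $\I S^{d-3}$-division inside a null set. You correctly isolate the key point, namely that the circle factor alone carries the partition on the generic latitudes so that $C$ is never used measurably. The induction framing is unnecessary: a single lift suffices, since existence of \emph{some} (not necessarily measurable) $r$-division of $\I S^{d-3}$ is all that is invoked, and that comes directly from the cited Theorem~6.6 for each $d\ge 5$.

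However, your bookkeeping on the two boundary latitudes is swapped. In your parametrisation $\B x=(\sqrt{1-\rho^2}\,\B u,\rho\,\B w)$, the locus $\rho=1$ is $\{\Zero\}\times\I S^{1}$ (a circle, not a copy of $\I S^{d-3}$), while the locus $\rho=0$ is $\I S^{d-3}\times\{(0,0)\}$. So it is at $\rho=0$, not $\rho=1$, that you must invoke the (possibly non-measurable) $\B\beta$-division $C\subseteq\I S^{d-3}$, contributing a null set; the circle at $\rho=1$ is handled by the arc $D$ itself and can simply be absorbed into your Borel set $A$ by taking $\rho\in(0,1]$. As written, your final construction never covers the $\rho=0$ fibre at all. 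You flag exactly this issue as ``bookkeeping rather than mathematics'' and then make precisely the slip you warned against; once the indices are swapped, the argument is complete and coincides with the paper's.
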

 \begin{proof} Informally speaking, we will use the Borel $r$-divisibility of $\I S^1$ in the last two coordinates of $\I S^{d-1}\subseteq \I R^{d}$, resorting to the $r$-divisibility of $\I S^{d-3}$ only on the null set of points where the last two coordinates are zero.

 Namely, choose rotations $\alpha_1,\ldots,\alpha_r\in \SO(d-2)$ and a (not necessarily measurable) subset $A\subseteq \I S^{d-3}$ such that $\alpha_1.A,\ldots,\alpha_r.A$ partition $\I S^{d-3}$, which is 
 possible by e.g.~\cite[Theorem~6.6]{TomkowiczWagon:btp}.
 Let $\beta\in\SO(2)$ be the rotation of the circle $\I S^1$ by the angle~$2\pi/r$. (Thus the order of $\beta$, as an element of the group $\SO(2)$, is~$r$.) For $i\in[r]$, let $\gamma_i$ send $(\B x,\B y)\in \I R^{d-2}\times\I R^2$ to $(\alpha_i.\B x,\beta^i.\B y)$, where we view $\SO(m)$ as also acting on $\I R^m$.
 % for an integer $m\ge 1$. 
 Clearly, $\gamma_i$ preserves both the scalar product on $\I R^{d}$ and the orientation; thus it is an element of~$\SO(d)$. 
 %(In terms of matrices, $\gamma_i$ consists of two diagonal blocks, the $(d-2)\times (d-2)$ matrix of $\alpha_i$ and the $2\times 2$ matrix of the rotation~$\gamma$.) 
 
 \hide{
 	One way to complete the proof now is to invoke Proposition~\ref{pr:FinOrbit}, observing that the orbit of $(0,\ldots,0,1)\in\I S^{d-1}$ under the group generated by $\gamma_1,\ldots,\gamma_r$ is finite, namely has exactly $m$ elements. However, we present instead a direct and simple description of possible pieces.
 }
 
 Let $B:=\{(\cos \theta,\sin \theta)\mid 0\le \theta< 2\pi/r\}\subseteq\I S^1$. 
 Then the half-open arcs $\beta.B,\ldots,\beta^{r}.B$ partition $\I S^1$. Let  $C:=A'\cup B'$, where
 $A':=A\times\{\,(0,0)\,\}$ and
 $$
 \textstyle
 B':=\bigcup_{\rho\in [0,1)} (\rho\,\I S^{d-3}\times \sqrt{1-\rho^2}\, B)
 %=\cup_{\rho\in [0,1)} \{ (\rho\, \B x, \sqrt{1-\rho^2}\, \B y)\mid \B x\in \I S^{d-3},\ \B y\in B\}
 .
 $$ 
 Clearly, $A'$ is a $\mu$-null subset of~$\Sd$ and $B'$ is a Borel subset of $\Sd$. Thus $C$ is   measurable. Also, $\gamma_1.C,\ldots,\gamma_r.C$ partition $\I S^{d-1}$. Indeed, $\gamma_i.A'= \alpha_i.A\times\{\,(0,0)\,\}$, $i\in [r]$, partition $\I S^{d-3}\times\{\,(0,0)\,\}$ while $\gamma_i.B'=\cup_{\rho\in [0,1)}(\rho\,\I S^{d-3}\times \sqrt{1-\rho^2}\, (\beta^i.B))$, $i\in [r]$, partition the rest of $\I S^{d-1}$.\end{proof}

\section{Measurable divisibility for $d=2$ and $r\le 4$}
\label{se:d=2}

We parametrise $\I S^{1}=\{(\cos t,\sin t)\mid t\in [0,2\pi)\}$ and use the parameter $t$ instead of the Cartesian coordinates. Thus we have the interval $[0,2\pi)$ with $\mu$ being the Lebesgue measure on it. The space $\C H_n$ for $n\ge 1$ becomes the span of $\cos nt$ and $\sin nt$ (while,  of course,  $\C H_0$ consists of all constant functions). Here, the harmonic expansion is nothing else as the Fourier series. We identify $\SO(2)$ with the additive group $\I T:=\I R/2\pi\I Z$ of reals taken modulo $2\pi$. Thus the action of  $\gamma\in\I T$ on $[0,2\pi)$ is to send $t\in [0,2\pi)$ to $t+\gamma\pmod{2\pi}$. We also identity $[0,2\pi)$ with $\I T$; thus we have the natural action $\I T\actson\I T$.  

Let us investigate various possible versions of ``measurable'' divisibility, stated in terms of the action~$\I T\actson\I T$. Let $\Borel_r$ (resp.\ $\C M_r$)  consist of those $r$-tuples $(t_1,\ldots,t_{r})\in \I T^{r}$ for which there is a Borel (resp.\ measurable) subset $A\subseteq \I T$ such that $t_1+A,\ldots,t_r+A$ partition~$\I T$, where we denote $t+A:=\{t+a\mid a\in A\}$. Also, let $\C M'_r$  consist of those $(t_1,\ldots,t_{r})\in \I T^{r}$ for which
there is a measurable (equivalently, Borel) $A\subseteq \I T$ such that the translates $t_1+A,\ldots,t_r+A$ are pairwise disjoint and the set of elements of $\I T$ not covered by them has measure zero.
Finally, let $\Frac_r$ consist of those $(t_1,\ldots,t_{r})\in \I T^{r}$ for which
there is $f\in L^2([0,2\pi),\mu)$ such that $t_1.f+\ldots+t_r.f=1$ a.e.\ while $f\not=1/r$ on a set of positive measure. As it is easy to see, the definition of $\Frac_r$ does not change if we require  $t_1.f+\ldots+t_r.f=1$ to hold everywhere.  Trivially, it holds that 
$$
 \Borel_r\subseteq \C M_r\subseteq\C M'_r\subseteq \Frac_r.
$$

First, we investigate $\Frac_r$. Suppose that we have some $f\in L^2([0,2\pi),\mu)$ such that $t_1.f+\ldots+t_r.f=1$ a.e. Take the Fourier series,
$$
f(t)=c_0+\sum_{n=1}^\infty (c_n \cos nt+s_n\sin nt),\quad \mbox{for a.e.\ }t\in [0,2\pi).
$$
Clearly, $c_0=1/r$. For  $i\in [r]$, by translating everything by $t_i$ we get that
$$
(t_i.f)(t)=\frac1r+\sum_{n=1}^\infty (c_n \cos n(t-t_i)+s_n\sin n(t-t_i)),\quad \mbox{for a.e.\ }t\in [0,2\pi).
$$
Summing this up for all $i\in [r]$ and using the formula for the sine and the cosine of a difference of two angles, we get that for a.e.\ $t\in [0,2\pi)$
\begin{eqnarray*}
	1
	&=&1+ \sum_{i=1}^r \left(\sum_{n=1}^\infty c_n(\cos nt  \cos nt_i+\sin nt\sin nt_i)+s_n(\sin nt  \cos nt_i-\cos nt\sin nt_i)\right)\\
	&=&1+\sum_{n=1}^\infty \left(\sum_{i=1}^r(c_n\cos nt_i-s_n \sin nt_i)\cos nt+\sum_{i=1}^r(c_n\sin nt_i+s_n \cos nt_i)\sin nt\right).
\end{eqnarray*}
(Recall that $\sum_{i=1}^r t_i.f=1$ a.e.)

Let $n\ge 1$. By the uniqueness of the Fourier coefficients, we have that
\begin{eqnarray*}
	\sum_{i=1}^r(c_n\cos nt_i-s_n \sin nt_i)&=&0,\mbox{ and}\\
	\sum_{i=1}^r(c_n\sin nt_i+s_n \cos nt_i)&=&0.
\end{eqnarray*}
Suppose that $(c_n,s_n)\not=(0,0)$. If we multiply the above equations by $c_n$ and $s_n$ (resp.\ by $-s_n$ and $c_n$) and add up, we get after dividing by $c_n^2+s_n^2$ that
\beq{eq:Necessary}
\sum_{i=1}^r\cos nt_i=0\quad\mbox{and}\quad\sum_{i=1}^r \sin nt_i=0,
\eeq
that is, the vectors $(\cos nt_i,\sin nt_i)\in\I R^2$, $i\in [r]$, sum up to zero.

If $f$ differs from $1/r$ on a set of positive measure then,  for at least one integer $n\ge 1$, we have $(c_n,s_n)\not=(0,0)$ and thus \eqref{eq:Necessary} holds. Conversely, if \eqref{eq:Necessary} holds for some $n\ge1$, then we can take, for example, $f(t):=(1+\cos nt)/r$ for $t\in [0,2\pi)$. This  completely describes the set of $r$-tuples in $\SO(2)$ for which the circle $\I S^1$ is ``fractionally'' divisible:

\begin{proposition}\label{pr:Fr}
	An $r$-tuple $(t_1,\ldots,t_r)\in \I T^r$ belongs to $\C F_r$ if and only if~\eqref{eq:Necessary} holds for at least one integer~$n\ge 1$.\qed\end{proposition}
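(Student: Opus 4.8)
The plan is to establish both directions of the equivalence by tracking the Fourier coefficients of a putative $f$, exactly along the lines of the computation already carried out in the text leading up to the statement. Most of the work has in fact been done: I would simply organize it into a clean two-direction argument.

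For the forward direction, suppose $(t_1,\ldots,t_r)\in\Frac_r$, so there is $f\in L^2([0,2\pi),\mu)$ with $\sum_{i=1}^r t_i.f=1$ a.e.\ but $f\neq 1/r$ on a set of positive measure. Write the Fourier series $f(t)=c_0+\sum_{n\ge 1}(c_n\cos nt+s_n\sin nt)$. Since $f$ is not a.e.\ equal to $1/r$, by the uniqueness of Fourier coefficients (and $c_0=1/r$ from averaging the relation $\sum_i t_i.f=1$) there must be some $n\ge 1$ with $(c_n,s_n)\neq(0,0)$. Fixing such an $n$, I would invoke the two scalar identities $\sum_{i=1}^r(c_n\cos nt_i-s_n\sin nt_i)=0$ and $\sum_{i=1}^r(c_n\sin nt_i+s_n\cos nt_i)=0$ derived above from comparing Fourier coefficients of both sides of $\sum_i t_i.f=1$. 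Viewing these as a $2\times 2$ linear system in the unknowns $\sum_i\cos nt_i$ and $\sum_i\sin nt_i$ with coefficient matrix $\begin{pmatrix}c_n&-s_n\\ s_n&c_n\end{pmatrix}$, whose determinant $c_n^2+s_n^2$ is nonzero, I conclude $\sum_{i=1}^r\cos nt_i=0$ and $\sum_{i=1}^r\sin nt_i=0$, i.e.\ \eqref{eq:Necessary} holds for this $n$.

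For the converse, suppose \eqref{eq:Necessary} holds for some $n\ge 1$. Then I would exhibit an explicit witness: set $f(t):=(1+\cos nt)/r$ for $t\in[0,2\pi)$. Clearly $f\in L^2$ and $f$ differs from $1/r$ on a set of positive measure (the set where $\cos nt\neq 0$). It remains to check $\sum_{i=1}^r t_i.f=1$ a.e.; but $\sum_{i=1}^r(t_i.f)(t)=1+\frac1r\sum_{i=1}^r\cos n(t-t_i)=1+\frac1r\big(\cos nt\sum_i\cos nt_i+\sin nt\sum_i\sin nt_i\big)=1$, using \eqref{eq:Necessary}. Hence $(t_1,\ldots,t_r)\in\Frac_r$.

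There is essentially no obstacle here — the statement is a direct bookkeeping corollary of the displayed computation preceding it, and the main (trivial) point to be careful about is that the forward direction needs the invertibility of the $2\times 2$ rotation-scaling matrix, which is precisely where the hypothesis $(c_n,s_n)\neq(0,0)$ gets used, and that the existence of such an $n$ follows from $f\not\equiv 1/r$ together with uniqueness of Fourier coefficients. One could also remark that the choice $f=(1+\cos nt)/r$ is just one of many witnesses. Given how short this is, I would be inclined to mark the proposition with \qed inline (as the paper already does) rather than spelling out a separate proof environment, but if a proof is desired the three paragraphs above suffice.
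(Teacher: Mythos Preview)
Your proposal is correct and follows essentially the same argument as the paper: the forward direction uses the Fourier expansion and the nonvanishing of $c_n^2+s_n^2$ for some $n\ge 1$ (your $2\times 2$ matrix inversion is exactly the paper's ``multiply by $c_n,s_n$ and $-s_n,c_n$ and add'' step), and the converse uses the same explicit witness $f(t)=(1+\cos nt)/r$. As you note, the paper indeed just marks the proposition with \qed since the preceding discussion already constitutes the proof.
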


Let us investigate the sets $\Borel_r$ and $\C M'_r$ for $r\le 4$. As we will see, it holds for each $r\le 4$ that $\Borel_r=\C M'_r$ (and, in particular, this set is  also equal to~$\C M_r$).

Let $(t_1,\ldots,t_{r})\in \I T^{r}$. By replacing $(t_1,\ldots,t_r)$ by $(t_1-t_r,\ldots,t_r-t_r)$, which does not affect divisibility, we can assume for convenience that~$t_r=0$. Since $\C M'_r\subseteq \C F_r$, assume that~\eqref{eq:Necessary} holds for some $n\ge 1$. Let $n\ge 1$ be the smallest integer with this property. 

Suppose first that $r=2$. By~\eqref{eq:Necessary} we have $nt_1=(2k+1)\pi$ for some integer $k\ge 0$. Note that $n$ and $2k+1$ are coprime: if an integer $q>1$ divides both $n$ and $2k+1$ then, for $n':=n/q$, we have $n't_1=\frac{2k+1}{q} \pi$ and thus~\eqref{eq:Necessary} holds for $n'<n$, contradicting the minimality of~$n$.
Therefore, the subgroup of $\I T$ generated by $t_1=(2k+1)\pi/n$ is $\left\{\frac{\pi m}{n}\mid m\in \{0,\ldots,2n-1\}\right\}$, which is the additive cyclic group of order $2n$ with $t_1$ corresponding to an odd multiple of the generator~${\pi}/n$. Since the addition of $t_1$ swaps odd and even multiples of ${\pi}/{n}$, 
we have that 
$$
\textstyle A:=\left\{\frac{\pi m}{n}\mid m\in \{0,2,\ldots,2n-2\}\right\}+\left[0,\frac{\pi}n\right)
$$ 
satisfies $t_1+A=[0,2\pi)\setminus A$ and shows that $(r_1,0)\in\Borel_2$, where for $B,C\subseteq \I T$ we denote 
$$
 B+C:=\{b+c\mid b\in B,\ c\in C\}.
 $$
 Thus $\Borel_2=\C M_2=\C M'_2=\Frac_2$ and this set can be equivalently described as consisting of precisely those $(t_1,t_2)\in \I T^2$ such that $t_2-t_1\in\I T$  generates a finite subgroup of even order.

Suppose that $r=3$. Three vectors on the unit circle sum to $\Zero$ if and only if they form an equilateral triangle. (Indeed, the sum of any two unit vectors has norm 1 if and only if the angle between the vectors is $2\pi/3$.)  Thus, up to swapping $t_1$ and $t_2$, we can assume that $nt_1\equiv 2\pi/3$ and $nt_2\equiv 4\pi/3$ modulo $2\pi$.
Each of $t_1,t_2\in [0,2\pi)$ is a (non-zero) integer multiple of $2\pi/(3n)$. Let $k_1,k_2\in [3n-1]$ satisfy $t_i=2\pi k_i/(3n)$.  By the minimality of $n$, the greatest common divisor $\gcd(k_1,k_2,n)=1$. Furthermore, it is impossible that $3$ divides both $k_1$ and $k_2$, for otherwise by e.g.\ $2\pi k_1/(3n)\equiv 2\pi/3\pmod{2\pi}$ we have that $3$ also divides $n$, a contradiction to $\gcd(k_1,k_2,n)=1$. Therefore, the subgroup generated by $t_1,t_2\in\I T$ is $\left\{\frac{2\pi k}{3n}\mid k\in \{0,\ldots,3n-1\}\right\}$, which is the cyclic group of order~$3n$. For $i=1,2$, we have $k_in\equiv in\pmod{3n}$ and thus $k_i\equiv i\pmod 3$. Thus
if we take 
$$
\textstyle A:=\left\{\frac{2\pi m}{3n}\mid m\in \{0,3,\ldots, 3n-3\}\right\}+\left[0,\frac{2\pi}{3n}\right),
$$
then $t_1+A$, $t_2+A$ and $t_3+A=A$ partition~$[0,2\pi)$. 
%(Indeed, we have $t_i.A=\{i,i+3,\ldots,3n-3+i\}+[0,\frac{2\pi}{3n})$ for $i=1,2$.) 
We conclude that $\Borel_3=\C M_3=\C M'_3=\Frac_3$ and this set can be alternatively described as consisting, up to a permutation of indices, precisely of the triples $\left(\frac{2\pi k_1}{3n}+t,\frac{4\pi k_2}{3n}+t,t\right)$ with $n\ge 1$, $k_1,k_2\in [3n-1]$ and $t\in\I T$ such that $\{k_1,k_2\}\equiv \{1,2\}\pmod {3}$ and the greatest common divisor of $k_1$, $k_2$ and $n$  is~1.

Suppose that $r=4$. We need the following geometric claim.

\begin{claim}\label{cl:4Vectors} Four vectors $(x_i,y_i)\in \I S^1$, $i\in [4]$, have sum $\Zero$ if and only if they can be split into two pairs of opposite vectors.\end{claim}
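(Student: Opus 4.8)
The plan is to prove Claim~\ref{cl:4Vectors} by an elementary geometric argument. The ``if'' direction is trivial: if the four unit vectors split into two pairs of opposite vectors, say $\V u, -\V u, \V w, -\V w$, then their sum is obviously $\Zero$. So the substance is the ``only if'' direction.

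First I would set up notation: write the four unit vectors as $\V w_i = (x_i, y_i) \in \I S^1$, $i \in [4]$, with $\sum_{i=1}^4 \V w_i = \Zero$. The key idea is to pair them up by considering the sum of a chosen pair. Rearranging the sum equation, $\V w_1 + \V w_2 = -(\V w_3 + \V w_4)$. Now I would use the observation (already exploited in the $r=3$ case of this section) about norms of sums of unit vectors: for unit vectors $\V a, \V b$ we have $\|\V a + \V b\|^2 = 2 + 2\,\V a\cdot\V b$, so $\|\V a + \V b\|$ ranges over $[0,2]$, and moreover $\|\V a + \V b\| = \|\V c + \V d\|$ for two such pairs simply constrains the dot products to be equal. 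That alone is not enough, so the real trick is to choose the pairing cleverly. I would argue as follows: among the three ways to partition $\{\V w_1,\V w_2,\V w_3,\V w_4\}$ into two pairs, pick the partition $\{\{\V w_i,\V w_j\},\{\V w_k,\V w_l\}\}$ minimizing $\|\V w_i + \V w_j\|$ (equivalently minimizing $\V w_i \cdot \V w_j$); since $\V w_i + \V w_j = -(\V w_k + \V w_l)$ always, both pairs in the chosen partition have the same sum-vector $\V s$, and $\|\V s\|$ is minimal over all three pairings.

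The main step is then to show this minimal pairing must consist of two pairs of opposite vectors, i.e.\ $\V s = \Zero$. Suppose $\V s \neq \Zero$. Then $\V w_i + \V w_j = \V s = \V w_k + \V w_l$ with $\|\V s\| \in (0,2]$. Geometrically, $\V w_i$ and $\V w_j$ are the two unit vectors whose sum is $\V s$; they are the reflections of each other across the line through $\V s$, and similarly for $\V w_k, \V w_l$ — so in fact $\{\V w_i, \V w_j\} = \{\V w_k, \V w_l\}$ as \emph{sets of directions symmetric about $\V s$}. If $\|\V s\| < 2$ there are exactly two unit vectors summing to $\V s$, forcing $\{\V w_i,\V w_j\} = \{\V w_k,\V w_l\}$ as unordered pairs — but then consider one of the \emph{other} two pairings, say $\{\V w_i, \V w_k\}$; since $\V w_i = \V w_k$ or $\V w_i = \V w_l$, in either case that pairing contains two equal vectors, hence has sum-vector of norm $2 \geq \|\V s\|$... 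I would need to be careful here and instead directly derive a contradiction with minimality by computing: the alternative pairing $\{\{\V w_i,\V w_k\},\{\V w_j,\V w_l\}\}$ has $\V w_i\cdot\V w_k$ which I can compare to $\V w_i\cdot\V w_j$ using that all four vectors lie on the circle and $\V w_i,\V w_j$ (resp.\ $\V w_k,\V w_l$) are symmetric about $\V s$. Writing $\V w_i = \V s/2 + \V t$, $\V w_j = \V s/2 - \V t$ with $\V t \perp \V s$ and similarly $\V w_k = \V s/2 + \V t'$, $\V w_l = \V s/2 - \V t'$, the unit-length condition gives $\|\V t\| = \|\V t'\|$, so either $\V t = \V t'$ (pairs identical) or $\V t = -\V t'$ (pairs identical with labels swapped) or... no: $\V t$ and $\V t'$ are both vectors perpendicular to $\V s$ of the same length, so in the plane $\V t' = \pm\V t$. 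Thus $\{\V w_i,\V w_j\} = \{\V w_k,\V w_l\}$ regardless, meaning the four vectors are $\V w, \V w', \V w, \V w'$ (two vectors each appearing twice). Then another pairing groups the two copies of $\V w$ together, with sum $2\V w$ of norm $2$; minimality forces $\|\V s\| = 2$, so $\V w_i = \V w_j = \V s/2$ and all four equal $\V s/2$, whose sum is $2\V s \neq \Zero$, contradiction. Hence $\V s = \Zero$, i.e.\ $\V w_j = -\V w_i$ and $\V w_l = -\V w_k$, as claimed.

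The hardest part will be handling the degenerate subcases cleanly — in particular when $\|\V s\| = 2$ (the two vectors in a pair coincide) and when several of the $\V w_i$ are equal — so that the minimality argument does not have a gap. An alternative, possibly cleaner, route I would consider is purely trigonometric: write $\V w_i = (\cos\theta_i, \sin\theta_i)$, so $\sum \cos\theta_i = \sum\sin\theta_i = 0$ means $\sum e^{\mathrm i\theta_i} = 0$ in $\I C$; then $e^{\mathrm i\theta_1} + e^{\mathrm i\theta_2} = -(e^{\mathrm i\theta_3}+e^{\mathrm i\theta_4})$, and using $e^{\mathrm i a}+e^{\mathrm i b} = 2\cos\!\big(\tfrac{a-b}{2}\big)e^{\mathrm i(a+b)/2}$, I can match moduli and arguments to force either $\theta_1 = \theta_3 + \pi, \theta_2 = \theta_4 + \pi$ (the desired opposite-pair conclusion after relabeling) or $\{\theta_1,\theta_2\} = \{\theta_3,\theta_4\}$, and in the latter case reapply the argument to a different pairing. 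I would present whichever version is shortest, likely the complex-exponential one, since it makes the ``two unit vectors with a given sum'' rigidity transparent and confines all case analysis to elementary identities about $\cos$ and $e^{\mathrm i\theta}$.
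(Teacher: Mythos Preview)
There is a sign slip that derails the main argument. From $\sum_i \V w_i=\Zero$ you correctly get $\V w_i+\V w_j=-(\V w_k+\V w_l)$, so if one pair sums to $\V s$ the other sums to $-\V s$, not to $\V s$ as you then write (``$\V w_i+\V w_j=\V s=\V w_k+\V w_l$''). With the correct sign your own $\V t,\V t'$ decomposition reads $\V w_k=-\V s/2+\V t'$, $\V w_l=-\V s/2-\V t'$ with $\|\V t'\|=\|\V t\|$, hence $\V t'=\pm\V t$, and therefore $\{\V w_k,\V w_l\}=\{-\V w_i,-\V w_j\}$ --- which is exactly the desired conclusion, with no appeal to minimality. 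The minimality device, the case analysis about repeated vectors, and the complex-exponential alternative are all detours created by the sign error.

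This corrected argument is precisely the paper's one-line proof: either $\V s:=\V w_1+\V w_2=\Zero$ (and then $\V w_3+\V w_4=\Zero$ too, done), or $\V s\ne\Zero$, and since a nonzero vector in $\I R^2$ can be written as an unordered sum of two unit vectors in at most one way, the pair $\{\V w_3,\V w_4\}$ summing to $-\V s$ must equal $\{-\V w_1,-\V w_2\}$.
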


\bcpf 
The non-trivial direction of the claim can be derived by observing that, up to a permutation of indices, we can assume that $\V v:=(x_1,y_1)+(x_2,y_2)$ is a non-zero vector while, in general, there is at most one way to write $-\B v\in\I R^2\setminus\{\Zero\}$ as the unordered sum of two unit vectors. Thus the other two vectors must be $(-x_1,-y_1)$ and $(-x_2,-y_2)$, as desired.\ecpf

Recall that $n\ge 1$ is the smallest integer satisfying~\eqref{eq:Necessary}. Claim~\ref{cl:4Vectors} applied to $x_i:=\cos (nt_i)$ and $y_i:=\sin (nt_i)$ for $i\in [4]$ gives that, up to a permutation of indices, $(x_1,y_1)=-(x_2,y_2)$ and $(x_3,y_3)=-(x_4,y_4)$. Thus, by Proposition~\ref{pr:Fr}, the set $\Frac_4$ consists precisely of
those $(t_1,\ldots,t_4)$ such that, for some integer $n\ge 1$ and up to a permutation of indices, we have that \beq{eq:residue}
n(t_1-t_2)\equiv n(t_3-t_4)\equiv \pi \pmod {2\pi}.
\eeq  

Again, let us assume that $t_4=0$. 

First, let us show that if $t_1/\pi$ is irrational then $(t_1,\ldots,t_4)\not\in\C M'_4$.  By~\eqref{eq:residue}, we can assume that $t_2=t_1+k_2\pi/n$ and $t_3=k_3\pi/n$ for some odd integers $k_2$ and~$k_3$. Suppose for a sake of contradiction that for some measurable subset $A\subseteq \I T$ we have that $\sum_{i=1}^4 t_i.\I 1_A=1$ a.e. Take the Fourier expansion 
$$
\I 1_A(t)=\frac14+\sum_{m=1}^\infty (c_m\cos mt+s_m\sin mt).
$$ 
By the argument leading to~\eqref{eq:Necessary} and Claim~\ref{cl:4Vectors}, we see that $(c_m,s_m)$ can be non-zero only if we can split $(mt_1,\ldots,mt_4)\in\I T^4$ into two pairs, each pair having difference~$\pi$. Since $t_1/\pi$ is irrational, these pairs must be $(t_1,t_2)$ and~$(t_3,t_4)$ by~\eqref{eq:residue}. Thus $mk_i\pi/n\equiv \pi\pmod{2\pi}$ for~$i=2,3$. Clearly, the validity of these two equations is determined by the residue of $m$ modulo $n$. Since $n$ is minimal, these equations cannot both hold for any $m\in[n-1]$. Thus they can hold only if $m$ is a multiple of~$n$. This means that all non-zero Fourier terms of $\I 1_A$ have period $2\pi/n$ as functions $\I T\to\I R$. It follows that 
%the set $A$ is periodic with period $2\pi/n$.
$A= (2\pi k/n)+A$  a.e.\  for every integer~$k$ and $\I 1_A=\frac1n \sum_{k=0}^{n-1} (2\pi k/n).\I 1_A$. Thus 
\begin{eqnarray*}
	t_1.\I 1_A+\I 1_A&=&\frac1n \sum_{k=0}^{n-1} \left( (t_1+2\pi k/n).\I 1_A+(2\pi k/n).\I 1_A\right)\\
	&=&\frac1{2n} \sum_{k=0}^{n-1} (2\pi k/n).\left(t_1.\I 1_A+t_2.\I 1_A+t_3.\I 1_A+t_4.\I 1_A\right)\ =\ \frac12\quad\mbox{a.e.,}
\end{eqnarray*}
where we used that $t_1.\I 1_A+t_2.\I 1_A+t_3.\I 1_A+t_4.\I 1_A=1$ a.e.\ by the choice of $A$. We conclude that the function $2\,\I 1_A$ demonstrates that $(t_1,0)\in\Frac_2$. By the case $r=2$ that was solved earlier, this contradicts the irrationality of~$t_1/\pi$.

This gives that $\C M'_4$ is strictly smaller than $\C F_4$: for example, $(a,a+\pi,\pi,0)$ belongs to $\C F_4\setminus \C M_4'$ if $a/\pi$ is irrational.

Now, suppose that $t_1/\pi$ is rational. Let $\Gamma$ be the subgroup of $\I T$ that is generated by $t_1$, $t_2$ and $t_3$. (There is no need to add $t_4$ as it is 0.) By~\eqref{eq:residue} and the rationality of $t_1/\pi$, the group $\Gamma$ is finite. Of course, if $4$ does not divide its order $|\Gamma|$ then there is no $\B t$-division even if a null set can be removed. So suppose that $|\Gamma|=4m$ for some integer~$m$, i.e.\ that $\Gamma$ is the cyclic group of order~$4m$. 
%(Indeed, if $t\in (0,2\pi)$ is the smallest non-zero element of $\Gamma$ then every other element of is an integer multiple of~$t$.) 
For $i\in [4]$, let $k_i\in \{0,\ldots,4m-1\}$ satisfy that $t_i=\frac{\pi k_i}{2m}$. Let $\B k:=(k_1,\ldots,k_4)$. Let us say that the cyclic group $\I Z_{4m}$, that consists of integer residues modulo $4m$, is \emph{$\B k$-divisible} if there is a subset $A\subseteq \I Z_{4m}$ such that the sets $k_i+A$, $i\in [4]$, partition~$\I Z_{4m}$. Of course, such a set $A$ must have exactly $m$ elements.

The following claim implies in particular that $\Borel_4=\C M_4=\C M'_4$.

\begin{claim}\label{cl:4m} If $\I Z_{4m}$ is $\B k$-divisible then $\B t\in\C B_4$; otherwise, $\B t\not\in \C M'_4$.\end{claim}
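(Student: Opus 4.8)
The plan is to transfer divisibility back and forth between the finite cyclic group $\I Z_{4m}$ and the circle $\I T$, exploiting that the group $\Gamma$ generated by $t_1,t_2,t_3$ is cyclic of order $4m$ and that each $t_i$ corresponds to the residue $k_i \in \I Z_{4m}$ under the isomorphism sending the generator $\pi/(2m)$ to $1$. For the first (positive) direction, suppose $A_0 \subseteq \I Z_{4m}$ witnesses the $\B k$-divisibility of $\I Z_{4m}$. I would lift $A_0$ to the circle by setting $A := \bigcup_{a \in A_0} \bigl(\frac{\pi a}{2m} + [0, \frac{\pi}{2m})\bigr) \subseteq \I T$, a finite union of half-open arcs, hence Borel. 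Since adding $t_i = \frac{\pi k_i}{2m}$ to such a union of ``standard'' arcs just translates the index set $A_0$ by $k_i$ inside $\I Z_{4m}$ (the arcs tile $\I T$ and translation by a multiple of $\frac{\pi}{2m}$ permutes them), the translates $t_1 + A, \dots, t_4 + A$ partition $\I T$ exactly because $k_1 + A_0, \dots, k_4 + A_0$ partition $\I Z_{4m}$. Hence $\B t \in \C B_4$.

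For the second (negative) direction, suppose $\I Z_{4m}$ is not $\B k$-divisible; I must show $\B t \notin \C M'_4$, i.e.\ there is no measurable $A \subseteq \I T$ whose translates $t_i + A$ are pairwise disjoint and cover $\I T$ up to a null set. The key observation is that all four $t_i$ lie in the finite subgroup $\frac{\pi}{2m}\I Z_{4m}$, so the hypothesis $\sum_{i=1}^4 t_i . \I 1_A = 1$ a.e.\ is ``blind'' to structure finer than the $4m$ cosets of that subgroup. Concretely, following the averaging trick already used in the $t_1/\pi$-irrational case: define $\tilde A$ by averaging $\I 1_A$ over the orbit of the subgroup $H := \{\frac{\pi j}{2m} \cdot \frac{2m}{\,}\}$— more carefully, let $g := \frac1{4m}\sum_{j=0}^{4m-1}\bigl(\tfrac{\pi j}{2m}\bigr).\I 1_A$, which is $H$-periodic (invariant under translation by $\frac{\pi}{2m}$) and still satisfies $\sum_{i=1}^4 t_i.g = 1$ a.e.\ because each $t_i$ normalizes the averaging. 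A function invariant under translation by $\frac{\pi}{2m}$ is (a.e.) pulled back from the quotient circle $\I T/H \cong \I Z_{4m}$ via the natural projection, and the values of $g$ on that quotient give a function $\bar g : \I Z_{4m} \to [0,1]$ with $\sum_{i=1}^4 (k_i + \bar g) = 1$ on $\I Z_{4m}$. Then one invokes the case $r$-analog on the finite group: over the finite abelian group $\I Z_{4m}$, the relation $\sum_i \bar g(x - k_i) = 1$ forces $\bar g$ to be (after scaling) the indicator of a $\B k$-division — or more directly, one argues via characters (Fourier analysis on $\I Z_{4m}$): the Fourier coefficient $\hat{\bar g}(\chi)$ can be nonzero only if $\sum_{i=1}^4 \chi(k_i) = 0$, and one checks that the subspace of functions supported on such characters always contains an honest $\{0,1\}$-valued division when it contains any valid $\bar g$; but since $\I Z_{4m}$ is not $\B k$-divisible, no $\{0,1\}$-valued solution exists, and a short argument upgrades this to rule out all solutions (e.g.\ the original $A$ itself, if measurable, would after the averaging yield a genuine partition of $\I Z_{4m}$ into translates, since $\bar g$ would have to be $\I 1_B/1$-like — here the integrality comes from $A$ being an honest set with $\I 1_A$ taking values in $\{0,1\}$, and averaging preserves the average but the a.e.-partition condition, restricted to the finite quotient, becomes an exact partition condition forcing $\bar g \in \{0,1\}$ after all).

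I expect the main obstacle to be the last step: cleanly deducing from ``$\I 1_A$ is a genuine indicator and $\sum_i t_i.\I 1_A = 1$ a.e.'' that the finite quotient $\I Z_{4m}$ must be $\B k$-divisible in the exact, combinatorial sense. The subtlety is that the averaging $g$ need not itself be $\{0,1\}$-valued, so I cannot directly read off a subset of $\I Z_{4m}$; instead I would argue that the a.e.\ disjointness and covering of $t_1 + A, \dots, t_4 + A$, when the $t_i$ all lie in the finite subgroup $H$ of index $4m$, descends to an exact disjointness-and-covering statement about the preimages of the $4m$ cosets — i.e.\ one partitions $\I T$ into the $4m$ ``slabs'' $\frac{\pi j}{2m} + [0,\frac{\pi}{2m})$ and notes that $A$ intersects each slab in some measurable set, and the partition condition, pushed through translation by elements of $H$, forces the collection of ``which slabs each translate hits'' to itself form an exact partition of $\I Z_{4m}$; picking any slab of positive measure in $A$ and tracking it yields the required subset of $\I Z_{4m}$, contradicting non-$\B k$-divisibility. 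This slab-tracking argument is the technical heart and where care is needed to handle null sets correctly.
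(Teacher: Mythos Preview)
Your first (positive) direction is correct and matches the paper's argument exactly: lift a witnessing subset $A_0\subseteq\I Z_{4m}$ to the Borel set $\bigcup_{a\in A_0}\bigl(\frac{\pi a}{2m}+[0,\frac{\pi}{2m})\bigr)$.

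Your second (negative) direction has a genuine gap. The averaging/Fourier detour is a dead end, as you yourself notice: the averaged function $g$ is only $[0,1]$-valued, and on the finite group $\I Z_{4m}$ a fractional solution $\bar g$ to $\sum_i\bar g(\cdot-k_i)=1$ can perfectly well exist without any $\{0,1\}$-valued solution (indeed $\bar g\equiv 1/4$ always works). Your assertion that ``the subspace \ldots always contains an honest $\{0,1\}$-valued division when it contains any valid $\bar g$'' is false in general, so this route cannot close. Your fallback ``slab-tracking'' paragraph is not a workable argument either: defining a single subset of $\I Z_{4m}$ as ``the slabs where $A$ has positive measure'' does not inherit the covering property from the a.e.\ partition condition on~$\I T$.

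The paper's argument for this direction is much simpler and avoids the issue entirely by working coset-by-coset rather than trying to extract one global subset of~$\I Z_{4m}$. Recall that in the definition of $\C M'_4$ the translates $t_i+C$ are required to be \emph{exactly} pairwise disjoint (only the covering is a.e.). So take any measurable $C$ with disjoint translates, and for an arbitrary point $t\in\I T$ look at the coset $t+\Gamma$, a set of $4m$ points. The set $A_t:=\{k\in\I Z_{4m}\mid t+\frac{\pi k}{2m}\in C\}$ then has pairwise disjoint translates $k_1+A_t,\ldots,k_4+A_t$ in $\I Z_{4m}$ (this is exact, inherited from exact disjointness of the $t_i+C$). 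Since $\I Z_{4m}$ is not $\B k$-divisible, these translates miss at least one element of $\I Z_{4m}$, so the coset $t+\Gamma$ contains at least one point not covered by $\bigcup_i(t_i+C)$. As $t$ was arbitrary, the uncovered set $B:=\I T\setminus\bigcup_i(t_i+C)$ meets every $\Gamma$-coset; hence its $4m$ translates by $\Gamma$ cover $\I T$, giving $\mu(B)\ge 2\pi/(4m)>0$. This directly contradicts $\B t\in\C M'_4$. The point you were missing is that one should slice by cosets of $\Gamma$ (finite orbits of $4m$ points) rather than by slabs, and that exact disjointness on each coset already does all the work without any measure-theoretic averaging.
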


\bcpf Suppose first that a subset $A\subseteq \I Z_{4m}$ witnesses the $(k_1,\ldots,k_4)$-divisibi\-li\-ty of~$\I Z_{4m}$. It corresponds to an $m$-subset $B\subseteq [0,2\pi)$ such that its translates by $t_1,\ldots,t_4$ partition the subgroup~$\Gamma\subseteq\I T$. Now the Borel set $C:=B+\left[0,\frac{\pi}{2m}\right)$ exhibits the $\B t$-divisibility of~$\I T$. 

Conversely, suppose that $\I Z_{4m}$ is not $\B k$-divisible. Take any measurable set $C\subseteq [0,2\pi)$ such that its translates by $t_1,\ldots,t_4$ are pairwise disjoint. Take any coset $X:=t+\Gamma\subseteq \I T$ of $\Gamma$. Define $A$ to consist of those $k\in\I Z_{4m}$ such that
$t+\frac{\pi k}{2m}\in C$ (that is, $A$ encodes the intersection of $C$ with the $\Gamma$-coset $X$). The translates of $A$ by $k_1,\ldots,k_4$ in $\I Z_{4m}$ (which correspond to the intersections $(t_i+C)\cap X$, $i\in [4]$) are pairwise disjoint and, by our assumption, omit at least one element of $\I Z_{4m}$. Thus every coset of $\Gamma$ in $\I T$ contains at least one element of
$B:=\I T\setminus (\{t_1,\ldots,t_4\}+C)$. It follows that $B$ has measure at least $2\pi/(4m)$ (as its translates by $\frac{\pi k}{2m}$ for $k\in\{0,\ldots,4m-1\}$ cover~$\I T$). This implies that $\B t\not\in\C M'_4$.\ecpf

Unfortunately, an explicit characterization of the set $\C B_4=\C M_4=\C M'_4$ for general $n$ seems to be rather messy, although it reduces to a finite case analysis for any given $\B t\in\I T^4$ by Claim~\ref{cl:4m}. So we will restrict ourselves to the special cases $n=1$ and $n=2$,
just to illustrate 
%the new phenomenon
% that $\C F_4$ is strictly larger than~$\C M'_4$ (for both $n=1$ and $n=2$) and 
that the measurable $\B t$-divisibility is not determined by the order $4m$ of the group $\Gamma$ alone (which happens already for~$n=2$).

First, assume that $n=1$. 
By~\eqref{eq:residue}, we have up to a permutation that $(t_1,t_2,t_3)\equiv (a,a+\pi,\pi)\pmod {2\pi}$ with $a\not\in \{0,\pi\}$. Thus, working inside $\I Z_{4m}$ (that is, modulo $4m$), we have that $k_2=k_1+2m$ and $k_3=2m$. Since $k_1,k_1+2m,2m$ generate $\I Z_{4m}$, we have that $k_1$ and $2m$ are coprime; in particular $k_1$ is odd. As it is easy to see $A:=\{2i\mid i\in \{0,\ldots,m-1\}\}$ witnesses the
$\B k$-divisibility of~$\I Z_{4m}$. Thus $\B t\in\Borel_4$ by Claim~\ref{cl:4m}. 

Now, assume that $n=2$. By~\eqref{eq:residue}, we have that each of the differences $k_1-k_2$ and $k_3-k_4$ modulo $4m$ is either $m$ or $3m$. We can assume that $k_3=m$ (by negating all $k_i$'s if necessary) and that $k_2=k_1+m$ (by swapping $k_1$ and $k_2$ if necessary). Note that these operations do not affect
the $\B k$-divisibility of~$\I Z_{4m}$ and thus the conclusion of Claim~\ref{cl:4m} is also unaffected. Let $k:=k_1$. Thus
$$
 \B k=(k,k+m,m,0).
$$

First, let us show that if $m=2s$ is even then $\I Z_{4m}$ is $\B k$-divisible (and thus $\B t\in\Borel_4$ by Claim~\ref{cl:4m}). It is enough to find an $s$-set $S\subseteq\{0,\ldots,m-1\}$ such that, modulo $m$, the sets $S$ and $k+S$ partition $\I Z_{m}$ (because then $A:=S\cup(2m+S)$ as a subset of
$\I Z_{4m}$ witnesses the $\B k$-divisibility of~$\I Z_{4m}$). Note that $S:=\{2ik\mid i\in\{0,\ldots,s-1\}\}$ works. (Indeed, by $\gcd(k,m)=1$ each residue modulo $m$ appears exactly once as $ik$ with $i\in \{0,\ldots,m-1\}$ and we have included every second multiple of $k$ into the set $S$.)

Finally, suppose that $m$ is odd. Recall that $\gcd(k,m)=1$. We claim that $\I Z_{4m}$ is $\B k$-divisible if and only if $k\equiv 2\pmod 4$. 

First, suppose that an $m$-set $A\subseteq\I Z_{4m}$ witnesses the $\B k$-divisibility. Since $m$ is odd, some residue $i$ modulo $m$ appears an odd number of times in~$A$. This multiplicity cannot be larger than $2$ since otherwise the translates $k_1+A,\ldots,k_4+A$ would cover the four points $i,i+m,i+2m,i+3m\in\I Z_{4m}$ at least six times. Thus the multiplicity of $i$ in $A$ modulo $m$ is exactly~1. By the commutativity of $\I Z_{4m}$, we can replace $A$ by any its translate. Thus assume that $A$ contains $0$ but none of $m$, $2m$ and~$3m$. 
Thus, by $(k_3,k_4)=(m,0)$, the set $(k_3+A)\cup (k_4+A)$ covers $0$ and $m$ but not $2m$ nor~$3m$. Since $2m\not\in A$, the only way to consistently cover $2m$ and $3m$ is that $2m-k\in A$. Now,  $\{k_1,\ldots,k_4\}+\{0,2m-k\}$ contains $2m-k$ and $3m-k$ but not $-k$ nor~$m-k$. None of the last two elements can be covered by $k_3+A$ or $k_4+A$ (as then $A$ modulo $m$ would contain $-k\pmod m$ at least twice but then the four elements $0,m,2m,3m$ would be covered at least six times, with the extra multiplicity coming from $0$ and $m$ being covered by $0\in A$ when translated by $k_3$ and $k_4$). Thus the only way to consistently cover $-k$ and $m-k$ is that $-2k\in A$. One can continue to argue in this manner, showing that for each $i\in\{0,1,\ldots\}$ we have $-2ik \in A$ and $-(2i+1)k+2m\in A$. 
As the first $m$ of these elements of $A$ are pairwise distinct (in fact, they have pairwise distinct residues modulo $m$) and $m$ is odd, it must hold that the $m$-th element, $-mk+2m$, belongs to $A$. Since $A$ does not contain any of $m$, $2m$ and $3m$, we necessarily have that  $-mk+2m \equiv 0\pmod{4m}$. This equation has $m$ solutions, namely, all $k\in\I Z_{4m}$ with $k\equiv 2\pmod 4$, giving the claim.
 
Conversely, if $k\equiv 2\pmod 4$, then the set $A$ consisting of elements $-2ik$ for $i\in \{0,\ldots,(m-1)/2\}$ and $-(2i+1)k+2m$ for $i\in \{0,\ldots,(m-3)/2\}$ shows the $\B k$-divisibility of~$\I Z_{4m}$. Indeed, note that $|A|=m$ (as its elements have different residues modulo $m$ by $\gcd(k,m)=1$) and that if we keep increasing the index $i$ beyond the stated ranges then we just repeat the elements of $A$ since $-mk+2m\equiv 0\pmod{4m}$. By ``reverse engineering'' the proof of the forward implication, we see that the translates of $A$ by $k_1,\ldots,k_4$ are pairwise disjoint and thus partition~$\I Z_{4m}$, as required.

In the initial version of the manuscript, we conjectured that if $(t_1,\ldots,t_r)\in \C M_r'$ then $(t_i-t_j)/\pi$ is rational for every $i,j\in[r]$. This conjecture was 
subsequently proved by 
Greb{\'\i}k, 
Greenfeld, Rozho\v n and Tao~\cite{GrebikGreenfeldRozhonTao22arxiv}. This implies that $\C B_r=\C M_r=\C M'_r$ for every $r$ (by an argument similar to that of Proposition~\ref{pr:FinOrbit}) and reduces the question if any given $\B t\in \I T^r$ belongs to this set to some finite case analysis.

 \section{Proof of Proposition~\ref{pr:Baire}}
 \label{se:Baire}

In order to prove Proposition~\ref{pr:Baire}, we need some auxiliary results first.
 
 \begin{lemma}\label{lm:CommonEigen} The kernels of real $n_i\times n$ matrices $A_i$, $i\in [k]$, contain a common non-zero vector $\B x\in\I R^n\setminus\{\Zero\}$ if and only if the $n\times n$ 
 	matrix $M:=\sum_{i=1}^k A_i^TA_i$ has zero determinant.\end{lemma}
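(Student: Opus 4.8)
The plan is to exploit the fact that $M$ is positive semidefinite and to recover its kernel from the quadratic form it defines. First I would observe that for every $\B x\in\I R^n$,
$$
\B x^T M\B x=\sum_{i=1}^k \B x^T A_i^T A_i\B x=\sum_{i=1}^k \|A_i\B x\|_2^2\ge 0,
$$
so $M$ is a symmetric positive semidefinite matrix. In particular, $\det M=0$ if and only if $M$ is singular, that is, if and only if there exists $\B x\in\I R^n\setminus\{\Zero\}$ with $M\B x=\Zero$.

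Next I would establish that, for a fixed $\B x$, the condition $M\B x=\Zero$ is equivalent to having $A_i\B x=\Zero$ for all $i\in[k]$. The backward implication is immediate: if each $A_i\B x=\Zero$, then $A_i^TA_i\B x=\Zero$ for every $i$, hence $M\B x=\sum_{i=1}^k A_i^TA_i\B x=\Zero$. For the forward implication, if $M\B x=\Zero$ then $0=\B x^T M\B x=\sum_{i=1}^k\|A_i\B x\|_2^2$, and since each summand is non-negative, every $\|A_i\B x\|_2$ must vanish, i.e.\ $A_i\B x=\Zero$ for all $i$.

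Combining the two equivalences finishes the proof: the matrices $A_1,\ldots,A_k$ have a common non-zero vector in all of their kernels if and only if there is $\B x\neq\Zero$ with $A_i\B x=\Zero$ for every $i\in[k]$, if and only if there is $\B x\neq\Zero$ with $M\B x=\Zero$, if and only if $M$ is singular, if and only if $\det M=0$. There is no real obstacle here; the only step worth isolating is the identity $\B x^T M\B x=\sum_{i=1}^k\|A_i\B x\|_2^2$, which simultaneously encodes the positive semidefiniteness of $M$ and the passage from the single equation $M\B x=\Zero$ to the system $A_i\B x=\Zero$.
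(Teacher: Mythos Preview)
Your proof is correct and matches the paper's argument essentially line for line: both directions hinge on the identity $\B x^T M\B x=\sum_{i=1}^k\|A_i\B x\|_2^2$, with the forward direction using that a zero sum of non-negative terms forces each $A_i\B x=\Zero$. The only cosmetic addition you make is naming the positive semidefiniteness of $M$ explicitly.
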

 
 \begin{proof}  If some non-zero $\B x\in\I R^n$ satisfies $A_i\B x=\Zero$ for every $i\in [k]$, then 
$M\B x=\sum_{i=1}^k A_i^T(A_i\B x)=\Zero$,
% $\B x$ is  a zero eigenvector of the square matrix $M$,
so the determinant of $M$ is zero. 
 
 Conversely, suppose that $M$ is singular. Choose a non-zero vector $\B x\in\I R^n$ with $M\B x=\Zero$. Then
 $$
 0= \B x\cdot M\B x=\sum_{i=1}^k \B x\cdot (A_i^TA_i\B x)= \sum_{i=1}^k (A_i\B x)\cdot (A_i\B x)= \sum_{i=1}^k \|A_i\B x\|_2^2
 $$
 and each $A_i\B x$ must be the zero vector, giving the required.\end{proof}
 
 %Shemesh~\cite{Shemesh84}

 The results of Dekker~\cite{Dekker56}, Deligne and Sullivan~\cite{DeligneSullivan83}, and Borel~\cite{Borel83} (see Theorem 6.4 in~\cite{TomkowiczWagon:btp} and the historical discussion preceding it) give the following.
 
 \begin{lemma}\label{lm:SpecialGamma} For every $d\ge 2$ and $r\ge 2$ there is a choice of rotations $\beta_1,\ldots,\beta_r\in \SO(d)$ that generate
 	the free rank-$r$ group $F_r$ such that its action on $\I S^{d-1}$ is free for even $d$ and \emph{locally commutative} for odd $d$ (meaning that every two elements of $F_r$ that have a common fixed element on $\Sd$ commute).\qed\end{lemma}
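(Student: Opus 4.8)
This is essentially a restatement of the cited theorems, so the proof consists of unwinding the statement and invoking them; I will do the unwinding and indicate where the real content lies. We may assume $d\ge 3$, since $\SO(2)$ is abelian and so contains no free group of rank $\ge 2$.

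First I would reduce to $r=2$. Suppose $\beta,\beta'\in\SO(d)$ generate a free group of rank $2$ whose action on $\Sd$ is free when $d$ is even and locally commutative when $d$ is odd. The elements $\beta^{i}\beta'\beta^{-i}$, $i\in\I Z$, form a free basis of the kernel of the homomorphism $\langle\beta,\beta'\rangle\to\I Z$ sending $\beta\mapsto 1$ and $\beta'\mapsto 0$; hence $\beta_i:=\beta^{i}\beta'\beta^{-i}$, $i\in\{1,\ldots,r\}$, freely generate a free subgroup of rank $r$, i.e.\ $\langle\beta_1,\ldots,\beta_r\rangle\cong F_r$ with the $\beta_i$ as free generators. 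Since freeness of an action and local commutativity of an action both pass to subgroups, the action of $\langle\beta_1,\ldots,\beta_r\rangle$ on $\Sd$ still has the required property. Thus it suffices to produce such a pair $\beta,\beta'$, and this is precisely the content of the theorems of Dekker~\cite{Dekker56}, Deligne and Sullivan~\cite{DeligneSullivan83}, and Borel~\cite{Borel83} (see \TW{Theorem 6.4}); their proofs --- which I would not reproduce --- are the genuinely deep part, well beyond elementary eigenvalue considerations.

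It is worth recording the algebraic shape of the conclusion, since it is what gets used afterwards. For a reduced word $w$ in $r$ letters, write $w(\gamma_1,\ldots,\gamma_r)\in\SO(d)$ for the corresponding product of the $\gamma_i^{\pm1}$ and set $A_w:=w(\gamma_1,\ldots,\gamma_r)-I_d$; since $\gamma_i^{-1}=\gamma_i^T$, the entries of $A_w$ are polynomials with integer coefficients in the $d^2r$ matrix entries of $(\gamma_1,\ldots,\gamma_r)$. Freeness of $\langle\gamma_1,\ldots,\gamma_r\rangle$ is the condition $A_w\ne 0$ for all $w\ne 1$. When $d$ is even, a rotation acts on $\Sd$ without fixed point if and only if $1$ is not one of its eigenvalues, so freeness of the action is the stronger condition $\det A_w\ne 0$ for all $w\ne 1$. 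When $d$ is odd, every non-identity rotation has a fixed point on $\Sd$, and local commutativity is the condition that, for all reduced words $w_1,w_2$ that do not commute in the free group, $w_1(\gamma_1,\ldots,\gamma_r)$ and $w_2(\gamma_1,\ldots,\gamma_r)$ have no common fixed unit vector; by Lemma~\ref{lm:CommonEigen} applied to $A_{w_1}$ and $A_{w_2}$, this is the condition $\det(A_{w_1}^{T}A_{w_1}+A_{w_2}^{T}A_{w_2})\ne 0$. Each of these is a Zariski-open condition on the irreducible variety $\SO(d)^r\subseteq\I R^{d^2r}$ (see Section~\ref{se:SO}), cut out by polynomials with integer coefficients.

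The point to stress is that a single tuple $(\beta_1,\ldots,\beta_r)$ as produced in the first step witnesses that all of these (countably many) Zariski-open conditions are non-empty. Each is then the complement of a proper closed subvariety of the irreducible variety $\SO(d)^r$; such a subvariety is nowhere dense in the Euclidean topology and of Haar measure zero, so the union of all of them is meager and null, exactly as in the proof of Lemma~\ref{lm:GNew}. Moreover, since a rational polynomial vanishing at a generic point vanishes identically, every \emph{generic} $(\gamma_1,\ldots,\gamma_r)\in\SO(d)^r$ satisfies all of these conditions at once --- equivalently, generates a free group whose action on $\Sd$ is free for even $d$ and locally commutative for odd $d$. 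This stronger statement, rather than the bare existence assertion of the lemma, is what is actually invoked in the proof of Proposition~\ref{pr:Baire}. The one genuinely hard ingredient in the whole argument is the existence of the initial tuple, which is exactly what the cited theorems provide.
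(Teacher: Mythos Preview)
Your argument matches the paper's: the paper gives no proof beyond the citations and the remark, immediately after the lemma, that ``the above result is usually stated in the special case $r=2$ as the general case easily follows by taking any subgroup of $F_2$ isomorphic to $F_r$''---precisely your reduction, which you make more explicit. Your last two paragraphs (recasting freeness and local commutativity as countably many integer-polynomial inequalities and deducing the generic case) go beyond this lemma and essentially reproduce the content of the paper's next result, Lemma~\ref{lm:GenFree}.

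One logical point worth tightening: your opening observation that $\SO(2)$ is abelian and hence contains no copy of $F_r$ for $r\ge 2$ is correct, but it does not entitle you to ``assume $d\ge 3$''---it shows the lemma as stated is \emph{false} for $d=2$. The paper does not address this either; strictly, the hypothesis should read $d\ge 3$.
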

 
 Note that the above result is usually stated in the special case $r=2$ as the general case easily follows by taking any subgroup of~$F_2$ isomorphic to~$F_r$.

 \begin{lemma}\label{lm:GenFree} If $\B\gamma=(\gamma_1,\ldots,\gamma_r)\in\SO(d)^r$ is generic, then the rotations $\gamma_1,\ldots,\gamma_r$ generate the free rank-$r$ group $F_r$ and  the corresponding action of $F_r$ on $\Sd$ is free for even $d$ and locally commutative for odd $d$.\end{lemma}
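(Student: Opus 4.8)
The plan is to capture each possible failure of the conclusion as the vanishing of a polynomial with rational coefficients on all of $\SO(d)^r$, and then to rule it out using genericity together with the distinguished $r$-tuple $(\beta_1,\ldots,\beta_r)$ supplied by Lemma~\ref{lm:SpecialGamma}. The observation that makes this work is that on $\SO(d)$ the inverse of a matrix equals its transpose, so for any reduced word $w$ in $r$ letters the map $\B\beta=(\beta_1,\ldots,\beta_r)\mapsto w(\beta_1,\ldots,\beta_r)$ has, as its $d^2$ entries, polynomials with integer --- hence rational --- coefficients in the $d^2r$ matrix entries of $\B\beta$; the same is therefore true of the entries of $w(\B\beta)-I_d$ and of determinants of matrices built from them.

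First I would show that $\gamma_1,\ldots,\gamma_r$ freely generate $F_r$. If not, some nontrivial reduced word $w$ satisfies $w(\B\gamma)=I_d$, so each of the $d^2$ rational polynomials forming the entries of $w(\B\beta)-I_d$ vanishes at $\B\gamma$; by genericity each of them vanishes identically on $\SO(d)^r$, so in particular $w(\B\beta)=I_d$ for the tuple of Lemma~\ref{lm:SpecialGamma}, contradicting that its entries freely generate $F_r$. Hence $x_i\mapsto\gamma_i$ gives a faithful embedding $F_r\hookrightarrow\SO(d)$, and from here on I would identify $F_r$ with its image and speak of the fixed points of $w(\B\gamma)$ on $\Sd$.

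Next I would treat the two action properties. For even $d$, given a nontrivial reduced word $w$, consider $q_w(\B\beta):=\det(w(\B\beta)-I_d)$; if $q_w(\B\gamma)=0$ then genericity forces $q_w\equiv 0$ on $\SO(d)^r$, so $w(\B\beta)$ fixes some unit vector while $w(\B\beta)\neq I_d$, contradicting freeness of the $\B\beta$-action, and hence $w(\B\gamma)$ has no fixed point on $\Sd$, i.e.\ the $\B\gamma$-action is free. For odd $d$, given reduced words $w,v$ with $[w,v]\neq 1$ in $F_r$, I would set
$$
p_{w,v}(\B\beta):=\det\!\bigl((w(\B\beta)-I_d)^T(w(\B\beta)-I_d)+(v(\B\beta)-I_d)^T(v(\B\beta)-I_d)\bigr),
$$
again a rational polynomial in the entries of $\B\beta$. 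By Lemma~\ref{lm:CommonEigen} (with $A_1=w(\B\beta)-I_d$ and $A_2=v(\B\beta)-I_d$), $p_{w,v}(\B\beta)=0$ exactly when $\ker(w(\B\beta)-I_d)$ and $\ker(v(\B\beta)-I_d)$ share a nonzero vector, i.e.\ when $w(\B\beta)$ and $v(\B\beta)$ have a common fixed point on $\Sd$. If $p_{w,v}(\B\gamma)=0$, then by genericity $p_{w,v}\equiv 0$ on $\SO(d)^r$, so $w(\B\beta)$ and $v(\B\beta)$ share a fixed point on $\Sd$ for the tuple of Lemma~\ref{lm:SpecialGamma}; local commutativity there forces $w(\B\beta)v(\B\beta)=v(\B\beta)w(\B\beta)$, hence $[w,v](\B\beta)=I_d$, hence $[w,v]=1$ in $F_r$ by freeness of the $\beta_i$, contradicting the choice of $w,v$. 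So $p_{w,v}(\B\gamma)\neq 0$, meaning that any two non-commuting elements of $F_r$, realised via $\B\gamma$, have no common fixed point on $\Sd$; together with the faithfulness from the previous step this is exactly local commutativity.

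I do not expect a genuine obstacle here: the whole argument is the contrapositive of genericity applied separately to each of the countably many words (resp.\ pairs of words), and the only mildly delicate points are ensuring that the relevant polynomials have rational coefficients (which comes down to inversion being transposition on $\SO(d)$) and encoding ``common fixed point on $\Sd$'' as a single polynomial equation over $\I Q$, for which Lemma~\ref{lm:CommonEigen} is tailor-made. Everything else is routine bookkeeping with reduced words.
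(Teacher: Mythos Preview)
Your argument is correct and matches the paper's proof essentially line for line: the freeness part via the $d^2$ entry polynomials of $w(\B\beta)-I_d$, and the odd-$d$ local commutativity via Lemma~\ref{lm:CommonEigen} applied to $A_i=w_i(\B\beta)-I_d$, both tested against the special tuple of Lemma~\ref{lm:SpecialGamma}. The only cosmetic difference is that you spell out the even-$d$ case with the single determinant $\det(w(\B\beta)-I_d)$, whereas the paper just says that case is ``similar''.
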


 \begin{proof} For a non-trivial reduced word $w$ in $F_r$ and $\B\beta=(\beta_1,\ldots,\beta_r)\in\SO(d)^r$, the relation $w(\B\beta)=I_d$ amounts to $d^2$ polynomial equations, with $p_{ij}(\B\beta)=0$ stating that the $(i,j)$-th entry of the corresponding product of the matrices of $\beta_i$'s and their transposes (which are equal to their inverses)  is $\I 1_{i=j}$, where $\I 1_{i=j}$ is 1 if $i=j$ and 0 otherwise. Each of these polynomials $p_{ij}$ has rational coefficients. Moreover, the $r$-tuple of matrices $\B\beta$ returned by Lemma~\ref{lm:SpecialGamma} (which, in particular, generates the free subgroup) gives a point where at least one of these polynomials is non-zero, say $p_{ij}(\B\beta)\not=0$. The polynomial $p_{ij}$ has to be non-zero also at the generic point $\B\gamma\in \SO(d)^r$ and so $w(\B\gamma)\not=I_d$. Since $w$ was an arbitrary non-trivial word, the rotations $\gamma_1,\ldots,\gamma_r$ indeed generate the free  group.
 
 Let us show the second part in the case of odd $d$ (with the case of even $d$ being similar). Suppose on the contrary that we have two reduced non-commuting words $w_1$ and $w_2$ in $F_r$ such that the corresponding elements $w_1(\B\gamma)$ and $w_2(\B\gamma)$ have a common fixed point~$\B x\in \Sd$.
 Thus the matrices $A_1:=w_1(\B\gamma)-I_d$ and $A_2:=w_2(\B\gamma)-I_d$ have $\B x\not=\Zero$ as a 
 common zero eigenvector. 
 By Lemma~\ref{lm:CommonEigen}, this property is equivalent to $\det(A_1^TA_1+A_2^TA_2)=0$, which is a polynomial equation in $\B\gamma$ with rational coefficients. 
 For the special $r$-tuple of matrices 
 $\B\beta$ 
 returned by Lemma~\ref{lm:SpecialGamma}, the matrices $B_1:=w_1(\B\beta)-I_d$ and $B_2:=w_2(\B\beta)-I_d$ cannot have  a common zero eigenvector as it would give a common fixed point for the non-commuting elements $w_1(\B\beta)$ and $w_2(\B\beta)$. Thus, we have by Lemma~\ref{lm:CommonEigen} that $\det(B_1^TB_1+B_2^TB_2)\not=0$. We have found a polynomial equality with rational coefficients that holds for $\B\gamma$ but not for $\B\beta\in\SO(d)^r$. This contradicts our assumptions that $\B\gamma\in \SO(d)^r$ is generic.\end{proof}
 
 Also, we will need the following result of Conley, Marks and Unger that directly follows (as a rather special case) from Lemmas~3.4 and~3.6 in~\cite{ConleyMarksUnger20}.
 
 \begin{theorem}[Conley, Marks and Unger~\cite{ConleyMarksUnger20}]
 	\label{th:CMU} Let $F_r$ be the free group of rank $r$ with generators $\gamma_1,\ldots,\gamma_r$ and let $a:F_r\actson X$ be a free Borel action on a Polish space~$X$. Then there is a Borel subset $A\subseteq X$ such that $\gamma_1.A,\ldots,\gamma_r.A$ are disjoint and $X\setminus \cup_{i=1}^r \gamma_i.A$ is meager.\qed
 \end{theorem}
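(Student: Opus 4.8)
The plan is to work inside the Schreier forest of the action and build $A$ by a layered (``toast''-type) recursion, using Baire category only to discard a meager set of boundary points. Since $a\colon F_r\actson X$ is free, the unlabelled Schreier graph $G$ on $X$ — joining $x$ to $\gamma_i^{\pm1}.x$ for $i\in[r]$ — is a disjoint union of copies of the Cayley graph of $F_r$ with respect to $\gamma_1,\dots,\gamma_r$, i.e.\ of the $2r$-regular tree, so every $G$-component is a tree. Restricted to one orbit, identified with $F_r$, the target is a set $\tilde A\subseteq F_r$ whose left translates $\gamma_1\tilde A,\dots,\gamma_r\tilde A$ partition $F_r$; such ``templates'' exist by an elementary argument (they underlie the paradoxical decompositions of $F_r$, and, since a free action has no fixed points, no parity restriction appears). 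The genuine difficulty is that one cannot make a \emph{Borel} choice of template per orbit (that would break the $F_r$-symmetry on a non-smooth relation), so one must instead make local choices and tolerate a meager set of points at which the covering fails — which is exactly why only a meager remainder is claimed.

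First I would equip $G$ with a \emph{Borel toast}: after passing to a comeager invariant Borel set $Y\subseteq X$ (which does not change the conclusion), an increasing sequence $\mathcal T_0\subseteq\mathcal T_1\subseteq\cdots$ of Borel families of finite connected $G$-sets (``cells'') such that the cells in $\bigcup_n\mathcal T_n$ cover $Y$, distinct cells of the same layer are $G$-far apart, and each layer-$n$ cell $K$, together with a ``collar'' of $G$-neighbours of width $w_n$, lies inside a single layer-$(n+1)$ cell, where $w_n\to\infty$ grows slowly relative to the cell diameters. One extracts such a toast from the standard Borel marker lemma: at a rapidly increasing sequence of scales take maximal discrete complete sections, form the associated clusters as cells, and arrange nestedness by refining along a subsequence; the passage to a comeager set enters only to guarantee exhaustion and to ensure that the set of points lying in the collar of their layer-$n$ cell for \emph{every} $n$ is meager.

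Next, define $A$ by recursion on $n$, maintaining the invariant that $\gamma_1.A,\dots,\gamma_r.A$ are pairwise disjoint and that $A$ placed inside any cell $K$ stays away from $\partial K$ (so it interacts with nothing outside $K$). When $K\in\mathcal T_n$ is reached, $A$ has already been fixed on the cores of the layer-$(n-1)$ cells inside $K$, which by nestedness sit deep in $K$; one then extends $A\cap K$ to all of $K$ except a width-$w_n$ collar around $\partial K$ so that every $x$ in the enlarged core of $K$ has exactly one $i\in[r]$ with $\gamma_i^{-1}.x\in A$. The combinatorial content is a finite lemma: any finite subtree $S$ of the $2r$-regular tree, with a designated deep core, admits a subset supported away from $\partial S$ whose $\{\gamma_i\}$-translates cover each core vertex exactly once and no vertex twice; one proves it by a straightforward induction over $S$, using tileability of $F_r$ together with the slack provided by the collar to discharge the obligations generated near the frontier. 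Since each layer's assignment is a Borel function of the Borel-coded toast, $A$ is Borel. Finally, a point $x\in Y$ not covered by $\bigcup_{i=1}^r\gamma_i.A$ must lie, for every $n$, in the width-$w_n$ collar of its layer-$n$ cell, and the toast was arranged so that this set is meager (here Baire category, via a Kuratowski--Ulam argument across the orbit equivalence relation, plays the role of the Borel--Cantelli estimate one would use in an ``almost everywhere'' version). Hence $\gamma_1.A,\dots,\gamma_r.A$ are disjoint and $X\setminus\bigcup_{i=1}^r\gamma_i.A$ is meager.

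The main obstacle is the first technical step: producing the Borel toast with the required growth and nestedness, and equivalently verifying that the set of perpetually uncovered points is meager; granting the toast, the remainder is elementary tree combinatorics and routine Borel bookkeeping. (Alternatively — and this is in essence how Conley, Marks and Unger proceed — one quotes their general theorem that every abstract system of congruences which is realizable with arbitrary pieces over the free part of a Borel action is realizable with Baire-measurable pieces; the assertion here is the system of congruences $X=\gamma_1.A\sqcup\dots\sqcup\gamma_r.A$.)
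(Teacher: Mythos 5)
The paper contains no proof of this statement: it is quoted from Conley--Marks--Unger \cite{ConleyMarksUnger20} as a special case of their Lemmas~3.4 and~3.6, which is why it appears there with a tombstone and no argument. So your closing parenthetical --- quote their general Baire-measurable realization theorem for the single system of congruences $X=\gamma_1.A\sqcup\cdots\sqcup\gamma_r.A$, whose abstract realizability over free actions is Dekker's theorem --- is exactly the route the paper takes, and is all that is needed for the paper's purposes.

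Your primary, self-contained sketch, however, has a genuine gap at its load-bearing step, namely the Borel toast. The structure you require (finite connected cells, same-level cells far apart, each level-$n$ cell plus a collar of width $w_n\to\infty$ contained in a single level-$(n+1)$ cell, cells covering) cannot be extracted ``from the standard Borel marker lemma'' by taking maximal discrete complete sections at increasing scales and refining along a subsequence: nothing makes the scale-$n$ clusters nest inside scale-$(n+1)$ clusters, and no Borel refinement can force this on an invariant Borel set in general, because such a structure renders the orbit equivalence relation hyperfinite (once $w_n$ exceeds the distance between two orbit-equivalent points they lie in a common cell), which fails for many free Borel actions of $F_r$, e.g.\ the free part of a Bernoulli shift with its invariant measure. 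Hence the toast can exist only after the passage to a comeager invariant set, and producing it there --- together with the claim that the points lying in the width-$w_n$ collar of their level-$n$ cell for \emph{every} $n$ form a meager set --- is precisely the Baire-category content of the theorem; your sketch relocates the entire difficulty into this unproved assertion, and the ingredient that would fill it (generic hyperfiniteness in the style of Hjorth--Kechris, or the Marks--Unger matching argument that Conley--Marks--Unger actually use) is a genuinely different, category-specific argument, not a marker-lemma routine. Secondarily, the finite extension lemma (extend the partial solution so that the enlarged core is covered exactly once, all translates stay pairwise disjoint, and the new points avoid the boundary collar) is plausible on trees given Dekker's tiling of $F_r$ and the collar slack, but it is asserted rather than proved, and it is where all the disjointness bookkeeping actually lives.
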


 \begin{proof}[Proof of Proposition~\ref{pr:Baire}.] We have to show that 
 if %$r,d\ge 2$ and 
an $r$-tuple $\B\gamma=(\gamma_1,\ldots,\gamma_r)\in\SO(d)^r$ is generic then there is a $\B\gamma$-division of~$\I S^{d-1}$ with pieces that have the property of Baire.
 
 By Lemma~\ref{lm:GenFree}, the elements $\gamma_1,\ldots,\gamma_r\in\SO(d)$  generate a free (resp.\ locally commutative) action $a$ of the free group $F_r$ on the sphere~$\Sd$ when $d$ is even (resp.\ odd). 
 The more general Corollary 5.12 in \cite{TomkowiczWagon:btp} (which is attributed in \cite{TomkowiczWagon:btp} to Dekker~\cite{Dekker56,Dekker58}) directly gives that $\Sd$ is  $\B\gamma$-divisible, that is, there is a subset $B\subseteq \Sd$ with $\gamma_1.B,\ldots,\gamma_r.B$ partitioning the sphere.

For every $\gamma\in \SO(d)\setminus\{I_d\}$, the set of its fixed points on $\I S^{d-1}$ is closed (as the preimage of $\Zero$ under the continuous map that sends $\B x\in\I S^{d-1}$ to $\gamma.\B x-\B x\in\I R^d$) and has empty relative interior (for otherwise one can choose $d$ linearly independent vectors fixed by $\gamma$, contradicting $\gamma\not=I_d$). In particular, this set is meager. Since the group $F_r$ is countable, the \emph{free part} $X$ of the action $a$ (which consists of $\B x\in\I S^{d-1}$ such that $w.\B x\not=\B x$ for each non-trivial $w\in F_r$) is co-meager. Also, it is easy to show that the free part $X$ is a Borel subset of the sphere (see e.g.~\cite[Lemma~4.4]{Pikhurko21bcc}). 
 
Theorem~\ref{th:CMU}, when applied  to the free action of $F_r$ on $X$, gives a  Borel set $A\subseteq X$ with its translates $\gamma_1.A,\ldots,\gamma_r.A$ being disjoint and $Z:=\I S^{d-1}\setminus \cup_{i=1}^r \gamma_i.A$ being meager. We can additionally assume that $Z$ is $a$-invariant: its saturation $[Z]:=\cup_{w\in F_r} w.Z$ is still meager (since the countable group $F_r$ acts by homeomorphisms) so we can  replace $A$ by  $A\setminus [Z]$ without violating the conclusion of Theorem~\ref{th:CMU}.
 
 Now, we can combine the Borel $\B\gamma$-division of $\I S^{d-1}\setminus Z$ given by Conley, Marks and Unger~\cite{ConleyMarksUnger20} with the $\B\gamma$-division of Dekker~\cite{Dekker56,Dekker58} restricted to~$Z$. Formally, take $C:=A\cup (B\cap Z)$. The set $C$, as the union of a Borel set and a meager set, has the property of Baire while its translates $\gamma_1.C,\ldots,\gamma_r.C$ partition~$\Sd$ 
 by the invariance of~$Z$.\end{proof}

\section{Proof of Lemmas~\ref{lm:GNew} and~\ref{lm:GPChar}}
\label{se:G}
 
This section is dedicated to proving Lemmas~\ref{lm:GNew} and~\ref{lm:GPChar}. Their proofs are rather  technical; this is why we postponed them until the very end.

 \subsection{Some definitions and results from algebraic geometry}
 \label{se:AlgGeo}

 %Zariski topology
 
In this section we present some definitions and results from algebraic geometry that we need. We will follow the notation from the book by Hassett~\cite{Hassett07itag} to which we refer for missing details (and for a nice concrete introduction to most results needed here).

 A field extension $K\hookrightarrow L$ is called \emph{algebraic} if every $x\in L$ is \emph{algebraic} over $K$, that is, satisfies a non-trivial polynomial equation with coefficients in~$K$. Some easy but very useful facts (\ha{Proposition A.16}) are that, for an arbitrary field extension $K\hookrightarrow L$,
 \beq{eq:16.3}
 \mbox{the elements of $L$ that are algebraic over $K$ form a field}
 \eeq
  %(that is, the sum, product or the inverse of algebraic elements is algebraic) 
  and, for another field extension $L\hookrightarrow M$, 
 \beq{eq:16.4}
 \mbox{ 
 	if $K\hookrightarrow L$ and $L\hookrightarrow M$ are both algebraic
 	%\ $\Rightarrow$\ 
 	then $K\hookrightarrow M$ is algebraic.
 }
 \eeq
 %The algebraic closure of $K$ is denoted by $\O K$.
 
 Let us fix a field $K$.
  
 By a \emph{variety} we mean a subset $X$ of some affine space $K^n$ which is closed in the \emph{Zariski topology}, that is,  $X$ is equal to 
 $$
 V_K(\C F):=\{\B x\in K^n\mid \forall\, f\in\C F\ f(\B x)=0\}
 $$ 
 for some family $\C F\subseteq K[\B x]$ of polynomials where $\B x:=(x_1,\ldots,x_n)$. Then the \emph{coordinate ring} of $X$ is $K[X]:=K[\B x]/I(X)$, where 
 $$I(X):=\{f\in K[\B x]\mid \forall\,\B x\in X\ f(\B x)=0\}
 $$
 denotes the \emph{ideal} of the variety~$X\subseteq K^n$.

 We call a variety $X\subseteq K^n$  \emph{irreducible} if we cannot write $X=X_1\cup X_2$ for some varieties $X_1,X_2\subsetneq X$. 
 This is equivalent to the statement that the ideal $I(X)\subseteq K[\B x]$ is prime (\ha{Theorem~6.5}). Then $K[X]$ is a domain so we can define its fraction field, which is called the \emph{function field} of $X$ and is denoted by $K(X)$. Elements of $K[X]$ (resp.\ $K(X)$) can be viewed as the restrictions of polynomial (resp.\ rational) functions to $X$ modulo identifying functions that coincide on $X$.
 
 The \emph{dimension} $\dim X$ of an irreducible variety $X$ is the cardinality of a \emph{transcendence basis} for the field extension $K\hookrightarrow K(X)$, which is a collection of  algebraically independent (over $K$) elements $z_1,\ldots,z_k\in K(X)$ such that $K(X)$ is algebraic over $K(z_1,\ldots,z_k)$, the smallest subfield of $K(X)$ containing $K\cup\{z_1,\ldots,z_k\}$. By \ha{Proposition 7.15}, a transcendence basis exists and every two transcendence bases have the same cardinality.

 Every variety $X$ can be written as a finite union $X_1\cup\ldots\cup X_m$ of irreducible varieties  (\ha{Theorem 6.4}). (In fact, this decomposition, if irredundant, is unique up to a permutation of indices.) Then the \emph{dimension} of $X$ is defined as 
 %the maximum of dimensions of $\dim X_i$, 
 $\dim X:=\max\{\dim X_i\mid i\in [m]\}$. By \cu{Corollary 2.68}, one can equivalently define
 \beq{eq:TopDim}
 \dim X:=\max\{k\mid \mbox{$\exists$ irreducible varieties $Y_1,\ldots Y_k$ with $\emptyset \subsetneq Y_1\subsetneq\ldots\subsetneq Y_k\subseteq X$}\}.
 \eeq
 
 We will also need the following easy result.
 % which can be proved by an induction on $n$ (cf \cu{Theorem 1.4}). 
 
 \begin{lemma}\label{lm:1.4} If $X_1,\ldots,X_n$ are infinite subsets of a field $K$ and a polynomial $f\in K[x_1,\ldots,x_n]$ vanishes on each element of $X_1\times\ldots\times X_n$, then $f$ is the zero polynomial.\end{lemma}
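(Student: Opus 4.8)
The plan is to prove this by induction on the number $n$ of variables, reducing at each stage to the one-variable case.

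For the base case $n=1$, I would invoke the standard fact that a non-zero polynomial $f\in K[x_1]$ over a field $K$ has at most $\deg f$ roots in $K$. Since $X_1$ is infinite and $f$ vanishes on all of $X_1$, this forces $f$ to be the zero polynomial. (Note that the hypothesis that $X_1$ is infinite already forces $K$ to be infinite, so there is no degenerate case to worry about.)

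For the inductive step, suppose $n\ge 2$ and the statement holds for $n-1$ variables. I would write $f$ as a polynomial in $x_n$ with coefficients in the ring $R:=K[x_1,\ldots,x_{n-1}]$, say $f=\sum_{j=0}^{d} g_j\, x_n^{\,j}$ with $g_j\in R$. Fix an arbitrary tuple $\B a=(a_1,\ldots,a_{n-1})\in X_1\times\ldots\times X_{n-1}$ and specialise the first $n-1$ variables to $\B a$: the resulting univariate polynomial $f(a_1,\ldots,a_{n-1},x_n)=\sum_{j=0}^d g_j(\B a)\,x_n^{\,j}\in K[x_n]$ vanishes on the infinite set $X_n$, so by the base case it is identically zero, whence $g_j(\B a)=0$ for every $j\in\{0,\ldots,d\}$. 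Since $\B a$ was an arbitrary element of $X_1\times\ldots\times X_{n-1}$, the inductive hypothesis applied to each $g_j\in K[x_1,\ldots,x_{n-1}]$ yields $g_j=0$ for all $j$, and therefore $f=0$, completing the induction.

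There is no real obstacle here; this is a routine argument. The only points worth flagging are that we use that $K$ is a field (more precisely, that $K[x_n]$ is an integral domain, so a univariate polynomial has no more roots than its degree), and that it is the sets $X_i$ — not merely $K$ — that are required to be infinite, which is exactly what makes the specialisation argument go through.
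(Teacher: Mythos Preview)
Your proof is correct and follows essentially the same induction as the paper's: reduce to one variable by expanding in $x_n$ and apply the base case together with the inductive hypothesis. The only cosmetic difference is that the paper phrases the inductive step contrapositively (pick a highest nonzero coefficient, find a point where it survives), whereas you argue directly that every coefficient vanishes; the content is identical.
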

 
 \begin{proof} We use induction on $n$. The base case $n=1$ can be proved by induction on the degree of the univariate polynomial $f(x_1)$ by factoring out a linear factor corresponding to a root of~$f$.
 
 Let $n\ge 2$. Expand $f(x_1,\ldots,x_n)=\sum_{i=0}^m c_i x_n^i$, with $c_i\in K[x_1,\ldots,x_{n-1}]$ and $c_m\not=0$. By induction, there is $(a_1,\ldots,a_{n-1})$ in $X_1\times\ldots \times X_{n-1}$ with $c_m(a_1,\ldots,a_{n-1})\not=0$. Thus $f(a_1,\ldots,a_{n-1},x_n)$ is a non-zero polynomial of $x_n$ so it cannot vanish
 on $X_n$ by the base case $n=1$.\end{proof}

 \subsection{Variety $\SO(d;K)^r$}
 \label{se:SO}

%Let $d\ge 2$ and $r\ge 1$ be integers. 
In this section we show in particular that $\SO(d)^r$, as a variety in $\I R^{d^2r}$, is irreducible and that the set of entries above the diagonals forms a transcendence basis; in particular, the dimension of $\SO(d)^r$ is ${d\choose 2}r$. In fact, we will need an extension of this result, where  the underlying field can be different from $\I R$, for the proof of Lemma~\ref{lm:GPChar} (even though the statement of Lemma~\ref{lm:GPChar} deals only with the real case).

 Let $d\ge1$ be an integer and $K$ be a field.
 %, $\I Q\subseteq k\subseteq \I C$. 
 Consider the affine space $K^{d\times d}$ of all $d\times d$ matrices with entries in $K$, writing its elements as $\gamma=(\gamma_{i,j})_{i,j\in[d]}$. Let the \emph{special orthogonal variety}  over $K$  be the variety  $\SO(d;K):=V_K(I_\SO)\subseteq K^{d\times d}$ defined by the ideal
 \beq{eq:I}
 I_\SO:=\big\langle\, (u_i)_{i\in [d]},\,(f_{ij})_{1\le i<j\le d},\,\det(\gamma)-1\,\big\rangle\subseteq K[\gamma],
 \eeq
 where $u_{i}:=\gamma_{i,1}^2+\ldots+\gamma_{i,d}^2-1$ encodes the fact that each row is a unit vector (when $K\subseteq\I R$), $f_{i,j}:=\gamma_{i,1}\gamma_{j,1}+\ldots+\gamma_{i,d}\gamma_{j,d}$ encodes the orthogonality of the $i$-th and $j$-th rows while the last constraint states that the
 determinant of $\gamma$ is $1$. Note that the ``orthonormality'' constraints force $\gamma$ to have determinant $-1$ or $1$, which follows from
 \beq{eq:det=pm1}
 (\det(\gamma))^2=\det(\gamma^T\gamma)
 %=\det(\gamma^T)\det(\gamma)=
 \equiv \det (I_d)=1\pmod{\langle\, (u_i)_{i\in [d]},\,(f_{ij})_{1\le i<j\le d}\,\rangle}.
 \eeq
 The matrix multiplication makes $\SO(d;K)$ a group.  
 If $K=\I R$ then we get the familiar group $\SO(d)$ of  special  orthogonal real $d\times d$ matrices (and the shorthand $\SO(d)$ will
 always be reserved for the real variety~$\SO(d;\I R)$).
 
Take any integer $r\ge 1$. The $r$-th power $\SO(d;K)^r=\SO(d;K)\times\ldots\times \SO(d;K)$ is a variety in $K^{d^2r}$ since a product of Zariski closed sets is Zariski closed (or since one can write the explicit equations defining~$\SO(d;K)^r$).
 %Its element is an $r$-tuple $(\B\gamma_1,\ldots,\B\gamma_r)$. We will denote the coordinates of $\B\gamma_s$ as $(\ga{s}{ij}$
 
 For $(\gamma_1,\ldots,\gamma_r)\in\SO(d;K)^r$, let 
 $$
 \gamma_U:=(\ga{s}{i,j}\mid s\in [r],\ 1\le i<j\le d),
 $$ 
 be the sequence of the ${d\choose 2}r$ entries strictly above the diagonals. We call these entries \emph{upper}. 
  For notational convenience, we fix an ordering of the coordinates of $K^{d^2r}$ so that all \emph{non-upper} entries  (that is, those on or below the diagonals) come before all upper ones; thus when we write a vector of length $d^2r$
 as $(\B x,\B y)$ then we mean that $\B y$ is the upper part.
 
 \begin{lemma}\label{lm:upper}
 	For every subfield $K\subseteq \I C$, the variety $X:=(\SO(d;K))^r\subseteq K^{rd^2}$ is irreducible, has dimension ${d\choose 2}r$ and the set of upper coordinates forms a transcendence basis of the function field $K(X)$ over $K$.
 \end{lemma}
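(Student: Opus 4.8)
The plan is to prove, first, that $X:=\SO(d;K)^r\subseteq K^{rd^2}$ is irreducible of dimension ${d\choose2}r$, by induction on $d$ with $r$ fixed; the case $d=1$ is trivial since then $X$ is the single point $(I_1,\dots,I_1)$. For the inductive step let $S:=V_K(x_1^2+\dots+x_d^2-1)\subseteq K^d$ and consider the \emph{first-column morphism} $\pi\colon X\to S^r$ sending $(\gamma_1,\dots,\gamma_r)$ to the $r$-tuple of first columns of the $\gamma_s$. For $d\ge2$ the polynomial $x_1^2+\dots+x_d^2-1$ is irreducible over $\overline K$ (a non-degenerate quadratic form of rank $\ge2$ does not factor into linear forms in characteristic $0$), so $S$ is irreducible of dimension $d-1$ and hence so is $S^r$, of dimension $r(d-1)$. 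The morphism $\pi$ is surjective (every unit vector extends to a special orthogonal matrix, cf.\ Claim~\ref{cl:e}), and its fibre over $(v_1,\dots,v_r)$, after left-multiplying the $s$-th block by any fixed special orthogonal matrix with first column $v_s$, is identified with $\bigl(\{I_1\}\oplus\SO(d-1;K)\bigr)^r\cong\SO(d-1;K)^r$, which is irreducible of dimension ${d-1\choose2}r$ by the inductive hypothesis. By the standard principle that a dominant morphism onto an irreducible variety all of whose fibres are irreducible of one common dimension $e$ has irreducible source of dimension $\dim(\text{base})+e$ (see \cu{Chapter 2}), $X$ is irreducible of dimension $r(d-1)+{d-1\choose2}r={d\choose2}r$.

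Granting this, $K(X)$ is defined and $\dim X={d\choose2}r$, which is exactly the number of upper coordinates. It then suffices to show that every coordinate function on $X$ is algebraic over the subfield $K(\gamma_U)\subseteq K(X)$ generated by the upper coordinates: picking a maximal algebraically independent subset $T$ of the upper coordinates, the field $K(\gamma_U)$ and hence $K(X)$ is algebraic over $K(T)$ by~\eqref{eq:16.4}, so $T$ is a transcendence basis, whence $|T|=\dim X={d\choose2}r$ forces $T$ to be all of $\gamma_U$. To prove the algebraicity, fix one block and write $\gamma_{i,j}$ for its entries, and treat the columns in the order $j=d,d-1,\dots,1$. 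The non-upper entries of column $j$, namely $\gamma_{j,j},\dots,\gamma_{d,j}$, satisfy the single quadratic relation $\sum_i\gamma_{i,j}^2=1$ together with the $d-j$ linear relations $\sum_i\gamma_{i,j}\gamma_{i,k}=0$ for $j<k\le d$ (orthogonality to later columns), whose coefficients are among the $\gamma_{i,k}$ with $i\ge j$ and $k>j$ — that is, upper coordinates and entries from columns $j+1,\dots,d$ already treated. The $(d-j)\times(d-j)$ minor of this linear system on the unknowns $\gamma_{j+1,j},\dots,\gamma_{d,j}$ evaluates to $1$ at the point $(I_d,\dots,I_d)\in X$, hence is a nonzero element of $K[X]$; on the dense open subset of $X$ where it is nonzero, Cramer's rule expresses $\gamma_{j+1,j},\dots,\gamma_{d,j}$ as affine functions of $\gamma_{j,j}$ with coefficients in the field generated by the upper coordinates and the already-treated entries, and substitution into $\sum_i\gamma_{i,j}^2=1$ gives a polynomial relation for $\gamma_{j,j}$ over that field whose leading coefficient is again nonzero (it equals $1$ at $(I_d,\dots,I_d)$, where all the relevant off-diagonal entries vanish). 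Downward induction on $j$, together with transitivity of algebraic extensions~\eqref{eq:16.3}--\eqref{eq:16.4}, then shows that every entry of the block, and hence (doing this for each block) every coordinate function on $X$, is algebraic over $K(\gamma_U)$.

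The step I expect to be the main obstacle is the passage to a base field $K$ that is not algebraically closed. The fibration argument above really proves irreducibility of $\SO(d;\overline K)^r$ — equivalently, that the radical of the defining ideal in the polynomial ring over $\overline K$ in the $rd^2$ matrix variables is prime — whereas the statement is about the $K$-variety $X=V_K(\text{defining ideal})$, whose irreducibility amounts to the contraction of that prime to the polynomial ring over $K$ being prime. This transfer needs the $K$-points of $\SO(d;\overline K)^r$ to be Zariski-dense in it; that density can be obtained along the same inductive lines (the smooth quadric $S$ has the $K$-point $\B e_1$, hence is $K$-rational with dense $K$-points, and one propagates this up $\pi$), or more concretely from the Cayley transform $A\mapsto(I_d-A)(I_d+A)^{-1}$, which sends rational skew-symmetric matrices to $\I Q$-points of $\SO(d)$ and gives a birational map from the affine space of skew-symmetric matrices to $\SO(d)$. (The Cayley chart also exhibits $K(X)$ directly as a purely transcendental extension of $K$ generated by the upper entries of the $r$ skew-symmetric parameters, an independent route to the dimension count; and Lemma~\ref{lm:1.4}, applied to the image of the ``upper-coordinate'' morphism together with Claim~\ref{cl:e}, gives yet another way to see that the upper coordinates are algebraically independent.) Beyond this, the care points are citing the precise fibre-dimension/irreducibility statement from \ha{Chapter 7} and \cu{Chapter 2}, and checking the minor computations at $(I_d,\dots,I_d)$ that keep the column elimination non-degenerate.
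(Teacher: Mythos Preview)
Your proposal is essentially correct and takes a genuinely different route to irreducibility than the paper does.

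\textbf{Irreducibility.} The paper writes every $\gamma\in\SO(d;K)$ as a product of exactly $2d$ reflections, obtaining a surjective rational map $f^r$ from an open subset of the irreducible affine space $K^{2d^2r}$ onto $X$; irreducibility of $X$ then follows in one stroke, uniformly for any $K$ with $2\ne0$, without induction and without having to pass through~$\overline K$. Your fibration-by-first-columns argument is more structural but costs more: the ``standard principle'' you invoke (irreducible base, irreducible equidimensional fibres $\Rightarrow$ irreducible total space) is stated in Shafarevich only for projective varieties (i.e.\ closed maps), and in the affine setting one usually imposes openness or flatness. Here $\pi$ is smooth between smooth varieties over $\overline K$, hence flat and open, so the principle does apply --- but this should be said. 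You are right that the transfer to non-closed $K$ is the delicate point; the density/Cayley remarks you sketch do work, though the Cayley chart only covers $\{\det(I+\gamma)\ne0\}$, so you must argue separately (e.g.\ via translates of the chart, or via your inductive fibration and rationality of the quadric) that $K$-points are dense in $\SO(d;\overline K)^r$.

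\textbf{Transcendence basis.} Your algebraicity argument (Cramer on the orthogonality system, leading coefficient tested at the identity) is the same as the paper's, with columns in place of rows --- a cosmetic difference. Where you genuinely diverge is in algebraic \emph{independence}: the paper proves it directly by showing the projection to the upper coordinates has image containing an open ball (Claim~\ref{cl:e}) and invoking Lemma~\ref{lm:1.4}, whereas you deduce it from $\dim X=\binom{d}{2}r$ via the counting argument ``a generating set of size $\dim X$ for an algebraic extension must already be a basis''. Your route is slicker, but it loads all the weight onto the dimension computation, which in turn rests on the fibration step above.

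\textbf{Minor points.} Your reference to Claim~\ref{cl:e} for the surjectivity of $\pi$ is off: that claim extends small \emph{upper} entries, not a unit first column. The surjectivity you need is the elementary fact that any $v\in K^d$ with $v\cdot v=1$ extends to an element of $\SO(d;K)$, which follows from a single reflection sending $\B e_1$ to $v$ composed with $\mathrm{diag}(1,\ldots,1,-1)$ to fix the determinant. Also, ``$S$ irreducible $\Rightarrow$ $S^r$ irreducible'' fails over non-closed fields in general; what you actually use (and establish by checking irreducibility over~$\overline K$) is that $S$ is \emph{geometrically} irreducible, which does pass to products.
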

 
 \begin{proof} First, let us show that $X$ is irreducible The proof of this in the case $r=1$  (for an arbitrary field with $2\not=0$) 
 can be found in~\cite[Proposition~5-2.3]{BoijLaksov08iagmg}. We adopt the argument from~\cite{BoijLaksov08iagmg} to work for any $r\ge 1$. (Note that products need not preserve the irreducibility when the underlying field is not algebraically closed.)

% The set $X$ is easily seen to be closed in the Zariski topology as the product of closed sets. 

 For $\B x\in K^d$ with $\B x\cdot\B x:=\sum_{i=1}^d x_i^2$ non-zero, the map $\rho_{\B x}:K^d\to K^d$ that is defined by
 $$
 \rho_{\B x}(\B y):=\B y -2\,\frac{\B y\cdot \B x}{\B x\cdot\B x}\,\B x,\quad \mbox{for }\B y\in K^d,
 $$
 can be thought of as the reflection of $K^d$ around the hyperplane orthogonal to $\B x$, so we call $\rho_{\B x}$ a \emph{reflection}. 
 Each $\gamma\in\SO(d;K)$ can be written as a product of an even number of reflections, see
 \cite[Proposition~1-9.4]{BoijLaksov08iagmg} (and, conversely, every such product is in~$\SO(d;K)$). In fact, the proof in \cite{BoijLaksov08iagmg}, which proceeds by induction on $d$, shows that at most $m:=2d$ reflections are needed. 
 By inserting the trivial composition $\rho_{\B x} \rho_{\B x}=I_d$ for some $\B x\in K^d$ with $\B x\cdot\B x\not=0$ we can write each $\gamma\in\SO(d;K)$ as the product of exactly $m$ reflections. 
 
Let $U:=\{\B z\in K^{d}\mid \B z\cdot\B z\not=0\}$ and define $f:U^m\to\SO(d;K)$ by 
$$
f(\B z_1,\ldots,\B z_{m}):=\rho_{\B z_{1}}\ldots \rho_{\B z_{m}}\in\SO(d;K),\quad \mbox{for }(\B z_1,\ldots,\B z_{m})\in U^m.
$$
Consider the product map $f^r:(U^{m})^{r}\to \SO(d;K)^r$ that applies $f$ in each of the $r$ coordinates.
As the complement $V:=K^{dm}\setminus U^m$ is Zariski closed (as the finite union over $i\in [m]$ of the sets of $(\B z_1,\ldots,\B z_m)\in K^{dm}$ satisfying the polynomial equation $\B z_i\cdot\B z_i=0$), the complement $W:=K^{dmr}\setminus U^{mr}$ is also Zariski closed as the finite union over $i\in [r]$ of the closed sets $K^{dm(i-1)}\times V\times K^{dm(r-i)}$. Clearly,
$f^r$ is a rational map defined everywhere on $U^{mr}$ and thus continuous in the Zariski topology on $U^{mr}\subseteq K^{dmr}$.
% see https://math.stackexchange.com/questions/350152/the-continuity-of-the-rational-maps-in-the-zariski-topology
 Also, the image of $f^r$ is exactly $X=\SO(d;K)^r$ with the surjectivity following from the choice of~$m$. 
It follows
from \cite[Lemma 5-2.1]{BoijLaksov08iagmg}
that $X$ is irreducible. (In brief, if $X$ can be written as a union of two proper closed subsets $X_1\cup X_2$, then $K^{dmr}$ is a union of two proper closed sets $f^{-1}(X_1)\cup W$ and $f^{-1}(X_2)\cup W$, contradicting the irreducibility of $K^{dmr}$ since its ideal $I(K^{dmr})$, which is $\{0\}$ by e.g.\ Lemma~\ref{lm:1.4}, is trivially prime.) 
Thus $X$ is indeed irreducible.
 
It remains to show that the set of upper coordinates $\gamma_U$ (that is, all entries above the diagonals) is a transcendence basis for the function field $K(X)$ over $K$.
This claim is made of the following two parts.

First, let us show that the field extension $K(\gamma_U)\hookrightarrow K(X)$ is algebraic. By~\eqref{eq:16.3} and~\eqref{eq:16.4}, it is enough to represent this field extension as a composition of field extensions where, at each step, every added non-upper coordinate is algebraic over the previously added 
coordinates and the upper coordinates in the same matrix. Thus we consider just one matrix in $\SO(d;K)$, which we denote as $\gamma=(\gamma_{i,j})_{i,j\in [d]}$. We add the non-upper coordinates by whole rows in the natural order (with Row 1 added first, then Row 2, and so on).
 Take any Row $m$ and a non-upper pair $(m,j)$ (i.e.\ with $j\le m$). 
 The following argument works for every index $j\in [m]$ so we pick $j=m$ for notational convenience. Thus we have to show that $z:=\gamma_{m,m}$, as an element of $K(X)$, is algebraic over 
 $$
 K(\{\gamma_{i,j}: i\in [m-1],\ j\in [d]\}\cup \{\gamma_{m,j}\mid j\in \{m+1,\ldots,d\}\,\}).
 $$
 Let the vector
 $\B x:=(\gamma_{m,1},\ldots,\gamma_{m,m-1})$ consist of the other non-upper entries of Row~$m$ and let $M:=(\gamma_{i,j})_{i,j\in [m-1]}$ be the square submatrix of $\gamma$ which lies above~$\B x$. 
 The orthogonality of Row $m$ to the previous rows gives a  system of $m-1$ linear equations, namely,
 $$M\B x^T=\B f^T,
 $$
  where $\B f:=(f_1,\ldots,f_{m-1})$ with $f_i:=-\gamma_{i,m}z - \sum_{j=m+1}^d \gamma_{i,j}\gamma_{m,j}$ for $i\in[m-1]$.
 %(and we view $f$ and $\B x$ as column vectors). 
 %We see that $f$ is a linear function of $z$ (given the previous coordinates).
 By Cramer's rule, we have $\det(M)\B x^T=\Ad{M}\B f^T$, where $\Ad{M}$ denotes the \emph{adjoint matrix} of~$M$ (whose $(i,j)$-th entry is $(-1)^{i+j}$ times the determinant of $M$ with Row $j$ and Column~$i$ removed). Take the unit ``norm'' relation $\sum_{i=1}^d \gamma_{m,i}^2=1$ for Row $m$, multiply it by $(\det(M))^2$ and replace each $(\det(M))^2x_i^2$ by its value from Cramer's rule. We get a polynomial equation having no $\B x$, namely,
 \beq{eq:om}
 (\det(M))^2z^2+\sum_{i=1}^{m-1} \left(\sum_{j=1}^{m-1} \Ad{M}_{ij} f_j\right)^2+(\det(M))^2\sum_{i=m+1}^d \gamma_{m,i}^2=(\det(M))^2.
 \eeq
 Let us show that the coefficient at $z^2$ in this equation is non-zero. This coefficient is some polynomial in the upper entries and the previous entries. If we take the identity matrix $I_d$ for $\gamma$, then 
 the column above $z$ is all zero and the matrix $M$ is invertible (namely, it is the $(m-1)\times (m-1)$ identity matrix $I_{m-1}$). Then $\B f$ does not depend on $z$ at all and the coefficient at $z^2$ is $(\det(M))^2=1$, which is non-zero.  
So the coefficient at $z^2$ in~\eqref{eq:om} is a non-zero polynomial, that is, $z$ is algebraic over all previous entries, as desired. We conclude (by~\eqref{eq:16.3} and~\eqref{eq:16.4}) that all entries on or below the diagonals are algebraic over~$K(\gamma_U)$ and thus the field extension $K(\gamma_U)\hookrightarrow K(X)$ is indeed algebraic.

Thus in order to show that the coordinates $\gamma_U$ form a transcendence basis, it remains to prove that these ${d\choose 2}r$ coordinates, as elements of the function field $K(X)$, are algebraically independent over~$K$. It is enough to prove this for $K=\I C$. Indeed, we assumed that $K\subseteq \I C$. A non-trivial algebraic relation over $K$ between the upper coordinates means 
%by the Elimination Theorem (\ha{Theorem 4.8}) 
that the ideal that defines $\SO(d;K)^r$ (which, in the case $r=1$, is the ideal $I_{\SO}$ in~\eqref{eq:I})  contains a non-zero polynomial $g$ that does not depend on non-upper coordinates. The same polynomial $g$, when viewed as a polynomial in $\I C[\B \gamma]$, then witnesses that the upper coordinates are algebraically dependent over~$\I C$.

 Thus let us assume that $K=\I C$. We need an easy auxiliary claim first from which we will derive that every choice of sufficiently small in absolute value upper entries can be extended to a matrix in $\SO(d;\I C)$.
 For $m\in [d]$ and an $m\times d$ matrix $\gamma=(\gamma_{i,j})$, let the property $\C P_m$ state that
 for all $i\in [m]$ we have $\sum_{j=1}^d \gamma_{i,j}\gamma_{m,j}=\I 1_{i=m}$. (Recall that $\I 1_{i=m}$ is $1$ if $i=m$ and 0 otherwise.) In other words, $\C P_m$ states that Row $m$ has unit ``norm'' and is orthogonal to all previous rows.
 
 \begin{claim}\label{cl:e} For every $m\in [d]$ and $\delta>0$ there is $\e=\e_m(\delta)>0$ such that the following holds. Take any  complex numbers $(\gamma_{i,j})_{(i,j)\in S}$, where 
 	$$
 	S:=([m-1]\times [d])\cup \{(m,j)\mid m<j\le d\},
 	$$ 
 such that $\C P_1,\ldots,\C P_{m-1}$ hold and $|\gamma_{i,j}-\I 1_{i=j}|\le \e$ for any $(i,j)\in S$. Then there is a choice of $\gamma_{m,1},\ldots,\gamma_{m,m}\in\I C$ such that $|\gamma_{m,j}-\I 1_{m=j}|\le \delta$ for each $j\in [m]$ and $\C P_m$ holds. Moreover, if $\gamma_{i,j}$ for each $(i,j)\in S$ is real then $\gamma_{m,1},\ldots,\gamma_{m,m}$ can additionally be chosen to be real.
 \end{claim}

 \bcpf %Though an explicit $\e$ can be computed, for brevity we present a non-constructive proof.
 Suppose that the claim fails for for  some $m\in [d]$ and $\delta>0$. Let real $\e$ tend to $0$ from above and let 
 $\gamma\in \I C^S$ be a partial assignment violating the claim.
 Let us use the notation that was introduced around~\eqref{eq:om}. By our choice of $\gamma$, we have that
 each entry of $M$ is within additive $\e=o(1)$ from the corresponding entry of the identity matrix and thus $\det(M)=1+o(1)$ is non-zero.
 %, where the asymptotic notation hides terms then tend to 0 with $\e\to 0$. The quadratic equation~\eqref{eq:om} becomes $z^2=1+o(1)$. 
 Of the two roots of the quadratic equation~\eqref{eq:om}, which now reads $z^2-1=o(1+|z|^2)$, choose $z=1+o(1)$.
 In fact,~\eqref{eq:om} gives not only the entry $z=\gamma_{m,m}$ but the  consistent remainder of Row~$m$ by $\B x^T:=(\det (M))^{-1}\,\Ad{M}\B f^T$, satisfying~$\C P_m$. By the continuity of  the all involved functions (and $\det (M)=1+o(1)$), we have $\|\B x\|_\infty=o(1)$, a contradiction to $\delta>0$ being fixed.
 
 Let us show how to adapt this argument to establish the second part of the claim. Suppose additionally that the given $\gamma_{i,j}$'s are reals. 
 In the above notation, the quadratic equation~\eqref{eq:om} has all real coefficients and, as before, states that $z^2-1=o(1+|z^2|)$. Its left-hand side as a function of $z\in\I R$ changes sign at $z=1$ with its derivative~$2z$ being bounded away from 0 around $z=1$. Hence we can choose a real root $z=1+o(1)$. Then $M$ is a real matrix and the rest of Row~$m$, namely $\B x^T:=(\det (M))^{-1}\,\Ad{M}\B f^T$ is also real.\ecpf

 Consider the projection $\pi:\SO(d;\I C)^r\to \I C^{m}$ on the $m:={d\choose 2}r$ upper coordinates, which maps $(\B x,\B y)$ to~$\B y$. In particular, the $r$-tuple of the identity matrices projects to the zero vector $\Zero\in \I C^m$. 
 The image of $\pi$ contains some Euclidean open ball 
 $$\mathrm{Ball}_\e(\Zero):= \{\B z\in \I C^m\mid\|\B z\|_1<\e\}$$ of radius $\e>0$ around the origin. Namely, we can take its radius to be 
 \begin{equation}\label{eq:Epsilon}
 \e:=\e_d(\e_{d-1}(\ldots\e_1(1/(2^d\,d!))\ldots))>0,
 \end{equation}
  where $\e_1,\ldots,\e_d$ are the functions returned
 by Claim~\ref{cl:e}. Indeed, by the choice of the constants we know that for every $\B y\in \mathrm{Ball}_\e(\Zero)$, we can construct a $d\times d$ matrix $\gamma$ row by row so that $\gamma$ projects to $\B y$ and satisfies all properties~$\C P_1,\ldots,\C P_d$ while it also holds that $\|\gamma-I_d\|_\infty<1/(2^d\,d!)$. The last inequality gives, rather roughly, that
 $|\det(\gamma)-1|< 1$. Thus $\det(\gamma)=1$ because  $\det(\gamma)$ is either $-1$ or $1$ by~\eqref{eq:det=pm1}.
 So indeed $\pi(\SO(d;\I C))$ contains~$\pi(\gamma)=\B y$.

Now, suppose on the contrary that there is a non-trivial polynomial relation bet\-ween the upper coordinates.
%By the Elimination Theorem (\ha{Theorem 4.8}),
Thus there is a non-zero polynomial $g$ which does not depend on the non-upper coordinates and belongs to the ideal generated by the polynomials that define $\SO(d;\I C)^r$ (with those for $r=1$ being listed in~\eqref{eq:I}). The polynomial $g$, as a function of the $m$ upper coordinates,  vanishes on $\pi(X)\subseteq \I C^m$. This contradicts Lemma~\ref{lm:1.4} as $\pi(X)$ contains a non-empty open set, namely the open ball of radius $\e$ around the origin, and thus $\pi(X)$ contains a product of $m$ infinite sets.\end{proof}
 
 Now we are ready to show that the set $\Ng$ of non-generic points in $\SO(d)^r$ is ``small''.

\begin{proof}[Proof of Lemma~\ref{lm:GNew}.] 
As before, when we identify an $r$-tuple of $d\times d$ matrices over a field $K$ with an element of $K^{d^2r}$, let us order the $d^2r$ coordinates so that the $m:={d\choose 2}r$ upper entries (i.e.\ those above the diagonals) come at the end. Thus if we write an element of $K^{d^2r}$ as $(\B x,\B y)$ then $\B y$ corresponds to the $m$ upper entries.
Also, we use the standard topology on $\I S^{d-1}$ (the one which is inherited from the Euclidean space~$\I R^d$). 

There are countably many polynomials in $\I Q[\B x,\B y]$ so enumerate those that are non-zero on at least one element of $\SO(d)^r$ as $f_1,f_2,\ldots$~. By definition, if a point $(\B a,\B b)\in \SO(d;\I R)^r$ is not generic then some $f_i$ vanishes on~$(\B a,\B b)$. Thus $\Ng$ is a subset of the countable union $\cup_{i=1}^\infty Z_i$, where 
 \beq{eq:Zi}
 Z_i:=
 \{(\B a,\B b)\in\SO(d)^r\mid f_i(\B a,\B b)=0\}.
 \eeq
% Since the map which sends $(\B x,\B y)$ to $f_i(\B y)$ is 
 %polynomial and 
 Since each polynomial $f_i$ is continuous as a function $\I R^{d^2r}\to\I R$, each set $Z_i$ is closed.

 Let us turn to Part~\ref{it:Gmeas} where we have to show that the Haar measure $\nu$ assigns measure 0 to $\Ng$. 
 By the countable additivity,
 it is enough to show that each set $Z_i$, defined by~\eqref{eq:Zi}, has $\nu$-measure zero.

 First, let us recall how the Haar measure can be constructed for the group $\Gamma:=\SO(n)^r$ (and, in fact, for any real Lie group), following the presentation in \kn{Sections VIII.1--2}. Namely, choose some linear basis for the Lie algebra $(\mathfrak{so}(d))^r$ viewed as the tangent space $T_{(I_d,\ldots,I_d)}$ at the identity $(I_d,\ldots,I_d)\in\SO(d)^r$ and, using the translations of these vectors, turn them into left-invariant vector fields
 $X_1,\ldots,X_m$. (Note the the Lie algebra $(\mathfrak{so}(d))^r$, that consists of all $r$-tuples of skew-symmetric matrices, has dimension $m={d\choose 2}r$ as a vector space.) For each $\B\gamma\in \Gamma$, let $e_1(\B\gamma),\ldots,e_m(\B\gamma)\in T_{\B \gamma}^*$ be the dual basis to $(X_1(\B\gamma),\ldots,X_m(\B\gamma))$. Then $\omega=e_1\wedge\ldots\wedge e_m$ (the skew-symmetric product) is a smooth $m$ form on $\Gamma$, which is positive and left-invariant and thus
 defines a Borel left-invariant non-zero measure on $\Gamma$ (\kn{Theorem 8.21}). By the uniqueness, this has to be a multiple of the Haar measure~$\nu$. In particular, any smooth submanifold of $\Gamma$ of dimension (as a manifold) less than $m$ has zero Haar measure (\kn{Equation~(8.25)}). 
 
 The set $Z_i\subsetneq \SO(d)^r$, as an algebraic variety, has dimension smaller than~$m$ which follows from the definition of the dimension via nested chains of irreducible varieties (that is, by~\eqref{eq:TopDim}) and from the irreducibility of the variety $\SO(d)^r$ (that is, by Lemma~\ref{lm:upper}). Some standard results in the theory of \mbox{(semi-)}\-algebraic sets give 
 that every bounded variety in some $\I R^n$ admits a triangulation into 
 simplices  each of which is a smooth submanifold of $\I R^n$, see e.g.\ \cite[Theorem 5.43]{BasuPollackRoy06arag}. 
Apply this result to every irreducible component $Z\subseteq Z_i$.
The dimension $k$ of each obtained simplex $S$ (as a manifold) is at most~$\dim Z$. Indeed, pick a point $\B s\in S$ and the projection from $S$ on some $k$ coordinates which is a homeomorphism around $\B s$.  Observe that these $k$ coordinates are algebraically independent in the function field $\I R(Z)$  because no non-zero polynomial on $\I R^k$ can vanish on a non-empty open set by Lemma~\ref{lm:1.4}. 

Thus we covered $Z_i$ by finitely many manifolds of dimension less than $m$, each having zero Haar measure as it was observed earlier (by~\kn{Equation~(8.25)}). We conclude that the Haar measure of $Z_i$ is indeed zero.

Let us show Part~\ref{it:GTop}. Recall that the sets $Z_1,Z_2,\ldots$ were defined in~\eqref{eq:Zi}. Clearly, each set $Z_i$ is closed. Thus it is enough to show that the relative interior of each $Z_i\subseteq \SO(d)^r$ is empty. Suppose on the contrary that the relative interior $U$ of some $Z_i$ is non-empty. Since the compact group $\SO(d)^r$ acts transitively on itself by homeomorphisms, finitely many translates of $U$ cover the whole group. As the Haar measure is $\nu$ is invariant under this action, we have that $\nu(U)>0$. However, this contradicts Part~\ref{it:Gmeas} that we have already proved.
 
 This finishes the proof of Lemma~\ref{lm:GNew}.\end{proof}

% \section{Generic points}

\subsection{Proof of Lemma~\ref{lm:GPChar}}
\label{se:aux}

%Recall that Lemma~\ref{lm:G} claims that the set $\Ng$ of non-generic points of $\SO(d)^r$ is ``small'' in various senses. 
Our proof of the reverse (harder) implication of Lemma~\ref{lm:GPChar}
%its Item~\ref{it:GAlg} 
needs  Lemma~\ref{lm:suff} below. Since we could not find this rather natural statement anywhere in the literature we present a proof whose main idea (to use dimension) was suggested to us by Miles Reid. In fact, Miles Reid came up with a full proof of some initial version of the lemma. Since his proof relies on the so-called \emph{universal domain} of $K$ while we would like to have this paper as elementary as possible, we present a proof that avoids universal domains.

Given a field extension $K\hookrightarrow L$ and a variety $X\subseteq L^n$ (over the field $L$), we say that an element $\B a\in X$ is \emph{$K$-generic} for $X$ if every polynomial $p\in K[x_1,\ldots,x_n]$ with $p(\B a)=0$  vanishes on every element of~$X$.
(Here as well as in the rest of this paper, each evaluation mixing elements of some two fields $K\hookrightarrow L$ is done in the larger field~$L$.)
In the special case when $K:=\I Q$, $L:=\I R$, $X:=\SO(d)^r$ we get exactly the definition of a generic $r$-tuple of rotations from the Introduction.

 \begin{lemma}\label{lm:suff}
	Let $K\hookrightarrow L$ be a field extension, with $L$ being algebraically closed.
	% of characteristic 0. 
	Let $\C P\subseteq K[\B x,\B y]$ be some family of polynomials over $K$, where we abbreviate $\B x:=(x_1,\ldots,x_m)$ and $\B y:=(y_1,\ldots,y_n)$. Suppose that
	\beq{eq:X}
	X:=
	%V_L(f_1,\ldots,f_r):=
	\{(\B x,\B y)\in L^{m+n}\mid \forall\ f\in \C P\ f(\B x,\B y)=0\},
	\eeq
	as a variety over $L$,  is irreducible and has dimension $n$ with $y_{1},\ldots,y_n$ forming a transcendence basis for the function field $L(X)$ over $L$.

	Then every $p=(\B a,\B b)\in X$ with the $n$-tuple $\B b\in L^n$ being algebraically independent over $K$ is a {$K$-generic point} of $X$. 
	%(that is, every polynomial $f\in K[\B x,\B y]$ with $f(p)=0$ vanishes on the whole set $X$).
\end{lemma}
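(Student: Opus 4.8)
The plan is to recast the conclusion as the injectivity of a surjection between two finitely generated $K$-algebras and then force that injectivity by comparing Krull dimensions. Set $J:=I(X)\cap K[\B x,\B y]$, where $I(X)\subseteq L[\B x,\B y]$ is the ideal of $X$; since $X$ is irreducible, $I(X)$ is prime, hence so is its contraction $J$, and $A:=K[\B x,\B y]/J$ is a finitely generated domain over $K$. Let $\phi\colon K[\B x,\B y]\to L$ be the $K$-algebra homomorphism with $\phi(x_i)=a_i$ and $\phi(y_j)=b_j$; its image is the finitely generated domain $B:=K[\B a,\B b]\subseteq L$. As $(\B a,\B b)\in X$, every element of $J$ vanishes at $(\B a,\B b)$, so $J\subseteq\ker\phi$ and $\phi$ factors through a surjection $A\twoheadrightarrow B$. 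Unwinding the definition of $K$-generic, the claim is precisely that $\ker\phi\subseteq I(X)$, i.e.\ $\ker\phi=J$, i.e.\ that this surjection is an isomorphism; so it suffices to prove that it is injective.

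The dimension bookkeeping is then short. Because $\B b$ is algebraically independent over $K$ we get $\dim B=\operatorname{trdeg}_K K(\B a,\B b)\ge\operatorname{trdeg}_K K(\B b)=n$. Granting for the moment that $\dim A=n$, put $\mathfrak p:=\ker(A\twoheadrightarrow B)$, which is a prime of $A$ since $B$ is a domain, and apply the dimension formula for finitely generated domains over a field: $\dim A=\dim(A/\mathfrak p)+\operatorname{ht}\mathfrak p=\dim B+\operatorname{ht}\mathfrak p$, whence $\operatorname{ht}\mathfrak p=\dim A-\dim B\le n-n=0$, so $\mathfrak p=(0)$ and $A\cong B$, as wanted.

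Everything therefore reduces to the claim $\dim A=n$, which I expect to be the main obstacle: it is the one point at which one must use that $X$ is cut out by the $K$-polynomials $\C P$, not merely that it is an $L$-variety. Since $A$ is a domain it is enough to show $\operatorname{trdeg}_K\operatorname{Frac}(A)=n$, where $\operatorname{Frac}(A)=K(\bar x_1,\dots,\bar x_m,\bar y_1,\dots,\bar y_n)$. The images $\bar y_1,\dots,\bar y_n$ are algebraically independent over $K$ — a $K$-relation among them would lie in $J\subseteq I(X)$, but $I(X)\cap L[\B y]=0$ because $\B y$ is a transcendence basis for $L(X)$ over $L$ — so $\operatorname{trdeg}_K\operatorname{Frac}(A)\ge n$, and for the reverse inequality I will show that each $\bar x_i$ is algebraic over $K(\bar y_1,\dots,\bar y_n)$. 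Fix $i$. Since $X$ is irreducible, $L[X]$ is a domain embedding in $L(X)$, which has transcendence degree $n$ over $L$; hence the $n+1$ functions $\bar y_1,\dots,\bar y_n,\bar x_i$ are algebraically dependent over $L$, so $I(X)\cap L[\B y,x_i]\ne 0$. Taking radicals and using that $L$ is algebraically closed, so $I(X)=\sqrt{(\C P)\,L[\B x,\B y]}$, this forces the elimination ideal $(\C P)\,L[\B x,\B y]\cap L[\B y,x_i]$ to be nonzero as well. But this elimination ideal is generated by polynomials with coefficients in $K$: compute a Gr\"obner basis of $(\C P)$ over $K$ for an elimination monomial order; Buchberger's algorithm uses only the field operations of $K$, so the same set is a Gr\"obner basis over $L$ and its members lying in $L[\B y,x_i]$ generate the elimination ideal. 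Thus there is a nonzero $g_i\in K[\B y,x_i]\cap(\C P)\,K[\B x,\B y]\subseteq I(X)\cap K[\B x,\B y]=J$; moreover $g_i\notin K[\B y]$ (else $g_i\in I(X)\cap L[\B y]=0$), so, read as a polynomial in $x_i$ over $K(\B y)$, $g_i$ has positive degree and exhibits $\bar x_i$ as algebraic over $K(\bar y_1,\dots,\bar y_n)$. The remaining ingredients — primeness of a contracted ideal, the dimension formula for finitely generated domains, and the Gr\"obner-basis descent — are standard, so the argument should go through; the genuine subtlety is precisely that the relevant elimination ideal descends to $K$, which is the crux of the proof.
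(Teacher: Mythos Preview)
Your argument is correct, and it shares the paper's two key ingredients---a dimension comparison and a Gr\"obner-basis descent from $L$ to $K$---but organises them differently. The paper works geometrically: it sets $Z:=V_L(I_p)$ where $I_p\subseteq K[\B x,\B y]$ is the ideal of the point, notes $Z\subseteq X$, and shows $Z=X$ by contradiction. If no irreducible component of $Z$ has the $\B y$-coordinates algebraically independent in its function field, then each component contributes a nontrivial $\B y$-relation, and the Nullstellensatz produces a nonzero element of $I_p^L\cap L[\B y]$; Gr\"obner descent (applied to $I_p$) then yields a nonzero element of $I_p\cap K[\B y]$, contradicting the algebraic independence of~$\B b$. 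You instead work ring-theoretically with the surjection $A=K[\B x,\B y]/J\twoheadrightarrow B=K[\B a,\B b]$, bound $\dim B\ge n$ directly, and pin down $\dim A=n$ by applying Gr\"obner descent to the ideal~$(\C P)$ (rather than to $I_p$) to force each $\bar x_i$ algebraic over $K(\bar{\B y})$; the catenary dimension formula then kills the kernel. Your route avoids the paper's case split on the components of $Z$ and is a bit more streamlined; the paper's route keeps everything on the variety side and applies the descent to a single ideal rather than once per coordinate~$x_i$. Both are standard once one sees that the crux is the descent of the relevant elimination ideal to~$K$.
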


\begin{proof}
Let the ideal $I_p\subseteq K[\B x,\B y]$ consist of those polynomials over $K$ that vanish on~$p$. Let
$$
Z:=V_L(I_p)=\{(\B x,\B y)\in L^{m+n}\mid \forall\, f\in I_p\ f(\B x,\B y)=0\}.
$$
As $\C P\subseteq I_p$, we trivially have that $Z\subseteq X$. We have to show that $Z=X$, which by the definition of $Z=V_L(I_p)$ will give the required result (namely, that every $f\in I_p$ vanishes on $X$).

Let $Z=Z_1\cup\ldots\cup Z_t$ be a decomposition of $Z$ into irreducible varieties (\ha{Theorem 6.4}). 

Suppose first that there is $i\in [t]$ such that the $n$-tuple $\B y$, with each $y_j$ viewed as an element of the function field $L(Z_i)$,  is algebraically independent over~$L$. This means that the dimension of the irreducible variety $Z_i\subseteq L^{m+n}$ is at least $n$. Recall that $Z_i\subseteq Z\subseteq X$. By the definition of the dimension via nested chains of irreducible subvarieties (that is, by~\eqref{eq:TopDim}), we cannot have $Z_i\subsetneq X$ for otherwise any chain for $Z_i$ extends to a strictly larger chain for $X$ which gives that
$\dim X-1\ge \dim Z_i\ge n$, contradicting our assumption.
Thus $Z_i=Z=X$, as desired.

Thus we can assume that for every $i\in [t]$ there is a non-zero $g_i\in L[\B y]\cap I(Z_i)$. 
Since $Z=\cup_{i=1}^t Z_i$, we have by \ha{Proposition~3.12} that $I(Z)=\cap_{i=1}^t I(Z_i)$.
(Recall that, for example, the ideal $I(Z)$ of $Z\subseteq L^{m+n}$ consists of those $p\in L[\B x,\B y]$ that vanish on~$Z$.) 
Thus the product $g_1\ldots g_t\in L[\B y]$, which trivially belongs to each $I(Z_i)$, also belongs to~$I(Z)$.

Let $I_{p}^L$ be the ideal in $L[\B x,\B y]$ generated by $I_p\subseteq K[\B x,\B y]\subseteq L[\B x,\B y]$.  In other words, 
$$
 I_p^L:=\left\{\sum_{i=1}^m h_i(\B x,\B y)f_i(\B x,\B y)\mid m\ge 0,\ h_1,\ldots,h_m\in L[\B x,\B y],\ f_1,\ldots,f_m\in I_p\right\},
 $$
  from which it easily follows that $V_L(I_p^L)=V_L(I_p)=Z$.
Since $L$ is algebraically closed, we have by Hilbert's Nullstellensatz~(\ha{Theorem 7.3}) that $I(Z)$ is equal to 
$$
\sqrt{I_p^L}:=\{f\in L[\B x,\B y]\mid \exists\, N\ f^N\in I_p^L\},
$$ 
the \emph{radical} of~$I_p^L$.
Thus there is some integer $N\ge 1$ such that $g:=(g_1\ldots g_t)^N$ belongs to~$I_p^L$. 

In other words, we have shown that $I_p^L$ contains a non-zero polynomial $g$ that does not depend  on $\B x$, that is,
\beq{eq:IpLy}
I_p^L\cap L[\B y]\not=\{0\}.
\eeq
We claim that, in fact,  $I_p\cap K[\B y]\not=\{0\}$. In order to show this, we analyse how a known algorithm for eliminating variables works, arguing that we can run two instances of the algorithm, one for $I_p^L\cap L[\B y]$ and the other for $I_p\cap K[\B y]$, to produce the same generating set of polynomials in each case.

Since all following steps are fairly standard, we will be rather brief, referring the reader to \cite{Hassett07itag} for a detailed exposition. First, by the Hilbert Basis Theorem (\ha{Corollary 2.22}), there is a finite set $\C F\subseteq K[\B x,\B y]$ that generates $I_p$. Of course, the same set $\C F$, as a subset of $L[\B x,\B y]$, generates $I_p^L$. 
We fix any monomial order $\prec$ for $(\B x,\B y)$ which is an elimination order for $\B x$ (\ha{Definition 4.6}) and apply Buchberger's algorithm (\ha{Corollary 2.29}) to find a $\prec$-Gr\"obner basis $\C G$ for~$I_p^L$ using $\C F$ as its input. At a very low level, each step of the algorithm is to pick some two previous non-zero polynomials $h_1$ and $h_2$, take the coefficients $c_1$ and $c_2$ at their $\prec$-highest monomials and add $h_1-(c_1/c_2)h h_2$ for some monomial $h$ to the current pool of polynomials.
Thus all encountered polynomials have coefficients in $K$; in particular,
the obtained Gr\"obner basis $\C G$ is a subset of $K[\B x,\B y]$. By 
the Elimination Theorem (\ha{Theorem 4.8}) and 
our choice of the monomial order~$\prec$, 
%(and the correctness of Buchberger's algorithm), 
the ideal $I_p^L\cap L[\B y]$ is generated by $\C G\cap L[\B y]$, that is, by those polynomials in $\C G$ that
do not depend on $\B x$. 
%By what we previously observed, $\C G\cap L[\B y]=\C G\cap K[\B y]$.
Moreover,  if we apply Buchberger's algorithm to find the intersection of $I_p=\langle \C F\rangle\subseteq K[\B x,\B y]$ and $K[\B y]$, we obtain the very same generating set $\C G\cap K[\B y]$
(because the choice of $h_1$, $h_2$ and $h$ at each low-level step of the algorithm depends only on the $\prec$-highest monomials of the previous polynomials).  

However, we know that $I_p\cap K[\B y]=\{0\}$ because no non-zero polynomial in $K[\B y]$ can vanish on $p$ by our assumption that $\B y$ is algebraically independent over $K$. Thus $\C G\cap L[\B y]=\C G\cap K[\B y]$ can contain only the zero polynomial. This means that $I_p^L\cap L[\B y]=\{0\}$, contradicting~\eqref{eq:IpLy} and proving the lemma.\end{proof}

Now we are ready to prove  Lemma~\ref{lm:GPChar} that gives an alternative characterisation of $\I Q$-generic points of $\SO(d)^r$.
% where we have to show that the set of non-generic points of $\SO(d)^r$ is ``small'' in various senses.

\begin{proof}[Proof of Lemma~\ref{lm:GPChar}.] As before, the $m:={d\choose 2}r$ upper entries of $\SO(d)^r\subseteq \I R^{d^2r}$ come at the end and if we write an element of $K^{d^2r}$ as $(\B x,\B y)$ then $\B y$ corresponds to the $m$ upper entries.
% also, let $\pi$ be the projection $(\B x,\B y)\mapsto \B y$.

The forward implication of the lemma is easy.
Take any $(\B a,\B b)\in SO(d;\I R)^r$ such that $f(\B b)=0$ for some non-zero polynomial $f$ with rational coefficients. Take any vector $\B b'\in \I R^m$ whose $L^\infty$-norm is at most the expression in~\eqref{eq:Epsilon} with entries algebraically independent over~$\I Q$. By Claim~\ref{cl:e}, there is a choice of a real vector $\B a'$ with $(\B a',\B b')\in\SO(d)^r$, that is, we can extent the vector $\B b'$ of upper entries to an $r$-tuple of real special orthogonal matrices. Since the polynomial $f$ with rational coefficients cannot vanish on $\B b'$, the polynomial map $(\B x,\B y)\mapsto f(\B y)$ shows that $(\B a,\B b)$ is not a generic point.

%Let us show Part~\ref{it:GAlg}. 
Let us show the converse implication.
% of Lemma~\ref{lm:GPChar}. 
Let $(\B a,\B b)\in SO(d;\I R)^r$ be any point with the $m$-tuple $\B b\in\I R^m$ of reals being algebraically independent over~$\I Q$.

By Lemma~\ref{lm:upper}, the complex variety $X:=\SO(d;\I C)^r\subseteq \I C^{d^2r}$ is irreducible and the upper coordinates $\B y$ form a transcendence basis for the function field $\I C(X)$. Now, Lemma~\ref{lm:suff} (which requires that the field $L$ is algebraically closed) applies with $K:=\I Q$, $L:=\I C$ and $\C P\subseteq \I Q[\B x,\B y]$ consisting of the polynomials that define the variety $\SO(d;\I R)^r$ (with the ones in~\eqref{eq:I} corresponding to the case $r=1$). The lemma gives that $(\B a,\B b)\in\SO(d;\I R)^r\subseteq \SO(d;\I C)^r$ is a $\I Q$-generic point of~$\SO(d;\I C)^r$. Of course, this trivially implies that $(\B a,\B b)$ is a $\I Q$-generic point also of $\SO(d;\I R)^r$ (as every polynomial $p\in \I Q[\B x,\B y]$ that vanishes on $(\B a,\B b)$ has to vanish on $\SO(d;\I C)^r\supseteq \SO(d;\I R)^r$), as desired.

This finishes the proof of Lemma~\ref{lm:GPChar}.\end{proof}

%------
% Insert acknowledgments and information
% regarding funding at the end of the last
% section, i.e., right before the bibliography.
%------

\begin{ack}
The authors would like to thank Miles Reid for the very useful discussions and for sharing his proof of some initial version of Lemma~\ref{lm:suff}. Also, the authors thank the anonymous referee for helpful comments.
\end{ack}

\begin{funding}
Clinton T.\ Conley was
supported by NSF Grant DMS-1855579. Jan Greb\'\i k was supported by 
Leverhulme Research Project Grant RPG-2018-424. Oleg Pikhurko was supported by ERC Advanced Grant 101020255 and
Leverhulme Research Project Grant RPG-2018-424.
\end{funding}

%------
% Insert the bibliography.
%------

%\begin{thebibliography}{99}

%------ Example for a paper in journal:
% \bibitem{article1}
% \textsc{A.~Petrunin}, Parallel transportation for Alexandrov space with curvature bounded below.
% \emph{Geom. Funct. Anal.} \textbf{8} (1998), no.~1, 123--148.
% \Zbl{0903.53045} \MR{1601854}

%------ Example for a book:
% \bibitem{book1}
% \textsc{W.~P. Ziemer}, \emph{Weakly differentiable functions}.
% Grad. Texts in Math. 120, Springer, New York, 1989.
% \Zbl{0692.46022} \MR{1014685}

%------ Example for a paper in a book:
% \bibitem{incollection1}
% \textsc{J.~S. Milne}, Introduction to Shimura varieties.
% In \emph{Harmonic analysis, the trace formula, and Shimura varieties},
% edited by M.~W. Marcellin and E.~Giorgi, pp. 265--378, Clay Math. Proc. 4,
% Amer. Math. Soc., Providence, RI, 2005. \Zbl{1148.14011} \MR{2192012}

%------ Example for a preprint on arXiv:
% \bibitem{preprint1}
% \textsc{D.~V. Nguyen, S.~K. Chilappagari, M.~W. Marcellin \textup{and} B.~Vasic},
% LDPC codes from latin squares free of small trapping sets. 2010, \arxiv{1008.4177}.

%------ Example for a report:
% \bibitem{report1}
% \textsc{J.~Schöberl}, Commuting quasi-interpolation operators.
% Technical report isc-01-10-math, Texas A\&M University, 2001,
% \url{www.isc.tamu.edu/publications-reports/tr/0110.pdf}.

%------ Example for a thesis:
% \bibitem{thesis1}
% \textsc{E.~Giorgi}, \emph{The geometric universe}.
% Ph.D. thesis, University of Maryland, College Park, 2002.

%\end{thebibliography}

%\bibliography{oleg,sets,misc,ramsey,enum,number,posets,sat,ex,matroid,design,random,graph,general,geometry,algorithm,Analysis,limits}

\end{document}